\numberwithin{equation}{section}
\newtheoremstyle{plain2}{\topsep}{2\topsep}{\itshape}
{0pt}{\bfseries}{.}{.5em}{}
\newtheoremstyle{definition2}{\topsep}{2\topsep}{}
{0pt}{\bfseries}{.}{.5em}{}
\theoremstyle{plain2}
\newtheorem{theorem}{Theorem}[section]
\newtheorem{proposition}[theorem]{Proposition}
\newtheorem{lemma}[theorem]{Lemma}
\newtheorem{corollary}[theorem]{Corollary}
\theoremstyle{definition2}
\newtheorem{remark}[theorem]{Remark}
\renewcommand{\cite}{\citet}
\def\^#1{\ifmmode {\mathaccent"705E #1} \else {\accent94 #1} \fi}
\def\~#1{\ifmmode {\mathaccent"707E #1} \else {\accent"7E #1} \fi}
\def\*#1{#1^\ast}
\edef\-#1{\noexpand\ifmmode {\noexpand\bar{#1}} \noexpand\else \-#1\noexpand\fi}
\def\>#1{\vec{#1}}
\def\.#1{\dot{#1}}
\def\atop{\@@atop}
\def\%#1{\mathcal{#1}}
\let\original@left\left
\let\original@right\right
\renewcommand{\left}{\mathopen{}\mathclose\bgroup\original@left}
\renewcommand{\right}{\aftergroup\egroup\original@right}
\renewcommand{\phi}{\varphi}
\newcommand{\eps}{\varepsilon}
\newcommand{\D}{\Delta}
\newcommand{\eq}{\eqref}
\newcommand{\dtv}{\mathop{d_{\mathrm{TV}}}\mathopen{}}
\newcommand{\dloc}[1][]{\mathop{d_{\mathrm{loc}}^{\,#1}}\mathopen{}}
\newcommand{\bigo}{\mathop{\mathrm{{}O}}\mathopen{}}
\def\ER{Erd\H{o}s--R\'enyi}
\newcommand{\I}{\mathop{{}\mathrm{I}}}
\newcommand{\Po}{\PIP}
\newcommand{\TP}{\mathop{\mathrm{TP}}}
\newcommand{\Bi}{\mathop{\mathrm{Bi}}}
\newcommand{\Be}{\mathop{\mathrm{Be}}}
\newcommand{\N}{\mathrm{N}}
\newcommand{\IE}{\mathop{{}\mathbb{E}}\mathopen{}}
\newcommand{\IP}{\mathop{{}\mathbb{P}}\mathopen{}}
\newcommand{\Var}{\mathop{\mathrm{Var}}}
\newcommand{\Cov}{\mathop{\mathrm{Cov}}}
\newcommand{\law}{\mathscr{L}}
\newcommand{\IZ}{\mathbb{Z}}
\newcommand{\IR}{\mathbb{R}}
\def\be#1{\begin{equation*}#1\end{equation*}}
\def\ben#1{\begin{equation}#1\end{equation}}
\def\bes#1{\begin{equation*}\begin{split}#1\end{split}\end{equation*}}
\def\besn#1{\begin{equation}\begin{split}#1\end{split}\end{equation}}
\def\bg#1{\begin{gather*}#1\end{gather*}}
\def\bgn#1{\begin{gather}#1\end{gather}}
\def\ba#1{\begin{align*}#1\end{align*}}
\def\ban#1{\begin{align}#1\end{align}}
\def\given{\mskip 0.5mu plus 0.25mu\vert\mskip 0.5mu plus 0.15mu}
\newcounter{@bracketlevel}
\def\@bracketfactory#1#2#3#4#5#6{
\expandafter\def\csname#1\endcsname##1{%
\addtocounter{@bracketlevel}{1}%
\global\expandafter\let\csname @middummy\alph{@bracketlevel}\endcsname\given%
\global\def\given{\mskip#5\csname#4\endcsname\vert\mskip#6}\csname#4l\endcsname#2##1\csname#4r\endcsname#3%
\global\expandafter\let\expandafter\given\csname @middummy\alph{@bracketlevel}\endcsname
\addtocounter{@bracketlevel}{-1}}%
}
\def\bracketfactory#1#2#3{%
\@bracketfactory{#1}{#2}{#3}{relax}{0.5mu plus 0.25mu}{0.5mu plus 0.15mu}
\@bracketfactory{b#1}{#2}{#3}{big}{1mu plus 0.25mu minus 0.25mu}{0.6mu plus 0.15mu minus 0.15mu}
\@bracketfactory{bb#1}{#2}{#3}{Big}{2.4mu plus 0.8mu minus 0.8mu}{1.8mu plus 0.6mu minus 0.6mu}
\@bracketfactory{bbb#1}{#2}{#3}{bigg}{3.2mu plus 1mu minus 1mu}{2.4mu plus 0.75mu minus 0.75mu}
\@bracketfactory{bbbb#1}{#2}{#3}{Bigg}{4mu plus 1mu minus 1mu}{3mu plus 0.75mu minus 0.75mu}
}
\newcommand{\displayrelskip}{\mkern6mu}
\def\leq{\mathchoice%
{\mathrel{\mkern6mu\mathchar"0436\mkern6mu}}%
{\mathchar"3436}{\mathchar"3436}{\mathchar"3436}}
\let\le\leq
\def\geq{\mathchoice%
{\mathrel{\mkern6mu\mathchar"043E\mkern6mu}}%
{\mathchar"343E}{\mathchar"343E}{\mathchar"343E}}
\let\ge\geq
\def\Def{\mathchoice%
{\mathrel{\mkern4mu\vcentcolon\mathrel{\mathchar61}\mkern8mu}}%
{\mathrel{\vcentcolon\mathrel{\mathchar61}}}%
{\mathrel{\vcentcolon\mathrel{\mathchar61}}}%
{\mathrel{\vcentcolon\mathrel{\mathchar61}}}}
\newcommand{\ts}{\textstyle}
\def\now{%
\minute=\time%
\hour=\time \divide \hour by 60%
\hourMins=\hour \multiply\hourMins by 60%
\advance\minute by -\hourMins%
\zeroPadTwo{\the\hour}:\zeroPadTwo{\the\minute}%
}
\def\zeroPadTwo#1{\ifnum #1<10 0\fi#1}
\renewcommand\section{\@startsection {section}{1}{\z@}%
{-3.5ex \@plus -1ex \@minus -.2ex}%
{1.3ex \@plus.2ex}
{\center\small\sc\mathversion{bold}\MakeTextUppercase}}
\def\subsection#1{\@startsection {subsection}{2}{0pt}%
{-3.5ex \@plus -1ex \@minus -.2ex}%
{1ex \@plus.2ex}%
{\bf\mathversion{bold}}{#1}}
\def\subsubsection#1{\@startsection{subsubsection}{3}{0pt}%
{\medskipamount}%
{-10pt}%
{\normalsize\itshape}{\kern-2.2ex. #1.}}
\def\blfootnote{\xdef\@thefnmark{}\@footnotetext}
\def\sp#1{^{(#1)}}
\newcommand{\PIP}{\mathcal{P}}
\newcommand\cG{\mathcal{G}}
\newcommand\cN{\mathcal{N}}
\def\giv{\,|\,}
\def\a{\alpha}
\def\b{\beta}
\def\s{\sigma}
\def\d{\delta}
\def\g{\gamma}
\def\k{\kappa}
\def\la{\lambda}
\def\law{{\mathcal L}}
\def\cV{{\mathcal V}}
\def\cE{{\mathcal E}}
\def\ignore#1{}
\def\tW{{\widetilde W}}
\def\hn{{\hat n}}
\def\hW{{\widehat W}}
\def\hcV{{\widehat {\cV}}}
\def\tE{{\widetilde E}}
\def\tcG{{\widetilde {\cG}}}
\def\ppi{\rho}
\def\un{^{(n)}}
\def\cI{{\mathcal I}}
\def\cJ{{\mathcal J}}
\def\ha{{\hat a}}
\def\half{\tfrac12}
\def\quarter{\tfrac14}
\def\bF{{\overline F}}
\def\h{\eta}
\def\ess{\g}
\def\U{\Upsilon}
\def\tG{{\widetilde G}}
\def\tR{{\widetilde R}}
\def\l{\lambda}
\begin{document}

\title{\sc\bf\large\MakeUppercase{Error bounds in local limit theorems using Stein's method}}
\author{\sc A.\ D.\ Barbour, Adrian R\"ollin and Nathan Ross}
\date{\it Universit\"at Z\"urich, National University of Singapore and University~of~Melbourne}
\maketitle

\begin{abstract} 
We provide a general result for bounding the difference between 
point probabilities of integer supported distributions and the translated Poisson distribution, 
a convenient alternative to the discretized normal. 
We illustrate our theorem in the context of the Hoeffding combinatorial central limit theorem with integer valued summands,
of the number of isolated vertices in an \ER\ random graph, 
and of the Curie--Weiss model of magnetism, where we provide optimal or near optimal rates of convergence in the local limit metric. 
In the Hoeffding example, even the discrete normal approximation bounds
seem to be new. The general result follows from Stein's method, and requires a new bound
on the Stein solution for the Poisson distribution, which is of general interest.

\end{abstract}


\section{Introduction}\label{sec1}

The local limit theorem for general sums~$W$ of independent integer valued random variables
began with the seminal work of \cite{Esseen1945}, and is now
well understood \cite[Chapter~VII]{Petrov1975}. For sums of dependent
random variables, however, much less is known.  A key idea, introduced by \cite{McDonald1979},
is to prove local theorems by using a combination of the corresponding (global) central
limit theorem, together with an {\it a priori\/}
estimate of the smoothness of the distribution~$\law(W)$ being approximated.  \cite{Rollin2005} used
this strategy, combined with Stein's method, to develop a systematic approach to
approximation by the discrete normal distribution, not only locally, but also globally with 
respect to the total variation distance. 

In both \cite{McDonald1979} and \cite{Rollin2005}, the smoothness estimates are derived by
finding a suitable large collection of conditionally independent Bernoulli random variables
embedded in the construction of~$W$.  
In \cite{Rollin2015}, a fundamentally different
technique was discovered, which is instead based on finding a suitable exchangeable pair 
in the spirit of 
\cite[Chapter~I, Lemma~3]{Stein1986}.  They combined it with Landau-Kolmogorov inequalities
to give local limit approximations in a variety of examples, but often with less than
optimal rates. In this paper, we use Stein's method and the smoothness approach
to give a general local limit approximation theorem for settings in which dependence can 
be described in terms of an (approximate) Stein coupling as given in \cite{Chen2010}.
This formulation is very flexible, and includes exchangeable
pair, local dependence and size-bias settings as particular instances. 
In the examples that we consider, our approach yields bounds that,
when expressed as functions of~$\s^2 \Def  \Var W$, are no worse than a log factor from the
optimal rate of~$\bigo(\s^{-2})$.
Our general bound
is expressed in terms of quantities that typically arise when using Stein's method
in the central limit context.  As a result, we are able to give bounds for the total variation
error in discrete normal approximation as well as the local limit bounds with no
extra effort.

\subsection{Translated Poisson distribution}
As in  \cite{Rollin2007}, we use \emph{translated Poisson} distributions as approximating family instead of discretised normal
distributions --- Lemma~\ref{lem1} justifies this to the accuracy of interest to us. We say that the random variable~$Z$ has the translated Poisson distribution and
write~$Z\sim\TP(\mu, \sigma^2)$ if~$Z-s \sim \Po(\sigma^2+\gamma)$, where
\ben{\label{1}
    s \Def \floor{\mu-\sigma^2},\quad \gamma \Def \mu - \sigma^2 - \floor{\mu-\sigma^2},
}
and where~$\Po(\l)$ denotes the Poisson distribution with mean~$\l$.
Note that~$\IE Z = \mu$ and~$\sigma^2\leq \Var Z \leq \sigma^2+1$. 
The translated Poisson distribution is a Poisson distribution, but translated by an integer 
chosen so that both its mean and variance closely match prescribed values~$\mu$ and~$\s^2$.
The following lemma say that the translated Poisson distribution is an appropriate substitute for the discretized
normal distribution. Its proof follows easily from the classical local central limit theorem with error.

\begin{lemma}\label{lem1}
There exists a constant~$C>0$ such that, for all~$\mu\in\IR$ and~$\sigma^2\ge1$,
\be{
  \sup_{n\in \IZ} \bbbabs{ \TP(\mu,\sigma^2)\{n\} -
      \frac{1}{\sqrt{2\pi\sigma^2}}\exp\bbbklr{-\frac{(n-\mu)^2}{2\sigma^2}}} \leq \frac{C}{\sigma^{2}}.
}
\end{lemma}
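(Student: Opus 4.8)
The plan is to reduce the claim to a local limit estimate for the Poisson law followed by a one-line perturbation of the Gaussian density. Recalling~\eqref{1}, write $\lambda \Def \sigma^2 + \gamma$ and $s \Def \floor{\mu - \sigma^2}$, so that $1 \le \sigma^2 \le \lambda < \sigma^2 + 1$, $\mu = s + \lambda$, and $\TP(\mu,\sigma^2)$ is the law of $s + Y$ with $Y \sim \Po(\lambda)$; hence $\TP(\mu,\sigma^2)\{n\} = \IP(Y = m)$ for $m \Def n - s$, with $m - \lambda = n - \mu$. Putting $\phi_\lambda(y)\Def(2\pi\lambda)^{-1/2}\e^{-y^2/(2\lambda)}$, the triangle inequality reduces the lemma to proving that, uniformly over $m\in\IZ$ and $\lambda\ge1$,
\[
(\mathrm{I})\Def\babs{\IP(Y=m)-\phi_\lambda(m-\lambda)}
\qquad\text{and}\qquad
(\mathrm{II})\Def\babs{\phi_\lambda(m-\lambda)-(2\pi\sigma^2)^{-1/2}\e^{-(m-\lambda)^2/(2\sigma^2)}}
\]
are each $\bigo(\sigma^{-2})$; here I use that $\N(\lambda,\sigma^2)$ and the target law $\N(\mu,\sigma^2)$ have the same mean $\mu=s+\lambda$, so that the second density above equals $(2\pi\sigma^2)^{-1/2}\e^{-(n-\mu)^2/(2\sigma^2)}$ exactly.

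Term $(\mathrm{II})$ is elementary. With $x\Def m-\lambda$ held fixed, set $g(t)\Def(2\pi t)^{-1/2}\e^{-x^2/(2t)}$; then $g'(t)=(2t)^{-1}g(t)\klr{x^2/t-1}$, and since $\sup_{u\ge0}\abs{u-1}\e^{-u/2}$ is a finite absolute constant one gets $\abs{g'(t)}\le c\,t^{-3/2}$ uniformly in $x$ and in $t\ge1$. As $0\le\lambda-\sigma^2<1$ and $\sigma^2\ge1$, the mean value theorem yields
\[
(\mathrm{II})=\abs{g(\lambda)-g(\sigma^2)}\le(\lambda-\sigma^2)\sup_{t\ge\sigma^2}\abs{g'(t)}\le c\,\sigma^{-3}\le c\,\sigma^{-2}.
\]

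Term $(\mathrm{I})$ is the classical local central limit theorem with error for the Poisson law, which I would establish from Stirling's formula. Split at $\abs{m-\lambda}=K\sqrt{\lambda\log\lambda}$ for a suitable absolute constant $K$. In the far range, $\IP(Y=m)$ is $\lito(\sigma^{-2})$ by a standard Poisson tail bound while $\phi_\lambda(m-\lambda)\le(2\pi\lambda)^{-1/2}\lambda^{-K^2/2}$ is $\lito(\sigma^{-2})$ by inspection. In the central range one writes $\IP(Y=m)=\phi_\lambda(m-\lambda)\,\e^{E}$, $E=E(m,\lambda)$, whose Stirling expansion in powers of $(m-\lambda)/\lambda$ begins with $-\tfrac{m-\lambda}{2\lambda}+\tfrac{(m-\lambda)^3}{6\lambda^2}$; since $\sup_y\abs{y}^{j}\e^{-y^2/(2\lambda)}$ has order $\lambda^{j/2}$, every monomial $\abs{m-\lambda}^{j}\lambda^{-p}$ occurring in $E$ (and in $E^2$) satisfies $\phi_\lambda(m-\lambda)\abs{m-\lambda}^{j}\lambda^{-p}=\bigo(\lambda^{-1})$, so that $\phi_\lambda(m-\lambda)\babs{\e^{E}-1}=\bigo(\lambda^{-1})=\bigo(\sigma^{-2})$. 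Combining with the bound on $(\mathrm{II})$ proves the lemma. The one genuinely delicate point is the $\bigo(\lambda^{-1})$ rate in $(\mathrm{I})$: a crude estimate of $\babs{\e^{E}-1}$ gives only $\bigo(\lambda^{-1/2})$, so one must keep the first two Edgeworth-type terms of $E$ explicit and exploit the Gaussian tail rather than absorb them. The non-integer value of $\lambda$ is harmless, for instance on writing $\Po(\lambda)$ as the convolution of a sum of $\floor{\lambda}$ i.i.d.\ $\Po(1)$ variables with an independent $\Po(\lambda-\floor{\lambda})$ and applying the lattice local limit theorem with remainder to the former.
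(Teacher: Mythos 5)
Your proposal is correct and amounts to the same approach the paper takes: the paper's proof is simply an appeal to the classical local central limit theorem with error term, and your argument reduces the lemma to exactly that ingredient --- the Poisson local estimate $\sup_m|\IP(Y=m)-\phi_\lambda(m-\lambda)| = O(\lambda^{-1})$ (your term (I)) together with the elementary $\lambda\mapsto\sigma^2$ perturbation of the Gaussian density (your term (II)) --- and then supplies a self-contained Stirling/Edgeworth proof of the classical estimate rather than citing it. The delicate point you identify (retaining the explicit first-order terms of $E$ and absorbing powers of $m-\lambda$ into the Gaussian factor to get $O(\lambda^{-1})$ rather than a cruder rate) is handled correctly.
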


\noindent We also note some basic properties of the translated Poisson distributions. Define the following
``smoothness" measure of an integer valued distribution,
\ben{\label{2}
  S_l(\law(W)) \Def \sup_{h:\norm{h}\leq 1} \abs{\IE \Delta^l h(W)}, \qquad \ts l\ge1,
}
where~$\Delta$ denotes the first difference operator~$\Delta g(k)\Def g(k+1)-g(k)$. 
Variations of the smoothing terms~\eq{2} frequently appear 
in integer supported distributional approximation results; 
see, for example, \cite{Barbour1999}, \cite{Goldstein2006}, \cite{Rollin2008a} and \cite{Fang2014}.

\noindent The next result shows the typical smoothness expected for approximately discretized normal 
distributions. It is shown in \cite[Lemma~4.1]{Rollin2015}.
\begin{lemma}\label{lem2} For each~$k\geq 1$ there exists a constant~$C(k)$ such that, for all~$\mu\in\IR$ and~$\sigma^2\ge1$,
\bgn{
    S_k(\TP(\mu,\sigma^2)) \leq \frac{C(k)}{\sigma^{k}}. \label{3}
}
\end{lemma}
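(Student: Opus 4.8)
The plan is to observe that a translated Poisson law is a genuine Poisson law shifted by an integer, reduce the statement to a smoothness bound for $\Po(\lambda)$, and then exploit the infinite divisibility of the Poisson distribution together with a single one-dimensional estimate for $S_1$.

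First I would note that $S_k$ is invariant under integer translation, since replacing $h$ by $h(\,\cdot\,+s)$ preserves $\norm{h}$ and commutes with $\Delta^k$. By \eqref{1}, $\TP(\mu,\sigma^2)$ is $\Po(\lambda)$ shifted by the integer $s$, where $\lambda=\sigma^2+\gamma\in[\sigma^2,\sigma^2+1)$; hence $S_k(\TP(\mu,\sigma^2))=S_k(\Po(\lambda))$. Since $\lambda\ge\sigma^2$, a bound $S_k(\Po(\lambda))\le C(k)\lambda^{-k/2}$ valid for all $\lambda\ge1$ immediately yields $S_k(\TP(\mu,\sigma^2))\le C(k)\sigma^{-k}$, so it suffices to prove the former.

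The engine is the submultiplicativity estimate $S_{j+l}(\law(X+Y))\le S_j(\law(X))\,S_l(\law(Y))$ for independent integer valued $X,Y$. To prove it, fix $h$ with $\norm{h}\le1$ and set $\Psi(y)\Def\IE\Delta^l h(X+y)$. For each fixed $y$ the shift $h_y(x)\Def h(x+y)$ has $\norm{h_y}\le1$ and satisfies $\Delta^l h(X+y)=(\Delta^l h_y)(X)$, so $\abs{\Psi(y)}\le S_l(\law(X))$, i.e.\ $\norm{\Psi}\le S_l(\law(X))$; and since finite differences commute and $\Delta^l h$ is bounded, $(\Delta^j\Psi)(y)=\IE\Delta^{j+l}h(X+y)$, whence $\babs{\IE\Delta^{j+l}h(X+Y)}=\babs{\IE(\Delta^j\Psi)(Y)}\le S_j(\law(Y))\norm{\Psi}$. (Alternatively, for the Poisson case one may write the $k$-th finite difference of the $\Po(\lambda)$ mass function as the $k$-fold convolution power of the first difference of the $\Po(\lambda/k)$ mass function and apply Young's inequality for $\ell^1$ convolution.) I would also record the base estimate: for $\Po(\mu)$ with mass function $p$, $p(y+1)-p(y)=p(y)(\mu-y-1)/(y+1)$ has the sign of $\mu-y-1$ and so changes sign at most once over $y\ge0$, giving $S_1(\Po(\mu))=\sum_y\abs{p(y+1)-p(y)}\le 2\max_y p(y)$, and Stirling's formula gives $\max_y p(y)\le C\mu^{-1/2}$ for every $\mu\ge1$.

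To conclude, when $\lambda\ge k$ I would decompose $\Po(\lambda)=\law(Y_1+\dots+Y_k)$ with the $Y_i$ i.i.d.\ $\Po(\lambda/k)$; iterating the submultiplicativity and invoking the base estimate (legitimate because $\lambda/k\ge1$) gives $S_k(\Po(\lambda))\le S_1(\Po(\lambda/k))^k\le(2C)^k(k/\lambda)^{k/2}$. When $1\le\lambda<k$, the crude bound $\norm{\Delta^k h}\le 2^k\norm{h}$ gives $S_k(\Po(\lambda))\le 2^k\le 2^k k^{k/2}\lambda^{-k/2}$. Combining the two ranges yields $S_k(\Po(\lambda))\le C(k)\lambda^{-k/2}$ for all $\lambda\ge1$, and the reduction of the second paragraph then finishes the proof. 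The only genuine wrinkle is uniformity over the whole range $\sigma^2\ge1$: the convolution argument gains a factor $\lambda^{-1/2}$ per Poisson summand only when each summand has mean at least $1$, which is what forces the separate (but entirely trivial) treatment of $1\le\lambda<k$; everything else is routine bookkeeping. If one wanted sharp constants rather than the present crude $C(k)$, one could instead expand the finite differences of the Poisson mass function in Poisson--Charlier polynomials and use their known $L^2(\Po(\lambda))$ norms, at the cost of a heavier computation.
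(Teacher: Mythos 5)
Your argument is correct: the translation invariance of $S_k$, the submultiplicativity $S_{j+l}(\law(X+Y))\le S_j(\law(X))S_l(\law(Y))$ for independent summands (your conditioning argument is sound), the splitting $\Po(\lambda)=\Po(\lambda/k)^{*k}$, the unimodality bound $S_1(\Po(\nu))\le 2\sup_j\Po(\nu)\{j\}=\bigo(\nu^{-1/2})$, and the separate crude treatment of $1\le\lambda<k$ together give $S_k(\Po(\lambda))=\bigo(\lambda^{-k/2})$ uniformly in $\lambda\ge1$, hence the claimed $\bigo(\sigma^{-k})$ since $\lambda=\sigma^2+\gamma\ge\sigma^2$. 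Note, however, that the paper does not prove this lemma at all: it is quoted from R\"ollin and Ross (2015), Lemma~4.1, so there is no in-text argument to compare against. Your proof is the standard self-contained route to such smoothness estimates (exploit infinite divisibility to trade one factor of $\lambda^{-1/2}$ per independent Poisson component), and it is essentially the same mechanism underlying the cited result; an alternative, which you mention in passing, is to write $\Delta^k$ of the Poisson mass function in terms of Charlier polynomials and use Cauchy--Schwarz with their $L^2(\Po(\lambda))$ norms, which gives sharper constants ($\sqrt{k!}\,\lambda^{-k/2}$) at the price of more computation. The only points worth tightening in your write-up are bookkeeping: state explicitly that $S_1(\law(W))=\sum_{j\in\IZ}\abs{p(j)-p(j-1)}$ (attained by taking $h$ to be the appropriate sign function, and including the boundary term $p(0)$), and that the iteration of submultiplicativity is a $k$-step induction; neither affects correctness.
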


\subsection{Stein couplings}

Our approximations are designed for random variables~$W$ that form part of a {\it Stein coupling\/}.
Following \cite{Chen2010}, we say that the random variables~$(W,W',G,R)$ with~$\IE W=\mu$ form an 
\emph{approximate Stein coupling} if
\ben{
   \IE[G (f(W')-f(W))] = \IE[(W-\mu)f(W)]+\IE [Rf(W)],\label{4}
}
for all~$f$ such that the expectations exist. If~$R=0$ almost surely,  we call~$(W,W',G)$ a Stein coupling. Some examples of 
Stein couplings well used in Stein's method are the following:
\begin{description}[leftmargin=0em,rightmargin=0.5em]
\item[Local dependence.] Let~$W=\sum_{i=1}^n X_i$, with~$ \IE X_i = \mu_i$ 
for~$1\leq i\leq n$. Suppose that, for each~$i$, there is
$A_i\subset\{1,\dots,n\}$ such that~$X_i$ is independent of~$(X_j)_{j\not\in A_i}$. Then, for~$I$ a random index, 
uniformly distributed on~$\{1,\ldots, n\}$ and independent of $(X_i)_{i=1}^n$,
\ben{\label{locdepstncoup}
  (W, W', G) \Def \Bigl(W, W-\sum_{j\in A_I} X_j, -n (X_I-\mu_I) \Bigr),
}
is a Stein coupling.

\item[Size bias.] If~$W^s$ has the size bias distribution of~$W$ and~$\IE W=\mu$, then
\be{
  (W,W',G) \Def (W, W^s, \mu)
}
is a Stein coupling.

\item[Exchangeable pairs.] If~$(W, W')$ is an exchangeable pair satisfying the linearity condition
\ben{
   \IE[W'-W|W] = -a(W-\mu)+a R, \label{5}
}
then 
\be{
   ( W,W',G, R) \Def \bbbklr{W, W', \frac{W'-W}{2a},  R }
}
is an approximate Stein coupling. 

\item[Exchangeable pairs, one-sided version.] If~$(W, W')$ is an exchangeable pair that satisfies~\eq{5},
then 
\ben{\label{6}
   ( W,W',G, R) \Def \bbbklr{W, W', \frac{W'-W}{a}\I[{\ts W'-W>0}],  R }
}
is an approximate Stein coupling. 

\end{description}

\noindent Note also that, for ~$(W,W',G,R)$  an approximate Stein coupling,
\ben{
   \IE [G(W'-W)] = \sigma^2+\IE[R(W-\mu)], \label{7}
}
which can be seen by taking~$f(x)=x$ and~$f(x)=1$ (to find~$\IE R=0$) in the defining relation~\eq{4}.
In particular, if~$R=0$ almost surely, then~$\IE [G(W'-W)] = \s^2$.

\section{Main results and applications}\label{sec2}

We bound the error in the approximation by the translated Poisson distribution of the distributions~$\law(W)$
of integer valued random variables with finite variances that can be represented as the~$W$ in an 
(approximate) Stein coupling.
Our bounds are expressed in terms of the moments of~$W$ and of expectations involving the
quantities~$G$ and~$D \Def  W'-W$, and the conditional smoothness coefficients~$S_l(\law(W|\cal{F}))$ for some appropriate
associated sigma-field~$\cal{F}$.  
Exchangeable pairs, size-biasing, and local dependence appear ubiquitously 
when using Stein's method for distributional approximation and concentration inequalities,
so (1) many of the terms appearing in our bound can be fruitfully bounded using well-established techniques,
and (2) new techniques developed here for bounding commonly appearing terms will prove useful 
in other applications of Stein's method.

\subsection{An abstract bound}

In order to express the accuracy of translated Poisson approximation, we define the \emph{total variation metric} as
\be{
  \dtv\bklr{\law(X), \law(Y)} \Def \sup_{A \subseteq \IZ}|\IP[X\in A] - \IP[Y\in A]|,
 }
as well as a metric to capture the local differences as
\be{
   \dloc\bklr{\law(X), \law(Y)} \Def \sup_{a \in \IZ}|\IP[{\ts X=a}] - \IP[{\ts Y=a}]|.
}

We can now state our main general approximation result, which is proved in Section~\ref{sec:genres}.
\begin{theorem}\label{thm1}
Let~$(W,W',G,R)$ be an approximate Stein coupling with~$W$ and~$W'$ integer valued, ~$\IE W=\mu$ and~$\Var(W)=\sigma^2$.
Set~$D\Def W'-W$, and let~$\%F_1$ and~$\%F_2$ be sigma-algebras such that\/~$W$ is~$\%F_1$-measurable and such that\/~$(G,D)$ is ~$\%F_2$-measurable. Define  
\be{
  \Psi \Def \babs{\IE [GD |\% F_1]-\IE [GD]}, \qquad \U \Def \IE\bkle{\abs{G D(D-1)}S_2(\law\klr{W\given\%F_2})}.
}
Then
\ben{
  \dtv\bklr{\law(W),\TP(\mu,\sigma^2)}  
   \leq \frac{\IE\Psi}{\sigma^2} 
      + \frac{2\sqrt{\IE R^2}}{\sigma}
        + \frac{2(\U + 1)}{\s}, \label{8}
}
and
\besn{
  \dloc\bklr{\law(W),\TP(\mu,\sigma^2)} 
  &\leq \frac{\IE \Psi}{\sigma^{3} \sqrt{2e}} +\frac{\IE[\Psi \abs{W-\mu}]}{ \sigma^{4}} 
      +\sup_{a\in\IZ}\frac{\IE \{\Psi \I[W=a]\}}{\sigma^2}\\
  &\qquad\quad+ \frac{\sqrt{\IE R^2}}{\sigma^2}\Bigl(2+\frac{1}{\sqrt{2e}}
                +\sigma \sup_{a\in\IZ} \IP({\ts W=a}) \Bigr)  + \frac{2(\U + 1)}{\s^2}. \label{9}
}
\end{theorem}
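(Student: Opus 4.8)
The plan is to run Stein's method for the Poisson distribution, but applied to the \emph{shifted} variable $W-s$ (where $s=\lfloor\mu-\sigma^2\rfloor$ as in~\eqref{1}), so that the target is the genuine Poisson law $\Po(\sigma^2+\gamma)$ with $\lambda\Def\sigma^2+\gamma$. For a bounded test function $h$ on $\IZ$, let $g=g_h$ solve the Stein equation for $\Po(\lambda)$, namely $\lambda g(k+1)-kg(k)=h(k)-\Po(\lambda)\{h\}$; equivalently, writing $f(k)\Def g(k-s)$ on the support of $W$, one has an identity of the form $\IE[\lambda f(W+1)-(W-s)f(W)] = \IE h(W) - \Po(\lambda)\{h\}$. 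The first main step is to rewrite the left-hand side using the approximate Stein coupling. Expanding $f(W')=f(W)+\sum_{j}\Delta f(W+\cdots)$ along the increment $D=W'-W$ and invoking~\eqref{4}, one expresses $\IE[(W-\mu)f(W)]$ — and hence, after adding and subtracting the deterministic part $\mu-\lambda-s = -\gamma$ — in terms of $\IE[G(f(W')-f(W))]$, the remainder $\IE[Rf(W)]$, and a second-order Taylor term involving $GD(D-1)$ and $\Delta^2 f$. The term $\IE[Rf(W)]$ is controlled by $\|f\|\,\sqrt{\IE R^2}$ in total variation, and by an analogous bound using $\|\Delta f\|$ and the density-type factor $\sup_a\IP(W=a)$ for $d_{\mathrm{loc}}$.

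The second step handles the leading term $\IE[GD\,\Delta f(W)]$. Here I would use the identity~\eqref{7}, $\IE[GD]=\sigma^2+\IE[R(W-\mu)]$, together with the fact that for the Poisson Stein solution, $\lambda\,\IE[\Delta f(W)\cdot(\text{something})]$ reproduces a matching term; more precisely one writes $\IE[GD\,\Delta f(W)] = \IE[GD]\,\IE[\Delta f(W)] + \IE\big[(GD-\IE[GD|\%F_1])\Delta f(W)\big]$, using that $W$ is $\%F_1$-measurable to pull $\Delta f(W)$ out of the inner conditional expectation. The fluctuation term is then bounded by $\|\Delta f\|\,\IE\Psi$ (total variation) or by the refined quantities $\IE[\Psi|W-\mu|]$, $\sup_a\IE[\Psi\I[W=a]]$ weighted by the corresponding derivative bounds on $f$ (for $d_{\mathrm{loc}}$). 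The choice of $\lambda=\sigma^2+\gamma$ is exactly what makes the $\IE[GD]=\sigma^2$-type term cancel against the $\lambda f(W+1)$ contribution up to the $\IE[R(W-\mu)]$ correction, which feeds back into the $\sqrt{\IE R^2}$ terms.

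The third step is the second-order Taylor remainder $\tfrac12\IE[GD(D-1)\,\Delta^2 f(W)]$ (plus the correction from comparing the discrete summation of $\Delta f$ along $D$ to the linear term). To bound this I would condition on $\%F_2$, under which $(G,D)$ is measurable, and apply the smoothness estimate: $\big|\IE[\Delta^2 f(W)\mid\%F_2]\big| \le \|f\|\,S_2(\law(W\mid\%F_2))$ by the definition~\eqref{2}. This produces the term $\U\Def\IE[|GD(D-1)|\,S_2(\law(W\mid\%F_2))]$, and the constant $1$ added to $\U$ absorbs a crude a~priori bound (coming from the regime where the conditional smoothness estimate is not sharp, or from the $\Psi$-free part of the expansion). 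For the local metric one uses the sharper bounds on $\|f\|$, $\|\Delta f\|$ for the Poisson Stein solution — in particular the classical $\|\Delta g\|\le\lambda^{-1}(1-e^{-\lambda})$-type estimates, which is where the factors $\sigma^{-2}$, $\sigma^{-3}$ and the constant $\sqrt{2e}$ enter (the latter from $\sup_k k^{1/2}e^{-k/2}$-style optimisations in the bounds on the Poisson Stein solution).

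The step I expect to be the genuine obstacle is obtaining the \emph{correct-order} bounds on the Stein solution $g$ and its differences that are needed for the $d_{\mathrm{loc}}$ estimate~\eqref{9}: the total variation argument only needs $\|g\|=\bigo(\sigma^{-2})$-type bounds, but for the local bound one needs control of $\|\Delta f\|$ and refined estimates that are sensitive to the point mass one is approximating, and these are precisely the ``new bound on the Stein solution for the Poisson distribution'' advertised in the abstract. I would isolate that as a separate lemma (presumably proved earlier in the paper or in Section~\ref{sec:genres}) and simply invoke it here; modulo that lemma, the rest is bookkeeping — tracking which of $\|f\|$, $\|\Delta f\|$, $\|\Delta^2 f\|$ multiplies each of the three error sources ($\Psi$, $R$, $\U$) and collecting terms. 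One subtlety to be careful about is that $\Var(W-s)$ need not equal $\lambda$ exactly (only $\sigma^2\le\Var Z\le\sigma^2+1$ for the target), so the ``mean/variance matching'' is only approximate; the resulting $\bigo(1)$ discrepancy is what is swallowed into the additive constants in~\eqref{8}–\eqref{9}.
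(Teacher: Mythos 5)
Your skeleton is the same as the paper's: write the Stein equation for $\Po(\lambda)$, $\lambda=\sigma^2+\gamma$, applied to $W-s$ (this is exactly Proposition~\ref{lem5}); add and subtract $GD\,\Delta f(W)$ inside $\IE[G(f(W')-f(W))]$ and use \eq{7} to replace $\sigma^2$ by $\IE[GD]-\IE[R(W-\mu)]$; control the resulting fluctuation by conditioning on $\%F_1$, which produces $\Psi$; and control the accumulated second differences by conditioning on $\%F_2$ and using $S_2(\law(W\given\%F_2))$, which produces $\U$ with the factor $|GD(D-1)|/2$ --- this is the content of Lemma~\ref{lem8}, and your treatment of the $R$-terms by Cauchy--Schwarz matches the paper. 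One slip to fix: as written, your centred term $\IE\bkle{(GD-\IE[GD\given\%F_1])\Delta f(W)}$ is identically zero, precisely because $\Delta f(W)$ is $\%F_1$-measurable; the quantity you need is $\IE\bkle{(\IE[GD\given\%F_1]-\IE[GD])\Delta f(W)}$, which is then bounded by $\IE[\Psi\,\abs{\Delta f(W)}]$ and, for \eq{9}, split according to a pointwise bound on $\abs{\Delta f_a}$.

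The genuine gap is the ingredient you defer. The classical estimates $\norm{g_a}\le\lambda^{-1}$ and $\norm{\Delta g_A}\le\lambda^{-1}$ (Lemma~\ref{lem6}) only give $\IE[\Psi\,\abs{\Delta f_a(W)}]\le\IE\Psi/\sigma^2$, i.e.\ they reproduce the total variation rate in \eq{8} but cannot yield the three-term structure of \eq{9}, namely $\IE\Psi/(\sigma^3\sqrt{2e})+\IE[\Psi\abs{W-\mu}]/\sigma^4+\sup_a\IE[\Psi\I[W=a]]/\sigma^2$, nor the analogous refinement of the $R(W-\mu)$ term. What is needed is the non-uniform bound $\abs{\Delta f_a(k)}\le \sigma^{-3}(2e)^{-1/2}+\abs{\mu-k}\sigma^{-4}+\I[k=a+s]\,\sigma^{-2}$ of Lemmas~\ref{prop1} and~\ref{lem7}; this is the new estimate advertised in the abstract, so it cannot simply be ``invoked'' --- it has to be proved, which the paper does from the explicit representation \eq{23} of $g_a$ by a case analysis comparing $k$ with $a$ and with $\lambda$, together with $\sup_{k\ge0}\sqrt{\lambda}\,\Po(\lambda)\{k\}\le(2e)^{-1/2}$. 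Modulo that lemma (and the sign fix above), your bookkeeping would indeed deliver \eq{8} and \eq{9}, so the strategy is right but the proposal is incomplete as a proof of the local bound.
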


Note that we have distinguished $\Psi$ and $\U$ as the significant quantities in the bound, but that $\Psi$ is random while $\U$ is not.

\begin{remark}\label{rem1} If~$(W,W')$ is an exchangeable pair such that~\eq{5} holds, and if we assume in addition that~$D\in\{-1,0,+1\}$, then we can use the coupling \eq{6}; in this case, $\Psi$ and~$\U$ simplify to 
\ben{\label{eq1}
  \Psi = \frac{1}{a}\babs{\IP({\ts D=1}|\%F_1) -\IP({\ts D=1})}
  \qquad\text{and}\qquad \U = 0.
}
\end{remark}

\subsection{Towards a concrete bound}

The bounds in Theorem~\ref{thm1} are still rather abstract, and it may not be obvious how to handle the individual terms in concrete applications. We now show that, by making a natural additional assumption, the terms appearing in \eq{8} and \eq{9} can be made more manageable. 

To this end, we assume that, for some~$\kappa>0$, for some integer~$k\geq 0$ and for some non-negative random variable~$T$, we have
\ben{\label{10}
  \Psi \leq \sigma  \k\sum_{j=0}^k \left(\frac{\abs{W-\mu}}{\sigma}\right)^j+ T.
}
This assumption appears naturally in many applications, and it is worthwhile emphasising that it is weaker than similar conditions appearing in the literature around Stein's method, such as Condition (3.3) in Theorem 3.1 of \cite{Chen2013a} or the condition in Theorem~3.11 of \cite{Rollin2007}. We now have the following easy corollary of Theorem~\ref{thm1}.

\begin{corollary}\label{cor1} Under the conditions of Theorem~\ref{thm1} and assuming in addition \eq{10}, we have
\ban{
  &\dtv\bklr{\law(W),\TP(\mu,\sigma^2)}  
   \leq \frac{\k}{\sigma}\sum_{j=0}^{k} \frac{\IE \abs{W-\mu}^j}{\sigma^j}
    + \frac{\IE T}{\s^2}
      + \frac{ 2\sqrt{\IE R^2}}{\sigma}
        + \frac{2(\U + 1)}{\s}. \label{11}
} 
and
\ban{
  &\dloc\bklr{\law(W),\TP(\mu,\sigma^2)} \notag \\
  &\qquad \leq \frac{2\k}{\sigma^2}\sum_{j=0}^{k+1} \frac{\IE \abs{W-\mu}^j}{\sigma^j}\label{12}
        + \frac{\k}{\sigma^2}\,\sup_{a\in\IZ} \left( \IP({\ts W=a})\sum_{j=0}^k\frac{\abs{a-\mu}^j}{\sigma^{j-1}}\right)\\
  &\qquad\qquad +\frac{2 \sqrt{\IE T^2}}{\sigma^3} +\frac{\sup_{a\in\IZ}  \IE\kle{T\I[{\ts W=a}] }}{\sigma^2} 
                   \label{13}\\
  &\qquad\qquad + \frac{\sqrt{\IE R^2}}{\sigma^2}\left(3+\sigma \sup_{a\in\IZ} \IP({\ts W=a}) \right) \label{14} \\
  & \qquad\qquad + \frac{2(\U + 1)}{\s^2}. \label{15}
}
\end{corollary}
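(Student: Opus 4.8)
The plan is to derive Corollary~\ref{cor1} as a direct consequence of Theorem~\ref{thm1} by substituting the structural bound~\eq{10} on~$\Psi$ into each of the terms $\IE\Psi$, $\IE[\Psi\abs{W-\mu}]$ and $\sup_a \IE\{\Psi\I[W=a]\}$ that appear in~\eq{8} and~\eq{9}, and then simplifying. No deep new idea is needed; the work is bookkeeping with expectations and a couple of elementary inequalities. The $R$-term and the $\U$-term in \eq{8}--\eq{9} are untouched, so they reappear verbatim (up to the minor consolidation $2 + 1/\sqrt{2e} + \sigma\sup_a\IP(W=a) \le 3 + \sigma\sup_a\IP(W=a)$ in line~\eq{14}, using $1/\sqrt{2e} < 1$); only the three $\Psi$-dependent terms require manipulation.

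\textbf{Step 1: the total variation bound.} Taking expectations in~\eq{10} gives
\be{
  \IE\Psi \le \sigma\k\sum_{j=0}^k \frac{\IE\abs{W-\mu}^j}{\sigma^j} + \IE T,
}
and dividing by $\sigma^2$ turns the first term of~\eq{8} into the first two terms of~\eq{11}. Combined with the unchanged $R$- and $\U$-terms, this is exactly~\eq{11}.

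\textbf{Step 2: the local bound.} Here I would handle the three $\Psi$-terms of~\eq{9} one at a time. For $\IE\Psi/(\sigma^3\sqrt{2e})$, substitute as in Step 1; the resulting $\k/(\sigma^2)\sum_{j=0}^k \IE\abs{W-\mu}^j/\sigma^j$ (the $\sqrt{2e}$ factor only helps) gets absorbed into the $2\k/\sigma^2\sum_{j=0}^{k+1}$ of line~\eq{12}, and $\IE T/(\sigma^3\sqrt{2e})$ into the $2\sqrt{\IE T^2}/\sigma^3$ of line~\eq{13} via $\IE T \le \sqrt{\IE T^2}$. For $\IE[\Psi\abs{W-\mu}]/\sigma^4$, multiply~\eq{10} by $\abs{W-\mu}$ before taking expectations: this produces $\k/\sigma^3 \sum_{j=0}^k \IE\abs{W-\mu}^{j+1}/\sigma^j = \k/\sigma^2\sum_{j=1}^{k+1}\IE\abs{W-\mu}^j/\sigma^j$, which again fits under line~\eq{12}, plus $\IE[T\abs{W-\mu}]/\sigma^4$ — this last piece I would bound by $\sqrt{\IE T^2}\sqrt{\IE(W-\mu)^2}/\sigma^4 = \sqrt{\IE T^2}/\sigma^3$ using Cauchy--Schwarz and $\Var W = \sigma^2$, and fold it into line~\eq{13}. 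For $\sup_a \IE\{\Psi\I[W=a]\}/\sigma^2$, note that on $\{W=a\}$ the bound~\eq{10} reads $\Psi \le \sigma\k\sum_{j=0}^k (\abs{a-\mu}/\sigma)^j + T$, so $\IE\{\Psi\I[W=a]\} \le \sigma\k\sum_{j=0}^k(\abs{a-\mu}/\sigma)^j\,\IP(W=a) + \IE\{T\I[W=a]\}$; dividing by $\sigma^2$ and taking the supremum gives precisely the $\k/\sigma^2 \sup_a(\cdots)$ term in~\eq{12} and the $\sup_a\IE[T\I[W=a]]/\sigma^2$ term in~\eq{13}. Collecting all contributions yields~\eq{12}--\eq{15}.

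\textbf{Main obstacle.} There is no genuine obstacle — the corollary is routine — but the one point requiring a little care is verifying that the various index shifts ($\sum_{j=0}^k \to \sum_{j=1}^{k+1}$, and the combination of a $\sum_0^k$ with a $\sum_1^{k+1}$) really are dominated by the single $2\k/\sigma^2\sum_{j=0}^{k+1}$ of line~\eq{12}; the factor $2$ there is exactly what absorbs the two separate sums coming from the first and second $\Psi$-terms of~\eq{9}. One should also keep track of the harmless numerical slack (the $\sqrt{2e}$ denominators, $1/\sqrt{2e}<1$) to see that the stated clean constants are correct.
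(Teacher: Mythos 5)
Your proof is correct and is exactly the intended argument: the paper states this as an "easy corollary" without written proof, and your substitution of \eq{10} into each $\Psi$-term of \eq{8}--\eq{9}, with Cauchy--Schwarz for $\IE[T\abs{W-\mu}]$, $\IE T\le\sqrt{\IE T^2}$, the index shift absorbed by the factor $2$ in \eq{12}, and $1/\sqrt{2e}<1$ for \eq{14}, reproduces \eq{11}--\eq{15} precisely.
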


Assumption~\eq{10} is always satisfied by taking~$T=\Psi$ and an empty sum, so its real use
is if~$T$ is more easily managed than~$\Psi$ --- for instance, if~$T=0$ almost surely.
In what follows, we take assumption~\eq{10} to be satisfied, and consider
the bounds \eq{12}--\eq{15} in turn. We tacitly assume throughout the following discussion
that we have a sequence of integer valued random variables~$W = W_m$ with means~$\mu=\mu_m$ and whose variances 
$\s^2 = \s^2_m$ grow
to infinity with~$m$;  order estimates are to be understood  as~$m\to\infty$, and the dependence on~$m$ is
suppressed in the notation.

First, we expect~$\IE \{\abs{\s^{-1}(W-\mu)}^{k+1}\}$ to be bounded, so that the first term of~\eq{12} is of our 
target order~$\bigo(\s^{-2})$. For the second term of~\eq{12}, we have the following lemma.

\begin{lemma}\label{lem3}
Write~$\d \Def  \dtv\bklr{\law(W),\TP(\mu,\sigma^2)}$ and assume that for some~$1/2 < \a \le 1$, we have  
\be{
   \d = \bigo(\sigma^{-\a}) \quad \mbox{ and } \quad S_2(\law(W))=\bigo(\sigma^{-1-\a}).
}
Then, for any~$\ell \geq j \geq1$, 
\be{
\sup_{a\in\IZ} \IP({\ts W=a})\frac{\abs{a-\mu}^j}{\sigma^{j-1}}\leq \bigo(1) +  
     \IE\Bigl\{ \Bigl(\frac{|W-\mu|}\s\Bigr)^\ell \Bigr\} \sigma^{1/2 + \a - (2\a-1)\ell/(2j)},
}
and
\be{
\sup_{a\in\IZ}\IP({\ts W=a})=\bigo(\sigma^{-1}).
}

\end{lemma}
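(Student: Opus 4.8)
The plan is to bootstrap from the global total variation bound~$\d$ to a pointwise bound on~$\IP(W=a)$, and then upgrade that to the claimed bound on~$\IP(W=a)\abs{a-\mu}^j/\s^{j-1}$. The starting observation is that for the translated Poisson law, classical estimates (or Lemma~\ref{lem2} with~$k=1$, which controls~$S_1(\TP(\mu,\s^2))=\bigo(\s^{-1})$, together with the fact that a Poisson$(\la)$ point mass is~$\bigo(\la^{-1/2})$) give~$\sup_{a}\TP(\mu,\s^2)\{a\}=\bigo(\s^{-1})$. Since~$\abs{\IP(W=a)-\TP(\mu,\s^2)\{a\}}\le \dloc\le\d=\bigo(\s^{-\a})=\bigo(\s^{-1/2})$ is too weak on its own, I would instead combine~$\sup_a\TP(\mu,\s^2)\{a\}=\bigo(\s^{-1})$ with the total variation bound applied to a \emph{single point}: $\abs{\IP(W=a)-\TP(\mu,\s^2)\{a\}}\le\d$, which already gives~$\sup_a\IP(W=a)\le\bigo(\s^{-1})+\bigo(\s^{-\a})=\bigo(\s^{-1})$ only if~$\a\ge1$; for~$\a<1$ one more step is needed. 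The clean way is to write, for any window~$[a,a+r)$ of length~$r$, $\IP(W=a)\le \IP(W\in[a,a+r))$, and then$\IP(W\in[a,a+r))\le \TP(\mu,\s^2)\{[a,a+r)\}+\d\le r\sup_b\TP\{b\}+\d$; but this only recovers~$\bigo(\s^{-\a})$. So the genuine tool is the smoothness~$S_1(\law(W))$: $\IP(W=a)$ cannot exceed roughly~$S_1(\law(W))$ plus a controlled tail, and $S_1(\law(W))\le S_1(\TP(\mu,\s^2))+S_1$-transfer~$\le\bigo(\s^{-1})+2\d$. Hence~$\sup_a\IP(W=a)\le \bigo(\s^{-1})+\bigo(\s^{-\a})$. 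Since~$\a>1/2$ this is still not~$\bigo(\s^{-1})$ unless we iterate.

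The iteration is the heart of the argument. Having~$\sup_a\IP(W=a)\le\bigo(\s^{-\a})$ as input, I would use the hypothesis~$S_2(\law(W))=\bigo(\s^{-1-\a})$ to relate~$S_1$ to point probabilities: by the elementary inequality~$S_1(\law(W))\le C\bklr{S_2(\law(W))}^{1/2}\bklr{\sup_a\IP(W=a)}^{1/2}$ (a Landau--Kolmogorov / interpolation-type bound between the first and second difference operators acting on the distribution, of the kind used in~\cite{Rollin2015}), one gets$S_1(\law(W))\le\bigo(\s^{-(1+\a)/2})\bigo(\s^{-\a/2})=\bigo(\s^{-(1+2\a)/2})$, hence~$\sup_a\IP(W=a)\le\bigo(\s^{-1})+2\d+\bigo(\s^{-(1+2\a)/2})$. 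Because~$(1+2\a)/2>1$ when~$\a>1/2$... wait, $(1+2\a)/2>1\iff\a>1/2$, so already after \emph{one} interpolation step we obtain~$\sup_a\IP(W=a)=\bigo(\s^{-1})+\bigo(\s^{-\a})$, and feeding this back once more with the same interpolation drives the exponent past~$1$, after finitely many steps (geometric improvement of the exponent towards~$1$) yielding the second display~$\sup_a\IP(W=a)=\bigo(\s^{-1})$. I expect this fixed-point / self-improvement step to be the main obstacle: one must be careful that the interpolation constant does not degrade across iterations and that the~$\bigo(\s^{-\a})$ contribution from~$\d$ never worsens, so that a single pass in fact suffices.

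For the first display, fix~$\ell\ge j\ge1$ and split~$\sup_a\IP(W=a)\abs{a-\mu}^j/\s^{j-1}$ into the region~$\abs{a-\mu}\le \s^{1+\th}$ and its complement, for a threshold exponent~$\th>0$ to be chosen. On~$\abs{a-\mu}\le\s^{1+\th}$, use the now-established~$\IP(W=a)=\bigo(\s^{-1})$ to bound the quantity by~$\bigo(\s^{-1})\s^{(1+\th)j}/\s^{j-1}=\bigo(\s^{\th j})$, which is~$\bigo(1)$ precisely when~$\th=0$ — so in fact one takes the near-trivial threshold and the ``bulk'' contributes~$\bigo(1)$. On the tail~$\abs{a-\mu}>t$ for a threshold~$t=\s^{\beta}$, bound~$\IP(W=a)\le1$ and use Markov on the~$\ell$-th moment:~$\IP(\abs{W-\mu}>t)\le \IE\{\abs{W-\mu}^\ell\}t^{-\ell}$, so for~$a$ in the tail, $\IP(W=a)$ is at most this, and then$\IP(W=a)\abs{a-\mu}^j/\s^{j-1}$ is controlled by optimising~$t$; the exponent~$1/2+\a-(2\a-1)\ell/(2j)$ in the statement is exactly what drops out of balancing the bulk term~$\s^{1/2+\a}$-type contribution (coming from the~$\bigo(\s^{-\a})$ slack in~$\d$, not from~$\s^{-1}$) against the tail moment term~$\IE\{(\abs{W-\mu}/\s)^\ell\}\,\s^{(\text{stuff})}$. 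Concretely I would set the threshold so that the point-probability bound switches from the~$\d$-governed regime to the moment-governed regime, leading to the stated~$\bigo(1)+\IE\{(\abs{W-\mu}/\s)^\ell\}\s^{1/2+\a-(2\a-1)\ell/(2j)}$; the bookkeeping of which~$\a$-power wins where is routine once the interpolation bound above is in hand, so I do not expect this final step to pose real difficulty beyond careful exponent arithmetic.
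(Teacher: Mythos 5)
Your central step --- the interpolation inequality $S_1(\law(W))\le C\bklr{S_2(\law(W))\,\sup_a\IP({\ts W=a})}^{1/2}$ --- is false. Test it on $\TP(\mu,\s^2)$ itself: there $S_1\asymp\s^{-1}$, $S_2\asymp\s^{-2}$ and $\sup_a\TP(\mu,\s^2)\{a\}\asymp\s^{-1}$, so the right-hand side is of order $\s^{-3/2}$, smaller than the left-hand side by a factor $\s^{1/2}$. Its falsity also shows in what it would prove: iterating $\sup_a\IP({\ts W=a})\le S_1(\law(W))\le C\bklr{S_2(\law(W))\sup_a\IP({\ts W=a})}^{1/2}$ gives $\sup_a\IP({\ts W=a})\le C^2S_2(\law(W))=\bigo(\s^{-1-\a})=\lito(\s^{-1})$, which is impossible when $W$ is itself (close to) translated Poisson. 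Moreover, even granting the inequality, your iteration never closes, because the $2\d=\bigo(\s^{-\a})$ term coming from transferring $S_1$ across total variation reappears in every pass, so the exponent is stuck at $\a<1$. The correct Landau--Kolmogorov input compares the \emph{two} laws rather than interpolating within one: by \cite[Theorem~2.2(i) and Lemma~3.1]{Rollin2015}, $\dloc\bklr{\law(W),\TP(\mu,\s^2)}\le C\,\dtv\bklr{\law(W),\TP(\mu,\s^2)}^{1/2}\bklr{S_2(\law(W))+S_2(\TP(\mu,\s^2))}^{1/2}=\bigo(\s^{-1/2-\a})$, and since $\a>1/2$ this rate beats $\s^{-1}$; hence, with $s,\g$ as in~\eq{1}, $\IP({\ts W=a})\le\Po(\s^2+\g)\{a-s\}+\bigo(\s^{-1/2-\a})=\bigo(\s^{-1})$, which is the second assertion.

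The first display also does not follow from the uniform bound $\sup_a\IP({\ts W=a})=\bigo(\s^{-1})$ plus a bulk/tail split, as you propose. With only that bound the bulk region can extend no further than $\abs{a-\mu}\lesssim\s$, and Markov's inequality on $\abs{a-\mu}>\s$ then produces a term of order $\s\,\IE\{(\abs{W-\mu}/\s)^\ell\}$, i.e.\ exponent $1$ instead of the stated $1/2+\a-(2\a-1)\ell/(2j)$ --- and exponent $1$ would make the lemma useless for Remark~\ref{rem2}. What is actually needed is the pointwise comparison $\abs{\IP({\ts W=a})-\Po(\s^2+\g)\{a-s\}}=\bigo(\s^{-1/2-\a})$ obtained above, combined with the Poisson decay $\sup_{\la\ge1}\sup_{r}\Po(\la)\{r\}\,\abs{r-\la}^j\la^{-(j-1)/2}<\infty$ together with the identity $r-\la=a-\mu$ for $r=a-s$, $\la=\s^2+\g$: the Poisson part then contributes $\bigo(1)$ for \emph{all} $a$, while the $\bigo(\s^{-1/2-\a})$ error contributes $\abs{a-\mu}^j/\s^{j-1/2+\a}$, which is $\bigo(1)$ precisely on the much larger region $\abs{a-\mu}\le\s^{1+(2\a-1)/(2j)}$; applying Markov's inequality only outside that region yields exactly the claimed exponent. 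In short, the split must be driven by the $\dloc$ rate and the Poisson tail behaviour, not by the $\bigo(\s^{-1})$ uniform bound, and without a valid substitute for your interpolation inequality neither assertion of the lemma is reached.
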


\begin{proof}
Recall the definitions~$s =  \floor{\mu-\s^2}$ and~$\g = \mu - \s^2 - s$, and let~$\PIP_\lambda(\cdot):=\Po(\lambda)\{\cdot\}$.
First note that
\be{
   \sup_{a\in\IZ} \IP({\ts W=a}) \leq \d + \bigo(\sigma^{-1}),
}
which follows easily from the definition of total variation, and because~$\sup_{a\in\IZ} \PIP_\lambda(a) = \bigo(\lambda^{-1/2})$ as~$\lambda\to \infty$, where
$\la = \s^2 + \g$.

For the first assertion, we 
first bound~$\IP(W=a)$ for~$a$
``near"~$\mu$. Note that the second assertion follows from~\eq{eq:t} below, used in this argument, and the last sentence of the previous paragraph. Now, \cite[Theorem~2.2\emph{(i)} with~$l=2,~m=1$, and Lemma~3.1]{Rollin2015} 
implies that for some constant~$C$,
\be{
	\dloc(\law(W), \TP(\mu,\sigma^2)) 
		\leq C \dtv(\law(W),\TP(\mu,\sigma^s))^{1/2} \left(S_2(\law(W))+S_2(\TP(\mu,\sigma^2))\right)^{1/2},
}
which, with~\eq{3} and the hypotheses of the lemma, implies
\be{
   \dloc(\law(W),\TP(\mu,\sigma^2)) = \bigo(\sigma^{-1/2-\a}).
}
Hence
\ben{\label{eq:t}
    \abs{\IP({\ts W=a})-\PIP_{\s^2+\g}(a-s)} = \bigo(\sigma^{-1/2-\a}),
}
so that
\be{
   \IP({\ts W=a})\frac{\abs{a-\mu}^j}{\sigma^{j-1}}
     \leq \frac{\abs{a-\mu}^j}{\sigma^{j-1/2+\a}}+ \PIP_{\s^2+\g}(a-s)\frac{\abs{a-\mu}^j}{\sigma^{j-1}}.
}
Combining this inequality with the observation that 
\be{
  \sup_{\lambda\geq 1} \sup_{r\in\IZ} \PIP_\lambda(r) \frac{\abs{r-\lambda}^j}{\lambda^{(j-1)/2}}< \infty,
}
and noting that, for~$r = a-s$ and~$\la = \s^2+\g$, 
\[
    r - \la = a-(\mu-\s^2 - \g) - (\s^2+\g) = a - \mu,
\]
it follows that ~$\IP(W=a)\{\s^{-(j-1)}\abs{a-\mu}^j\} = \bigo(1)$ for~$\s^{-1}\abs{a - \mu}\leq \sigma^{(2\a-1)/(2j)}$. 

For values of~$a$ ``far'' from~$\mu$, that is, $\s^{-1}\abs{a - \mu} > \sigma^{(2\a-1)/(2j)}$, use Markov's inequality to give
\bes{
 \frac{\abs{a-\mu}^j}{\sigma^{j-1}}\IP({\ts W=a})&\leq 
                \frac{\s\abs{a-\mu}^j}{\sigma^{j}}\IP\left(\frac{|W-\mu|}\s{\ts{} \geq {}}\frac{|a-\mu|}\s\right) \\
 	&\leq \s \Bigl|\frac{a-\mu}\s\Bigr|^{-(l-j)} 
             \IE\Bigl\{\Bigl(\frac{|W-\mu|}\s\Bigr)^\ell\Bigr\}\\
   &\leq \s  \IE \Bigl\{\Bigl(\frac{|W-\mu|}\s\Bigr)^\ell\Bigr\} \s^{(2\a-1)(j-\ell)/2j},
}
concluding the proof.
\end{proof}

\begin{remark}\label{rem2}
Thus, if~\eq{10} and the hypotheses of Lemma~\ref{lem3} are satisfied, 
and if~$\IE \abs{W-\mu}^{K}=\bigo(\sigma^{K})$ for some~$K \ge k(1 + 2\a)/(2\alpha-1)$, with~$\a$ as in Lemma~\ref{lem3}, 
then the second term of~\eq{12} is of order~$\bigo(\s^{-2})$.
\end{remark}

We next show that, if~$T$ is concentrated around zero, then the two terms of~\eq{13} can be 
suitably bounded.

\begin{lemma}\label{lem4}
Suppose that the non-negative random variable~$T= T_m$ satisfies
\ben{\label{16}
    \IE\bigl( \s^{-1}T\I[{\ts \s^{-1}T \ge t}]\bigr) \leq \eps(t),\ \qquad\ts t\ge1, 
}
for some $\eps(t)$ with~$\int_1^\infty \eps(t)\,dt < K < \infty$, and~$K$ is the same for all~$m$.
Then~$\IE T^2=\bigo(\sigma^2)$.

If~\eq{16} is satisfied, then for any~$k\in\IZ$ and~$t \geq 1$,
$$
    \IE\left[T\I[{\ts W=k}]\right]\leq \s \eps(t) + t \sigma \sup_{a\in\IZ}\IP({\ts W=a}).
$$
\end{lemma}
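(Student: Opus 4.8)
The plan is to prove the two assertions separately, as they are essentially independent. For the first assertion, I would write $\IE T^2 = \IE[(\s^{-1}T)^2]\,\s^2$ and bound $\IE[(\s^{-1}T)^2]$ by a constant using \eqref{16}. The natural device is the layer-cake (Fubini) identity: for a non-negative random variable $Y$, $\IE[Y^2] = \int_0^\infty 2t\,\IP(Y \ge t)\,dt$, or more conveniently here $\IE[Y\cdot Y] = \IE[Y\I[Y\ge 1]\cdot Y] + \IE[Y\I[Y<1]\cdot Y]$. The second piece is at most $\IE[Y\I[Y<1]]\le \eps(1)$ (or just $\le 1$ if one prefers a cruder bound, using $Y<1$). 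For the first piece, apply the identity $Y\I[Y\ge1] = \I[Y\ge1] + \int_1^\infty \I[Y\ge t]\,dt$ pointwise (valid since $Y\I[Y\ge1]-\I[Y\ge1] = (Y-1)\I[Y\ge1] = \int_1^Y dt\cdot\I[Y\ge1] = \int_1^\infty\I[Y\ge t]\,dt$), multiply by $Y$ and take expectations, then use Tonelli to swap $\IE$ and $\int_1^\infty$:
\be{
  \IE\bigl[Y^2\I[Y\ge1]\bigr] = \IE\bigl[Y\I[Y\ge1]\bigr] + \int_1^\infty \IE\bigl[Y\I[Y\ge t]\bigr]\,dt
  \le \eps(1) + \int_1^\infty \eps(t)\,dt \le \eps(1) + K,
}
where we used \eqref{16} with $t=1$ for the first term and with general $t$ inside the integral. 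Since $\eps(1)\le K$ (as $\eps$ is the bounding function and $\int_1^\infty\eps\le K$, one has $\eps(1)$ finite, though strictly one should just note $\IE[Y\I[Y\ge1]]\le\eps(1)<\infty$), this gives $\IE[Y^2]\le \eps(1)+K+\eps(1) = \bigo(1)$ uniformly in $m$, hence $\IE T^2 = \bigo(\s^2)$.

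For the second assertion, fix $k\in\IZ$ and $t\ge1$, and split the expectation according to whether $\s^{-1}T$ is large or small:
\be{
  \IE\bigl[T\I[W=k]\bigr] = \IE\bigl[T\I[W=k]\I[\s^{-1}T\ge t]\bigr] + \IE\bigl[T\I[W=k]\I[\s^{-1}T< t]\bigr].
}
On the first term, drop the indicator $\I[W=k]\le1$ and use \eqref{16} directly: $\IE[T\I[\s^{-1}T\ge t]] = \s\,\IE[\s^{-1}T\,\I[\s^{-1}T\ge t]] \le \s\eps(t)$. On the second term, bound $T\le t\s$ on the event $\{\s^{-1}T<t\}$, so that $\IE[T\I[W=k]\I[\s^{-1}T<t]] \le t\s\,\IP(W=k) \le t\s\sup_{a\in\IZ}\IP(W=a)$. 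Adding the two bounds gives exactly the claimed inequality.

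I do not expect any serious obstacle here; the only mild care needed is the Tonelli swap in the first part (justified by non-negativity of the integrand) and making sure the constant extracted is genuinely independent of $m$, which follows since \eqref{16} holds with the same $\eps$ and the same $K$ for all $m$ by hypothesis. The statement and proof are routine truncation arguments, and the two parts should each take only a few lines.
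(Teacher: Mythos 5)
Your proposal is correct and follows essentially the same route as the paper: the paper dismisses the first assertion with the words ``by a standard calculation'', and proves the second assertion by exactly your split over $\{\s^{-1}T\ge t\}$ and its complement, bounding the two pieces by $\s\eps(t)$ and by $t\s\,\IP(W=k)\le t\s\sup_{a\in\IZ}\IP(W=a)$ respectively.

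Two small blemishes in your first part, both easily repaired. The parenthetical claim $\eps(1)\le K$ does not follow from the hypotheses (only $\int_1^\infty\eps(t)\,dt\le K$ is assumed, and $\eps$ may vary with $m$), and falling back on ``$\eps(1)<\infty$'' does not by itself give a bound that is uniform in $m$; likewise $\IE[Y\I[Y<1]]\le\eps(1)$ is not what \eqref{16} provides, though your fallback bound $\le 1$ for that piece is the right one. The cleanest patch is the layer-cake bound you mention but do not carry through: with $Y=\s^{-1}T$, $\IE Y^2=\int_0^\infty 2t\,\IP(Y>t)\,dt\le 1+2\int_1^\infty\IE\bigl[Y\I[Y\ge t]\bigr]\,dt\le 1+2K$, which uses only \eqref{16} and is uniform in $m$. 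Alternatively, keep your decomposition and replace $\eps(1)$ by noting $\IE[Y\I[Y\ge1]]\le 2+\IE[Y\I[Y\ge2]]\le 2+\inf_{1\le t\le2}\eps(t)\le 2+\int_1^2\eps(t)\,dt\le 2+K$. With either one-line fix your argument yields $\IE T^2=\bigo(\s^2)$ exactly as claimed, matching the paper.
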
 

\begin{proof}
By a standard calculation, $\IE T^2 \leq \s^2\bigl(1 + \int_1^\infty \eps(t)\,dt\bigr)$. For the second assertion, 
note that
\be{
  \IE\left[T\I[{\ts W=k}]\right]\leq \IE \{T\I[{\ts T \geq t} \sigma]\} + t\sigma \IP({\ts W=k}).
} 
The former term is bounded by~$\s\eps(t)$ and the latter by $t \sigma \sup_{a\in\IZ}\IP({\ts W=a})$.
\end{proof}

\begin{remark}
For example, suppose that~$\sup_{a\in\IZ}\IP({\ts W=a})= \bigo(\s^{-1})$.  Then if~$\eps(t) = 0$ for all~$t \ge t_0$, for some~$t_0 < \infty$,  
we have a bound for~\eq{13} of the ideal order~$\bigo(\s^{-2})$.
If, for some constant~$c$, we have~$\eps(t) \le e^{-t^2/2c}$, then the choice~$t = \sqrt{2c\log\s}$
gives~$\sup_k \IE\left[T\I[W=k]\right] = \bigo(\sqrt{\log\s})$, and a bound for~\eq{13} of order
$\bigo(\s^{-2}\sqrt{\log\s})$.  If~$\eps(t) \le e^{-t/c}$, then the choice~$t = c\log\s$ gives a bound 
for~\eq{13} of order~$\bigo(\s^{-2}\log\s)$. 
Note also that, under the conditions of Lemma~\ref{lem3},
\[
    \IE T\IP[{\ts W=k}] \leq \sqrt{\IE T^2}\IP[{\ts W=k}] = \bigo(1),
\]
and so, in~\eq{13}, $\sup_a \IE[T\I[{\ts W=a}]]$ can be replaced by
\[
     \sup_a |\Cov(T,\I[{\ts W=a}])| + \bigo(1 ).
\]
\end{remark}

For the remaining terms in Corollary~\ref{cor1}, if~$\IE R^2=\bigo(1)$, then it is easy to see that~\eq{14} is of order~$\bigo(\sigma^{-1} \sup_{a\in\IZ}\IP({\ts W=a}) +\sigma^{-2})$. This leaves~$\U$,
which is handled using the methods discussed in \cite{Rollin2015} and illustrated in the applications below.  
We collect the results above in the following corollary.

\begin{corollary}\label{17}
Assume the notation and hypotheses of Theorem~\ref{thm1} and suppose that~\eq{10} is satisfied for some choice of~$\kappa$, $k$ and~$T$.
\begin{enumerate}[label=\textrm{(\roman*)}]
\item If ~$\IE\{(\s^{-1}|W-\mu|)^{k}\} = \bigo(1)$, ~$\IE T = \bigo(\s)$ and~$\IE R^2=\bigo(1)$,  then it follows that
\[
     \delta\Def \dtv\bklr{\law(W),\TP(\mu,\sigma^2)} = \bigo(\s^{-1}(1 + \U)).
\]
\item\label{cor2} If~$\IE R^2=\bigo(1)$ and, for some~$1/2 < \a \le 1$, 
\begin{enumerate}[label=(\arabic*),leftmargin=3em]
\item\label{condi1}~$\delta = \bigo(\sigma^{-\a})$ and~$S_2(\law(W)) = \bigo(\s^{-1-\a})$;
\item\label{condii1} ~$\IE\{(\s^{-1}|W-\mu|)^{K}\} = \bigo(1)$ for some~$K \ge k(1 + 2\a)/(2\alpha-1)$;
\item\label{condiii1}~$T/\s$ is almost surely\ uniformly bounded,
\end{enumerate}
then 
\ben{\label{18}
   \dloc\bklr{\law(W),\TP(\mu,\sigma^2)} = \bigo(\s^{-2}(\U + 1)).
}
If~\ref{condiii1} is replaced by
\begin{enumerate}[label=(\arabic*a),start=3,leftmargin=3em]
\item\label{condiiia1}~$\sup_a\abs{\Cov(T,\I[W=a])}$ is bounded, 
\end{enumerate}
then~\eq{18} still holds.  If~\ref{condiii1} is replaced by
\begin{enumerate}[label=(\arabic*b),start=3,leftmargin=3em]
\item\label{condiiib1}~$\eps(t)$ of Lemma~\ref{lem4} has an exponential tail, 
\end{enumerate}
then the term~$\bigo(\s^{-2}(\U + 1))$ has to be replaced by~$\bigo(\s^{-2}(\U + 1)\log\s)$ in
the bound in~\eq{18}.  
\end{enumerate}
\end{corollary}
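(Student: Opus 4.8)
The plan is to read the result off Corollary~\ref{cor1}: each hypothesis here has been tailored to make one of the terms of~\eq{11} (for part~(i)) or of~\eq{12}--\eq{15} (for part~(ii)) of the target order, so the proof is a term-by-term verification, all the substantive estimates having already been isolated in Lemmas~\ref{lem3} and~\ref{lem4} and in the two remarks that follow them. For part~(i) I would start from~\eq{11}. Lyapunov's inequality upgrades $\IE\{(\s^{-1}\abs{W-\mu})^{k}\}=\bigo(1)$ to $\IE\{(\s^{-1}\abs{W-\mu})^{j}\}=\bigo(1)$ for all $0\le j\le k$, so the first sum in~\eq{11} is $\bigo(\s^{-1})$; $\IE T=\bigo(\s)$ makes $\IE T/\s^{2}$ equal to $\bigo(\s^{-1})$; $\IE R^{2}=\bigo(1)$ makes $2\sqrt{\IE R^{2}}/\s$ equal to $\bigo(\s^{-1})$; and the last term is $2(\U+1)/\s$. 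Adding the four contributions gives $\delta=\bigo(\s^{-1}(1+\U))$.

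For part~(ii) I would work from~\eq{12}--\eq{15}. The key preliminary is Lemma~\ref{lem3}, whose two hypotheses are exactly~\ref{condi1}: it supplies $\dloc(\law(W),\TP(\mu,\s^{2}))=\bigo(\s^{-1/2-\a})$ and, in particular, $\sup_{a\in\IZ}\IP({\ts W=a})=\bigo(\s^{-1})$, which is used repeatedly. Now take the terms in turn. The first sum in~\eq{12} runs to $j=k+1$, and since the elementary inequality $k(1+2\a)/(2\a-1)\ge k+1$ holds for $1/2<\a\le1$ and $k\ge1$ (and for $k=0$ the only moments needed, of orders $0$ and $1$, are bounded via $\Var W=\s^{2}$), hypothesis~\ref{condii1} together with Lyapunov makes that sum $\bigo(\s^{-2})$; the second sum in~\eq{12} is precisely the quantity shown to be $\bigo(\s^{-2})$ in Remark~\ref{rem2} under~\ref{condi1} and~\ref{condii1}. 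Term~\eq{14} is $\bigo(\s^{-2})$ from $\IE R^{2}=\bigo(1)$ and $\sup_a\IP({\ts W=a})=\bigo(\s^{-1})$, and~\eq{15} is $2(\U+1)/\s^{2}$.

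It remains to handle~\eq{13}, and this is where the three variants of the $T$-condition enter. The first term of~\eq{13} is $\bigo(\s^{-2})$ as soon as $\IE T^{2}=\bigo(\s^{2})$, which is immediate under~\ref{condiii1} (then $T\le c\s$ a.s.) and holds under~\ref{condiiib1} by the first assertion of Lemma~\ref{lem4} (an exponential tail being integrable). The second term of~\eq{13} is controlled case by case: under~\ref{condiii1}, $\IE[T\I[{\ts W=a}]]\le c\s\,\IP({\ts W=a})=\bigo(1)$; under~\ref{condiiia1}, the remark following Lemma~\ref{lem4} replaces $\sup_a\IE[T\I[{\ts W=a}]]$ by $\sup_a\abs{\Cov(T,\I[{\ts W=a}])}+\bigo(1)=\bigo(1)$ (this step also uses $\IE T^{2}=\bigo(\s^{2})$, which in the applications is available); and under~\ref{condiiib1}, optimising the second assertion of Lemma~\ref{lem4} over $t$ --- taking $t=c\log\s$ and using $\sup_a\IP({\ts W=a})=\bigo(\s^{-1})$ --- gives $\sup_a\IE[T\I[{\ts W=a}]]=\bigo(\log\s)$. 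Hence~\eq{13} is $\bigo(\s^{-2})$ in the first two cases and $\bigo(\s^{-2}\log\s)$ in the third, and collecting all the pieces yields~\eq{18}, respectively~\eq{18} with the extra $\log\s$ factor under~\ref{condiiib1}.

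There is no single hard step: the delicate parts of the argument reside in Lemmas~\ref{lem3} and~\ref{lem4} (the near-/far-from-$\mu$ split together with the smoothness bounds in Lemma~\ref{lem3}, and the truncation estimate for $T$ in Lemma~\ref{lem4}), and the corollary just packages their conclusions. The only points demanding a little care are the elementary exponent inequality needed to push the moment sum in~\eq{12} up to order $k+1$, recognising that the hypotheses of Lemma~\ref{lem3} coincide with~\ref{condi1} (so that $\sup_a\IP({\ts W=a})=\bigo(\s^{-1})$ is available throughout), and, in the variant~\ref{condiiia1}, remembering that the $\sqrt{\IE T^{2}}$ term in~\eq{13} still has to be bounded, so that the covariance condition on $T$ must be paired with $\IE T^{2}=\bigo(\s^{2})$.
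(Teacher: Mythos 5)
Your proposal is correct and follows exactly the route the paper intends: the paper gives no separate proof of this corollary (it is introduced with ``We collect the results above''), and your term-by-term assembly of \eq{11} for part (i) and of \eq{12}--\eq{15} for part (ii), using Remark~\ref{rem2} for the second sum in \eq{12}, Lemma~\ref{lem4} and its following remark for \eq{13} in the three variants, and $\sup_a\IP({\ts W=a})=\bigo(\s^{-1})$ from Lemma~\ref{lem3} throughout, is precisely that assembly. Your side observations (the exponent inequality giving the $(k+1)$st moment, and the need for $\IE T^2=\bigo(\s^2)$ alongside condition~(3a)) are accurate and consistent with the paper's implicit assumptions.
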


\noindent  Note that the value of~$\a$ used in~\emph{\ref{condi1}} and~\emph{\ref{condii1}} does not appear in the error estimate; the assumptions
are there to ensure that enough moments of~$\s^{-1}|W-\mu|$ are finite.  However, if the largest~$\a$
for which~$S_2(\law(W)) = \bigo(\s^{-1-\a})$ is such that~$\a < 1$, then, even in the ideal case in which 
$|GD(D-1)| = \bigo(\s^2)$ almost surely, the quantity~$\U$ is only guaranteed to be of order~$O(\s^{1-\a})$,
yielding a bound in~\eq{18} of order~$\bigo(\s^{-1-\a})$,
and not of the ideal order~$\bigo(\s^{-2})$.

\begin{remark}[Sums of independent random variables]
If~$W_n=\sum_{i=1}^n X_i$,  where the $X_i$ are independent integer valued random variables 
such that $\sum_{i=1}^n \IE|X_i - \IE X_i|^3 = \bigo(\s_n^2)$, and which satisfy 
an aperiodicity condition, then Theorems~4 and~5 in Chapter VII of \cite{Petrov1975} imply that the error made 
in the local limit approximation by the discrete normal (and hence the translated Poisson) is of best order~$\s_n^{-2}$.
If we assume the somewhat stronger aperiodicity assumption, that $\s_n^{-2}\sum_{i=1}^n(1-\dtv(\law(X_i),\law(X_i+1))$ 
is bounded away from zero,  then
it follows that $S_2(\law(W_n))=\bigo(\sigma_n^{-2})$ and that
$\Upsilon$ is also of the correct order for good rates, so that
the only problem term, in the decomposition~\eq{10}, is $\sup_{k\in\IZ}\IE\left[T\I[{\ts W=k}]\right]$.
Using our approach, in conjunction with the local dependence Stein coupling at~\eq{locdepstncoup} with $A_i=\{i\}$,
we deduce a~$T=\Psi$ of the form 
\be{
\bbabs{\sum_{i=1}^n X_i (X_i-\mu_i)- \sigma_n^2},
}
and, as in Corollary~\ref{17}, this together with Lemma~\ref{lem4} leads to bounds that depend strongly 
on the tail behaviour of $X_i$. For example, if the $X_i$ have finite $(2j)$th moment for some $j\geq 2$, 
then by H\"older's and Rosenthal's inequalities, 
\be{
     \sup_{k\in\IZ}\IE\left[T\I[{\ts W=k}]\right]
   \leq \left(\IE T^{j}\right)^{1/j} \sup_{k\in\IZ}\left(\IP(\ts W=k)\right)^{(j-1)/j} =\bigo\left(n^{-1/(2j)}\right),
} 
with a constant depending on $j$, implying an upper bound on the local metric of sub-optimal order $n^{-1+1/(2j)}$. 
Thus a direct application of our approach, which can be effective in much more 
challenging applications,  is sub-optimal in this classical case.  As it happens, a small modification
of the proof of Lemma~\ref{lem8} below, adding and subtracting $GD\Delta f(W')$ rather than $GD\Delta f(W)$
after~(\ref{29}),
eliminates the problem term, and leads to an approximation error of the same asymptotic order as
that given in \cite[Theorems~4 and~5 in Chapter VII]{Petrov1975}, albeit under our stronger aperiodicity
assumption. This is essentially the approach taken by \cite[Theorem~2.1]{Rollin2008a}. 
\end{remark}

\subsection{Hoeffding permutation statistic}

Let~$(a_{i j})_{1\leq i,j\leq n}$ be an array of integers, and define
\be{
   W \Def \sum_{i=1}^n a_{i \ppi_i},
}
where~$\ppi$ is a uniformly chosen random permutation.  Defining 
\bg{
   a_{i+} \Def \sum_{j=1}^n a_{ij},
   \qquad a_{+j} \Def \sum_{i=1}^n a_{ij},
    \qquad a_{++} \Def \sum_{i,j=1}^n a_{ij},\\
       \ha_{ij} \Def a_{ij}-\frac{a_{i+}}{n}-\frac{a_{+j}}{n}+\frac{a_{++}}{n^2},
}
we have
\ben{\label{19}
  \mu \Def \IE W = \frac{1}{n} a_{++} \qquad \text{and}\qquad 
           \s^2 \Def \Var W  = \frac{1}{n-1} \sum_{i,j} \ha_{i j}^2. 
}
We are interested in the accuracy of local approximation to~$\law(W)$  by~$\TP(\mu,\s^2)$.

Central limit theorems for~$W$ have a long history going back to 
 \cite{Wald1944} and \cite{Hoeffding1951}. More recent refinements obtaining Berry-Esseen error bounds 
under various conditions on the matrix~$a$ were derived by  \cite{Bolthausen1984}, \cite{Goldstein2005a}, \cite{Chen2015};
see references of the last for an up to date history.

Our main results are in terms of asymptotic rates as~$n\to\infty$ for a sequence of such matrices~$a\un$, assuming 
that, for suitable positive constants~$A_1,\a_0,\a_1$, and~$\a_2$,
\begin{itemize}
 \item {\sl Assumption A1.}~$\max_{1\le i,j\le n}|a_{ij}\un| \leq A_1 < \infty$ and 
     ~$n^{-1}(\s \un)^2 \ge (\a_0 A_1)^2 > 0$  for all~$n$.
       \item  {\sl Assumption A2.} There exists a set~$\cI \Def  \bigl\{\{i_{l1},i_{l2}\},\,1\le l \le n_1\bigr\}$ 
of~$n_1 \ge \a_1 n$ disjoint 
pairs of indices in~$[n]$ such that, for~$\{i_1,i_2\} \in \cI$, there exists a set~$\cJ(i_1,i_2)$
of ~$n_2 \ge \a_2 n^2$ pairs (clearly {\em not\/} disjoint) of indices~$\{j_1,j_2\}$ such that
\[
     |a_{i_1,j_1} + a_{i_2,j_2} - a_{i_1,j_2} - a_{i_2,j_1}| = 1.
\]
\end{itemize}
We also assume without loss that ~$|\mu| \le n/2$, by replacing~$a_{ij}$ by~$a_{ij}+m$ for all~$i,j$, for a suitably chosen integer~$m$.
Assumption A1 is a standard simplifying assumption when studying the Hoeffding permutation statistic and something like
Assumption A2 is necessary to ensure that~$W$ is not concentrated on a sub-lattice of~$\IZ$. For instance, if all 
the~$a_{ij}$ are even, the distribution of~$W$ lies on
the lattice~$2\IZ$, and then~$S_2(\law(W)) = 4$; so some additional conditions on the matrix~$a$ are needed
to ensure smoothness. Our methods still apply if either of these assumptions are weakened, but at the cost of worse bounds or greater technicality. 

Our main result is as follows.

\begin{theorem}
Let~$a\un$ be a sequence of matrices satisfying Assumptions~A1 and~A2 and let~$W=W_n$ be the
Hoeffding permutation statistic defined above. Then,
\ba{
\dtv\left(\law(W),\TP\bklr{\IE W, \Var(W)}\right) &= \bigo\left(\sigma^{-1}\right),\\
\dloc\left(\law(W),\TP\bklr{\IE W, \Var(W)}\right) &= \bigo\left(\frac{\sqrt{\log(\sigma)}}{\sigma^2}\right).
}
\end{theorem}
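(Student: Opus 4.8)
The plan is to apply Corollary~\ref{17} to the local dependence Stein coupling \eqref{locdepstncoup} attached to~$W$, with~$A_i$ taken to be a small neighbourhood of~$i$ under~$\ppi$. Concretely, write~$W = \sum_{i=1}^n a_{i\ppi_i}$ and realise the permutation dynamics through a random transposition: the natural coupling here is to pick a uniform index~$I$ and a uniform~$J$, and let~$W'$ be the value of the statistic after swapping~$\ppi_I$ and~$\ppi_J$. This is the classical exchangeable-pair/Stein-coupling setup for the Hoeffding statistic (as in \cite{Bolthausen1984} and \cite{Goldstein2005a}), and one checks the linearity condition~\eqref{5} holds exactly with~$a = 2/n$ (or a comparable constant), so that~$R=0$ and~$G$ and~$D = W'-W$ are bounded by~$\bigo(1)$ uniformly, using Assumption~A1. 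Thus~$\IE R^2 = \bigo(1)$ trivially, and the first moment condition of Corollary~\ref{17} part~(i), namely~$\IE\{(\s^{-1}|W-\mu|)^K\} = \bigo(1)$ for all~$K$, follows from standard concentration for the Hoeffding statistic under bounded entries (e.g.\ a bounded-differences/Azuma argument, since a transposition changes~$W$ by~$\bigo(1)$).

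**The smoothness term~$\U$.** The heart of the argument is to bound~$\U = \IE[\,|GD(D-1)|\,S_2(\law(W\mid\%F_2))\,]$, and this is where Assumption~A2 does its work. Since~$|GD(D-1)| = \bigo(1)$ uniformly, it suffices to show~$S_2(\law(W\mid\%F_2)) = \bigo(\s^{-2})$, i.e.\ that~$W$ is smooth even after conditioning on the randomness not involved in the chosen pair. I would construct~$\%F_2$ so that, conditionally, there remain order~$n$ independent (or conditionally independent) two-point "coin flips'' embedded in~$W$: for each pair~$\{i_{l1},i_{l2}\}\in\cI$ from Assumption~A2, conditionally on the values~$\{\ppi_{i_{l1}},\ppi_{i_{l2}}\}$ as an unordered pair lying in some~$\{j_1,j_2\}\in\cJ(i_{l1},i_{l2})$, the contribution of this pair to~$W$ is~$a_{i_{l1}j_1}+a_{i_{l2}j_2}$ or~$a_{i_{l1}j_2}+a_{i_{l2}j_1}$ with equal probability, and these two values differ by exactly~$1$. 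Because the pairs in~$\cI$ are disjoint and there are~$\ge\a_1 n$ of them, a conditional-independence argument (together with a standard estimate for the smoothness of a sum of independent $\pm$-type Bernoulli variables, cf.\ \cite[Lemma~4.1]{Rollin2015} or the Mineka-coupling bound) yields~$S_2(\law(W\mid\%F_2)) = \bigo(n^{-1}) = \bigo(\s^{-2})$, using Assumption~A1 to convert between~$n$ and~$\s^2$. Hence~$\U = \bigo(1)$, and this combined with part~(i) of Corollary~\ref{17} already gives~$\dtv = \bigo(\s^{-1})$.

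**The local bound and the term~$T$.** For the~$\dloc$ estimate I would invoke part~\ref{cor2} of Corollary~\ref{17}. Condition~\ref{condi1} is met with~$\a = 1$: the total variation rate~$\d = \bigo(\s^{-1})$ was just established, and~$S_2(\law(W)) = \bigo(\s^{-2})$ follows from the unconditional version of the smoothness argument above (again via Assumption~A2, now with no conditioning). Condition~\ref{condii1} on moments of~$\s^{-1}|W-\mu|$ holds at every order by the concentration bound. The remaining issue is decomposition~\eqref{10}: here~$\Psi = \babs{\IE[GD\mid\%F_1] - \IE[GD]}$, and for the random-transposition coupling~$GD$ is (up to constants) a quadratic form in the~$a$'s evaluated at~$\ppi$, so~$\Psi$ is of the shape~$\frac{1}{n}\babs{\sum_{i}\ha_{i\ppi_i}^2 - \s^2}$ plus lower-order pieces --- the variance-type quantity that appears ubiquitously in Berry-Esseen proofs for this statistic. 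Bounding it by~$\s\k + T$ with~$k=0$ and~$T$ this centred sum, one has~$T$ a sum of~$n$ bounded terms with a dependence structure of bounded range under~$\ppi$; by the same conditional-independence/concentration ideas one gets an exponential tail~$\eps(t)\le e^{-ct}$ in the sense of Lemma~\ref{lem4}. That puts us in case~\ref{condiiib1}, which replaces~$\bigo(\s^{-2}(\U+1))$ by~$\bigo(\s^{-2}(\U+1)\log\s)$; since~$\U=\bigo(1)$ this is~$\bigo(\s^{-2}\log\s)$, and the claimed~$\bigo(\s^{-2}\sqrt{\log\s})$ follows if the tail is in fact Gaussian, $\eps(t)\le e^{-t^2/2c}$, which is what a bounded-differences concentration inequality for~$T$ actually delivers.

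**Main obstacle.** The routine parts --- the coupling, the moment/concentration bounds, the bookkeeping with Assumption~A1 --- are standard. The delicate step is making the conditional smoothness bound~$S_2(\law(W\mid\%F_2)) = \bigo(\s^{-2})$ rigorous: one must choose the conditioning sigma-algebra~$\%F_2$ carefully so that it is measurable with respect to the coupling data~$(G,D)$ (which is essentially~$I$, $J$, and the swapped values), yet still leaves enough genuinely random, (conditionally) independent, aperiodic two-point structure --- provided by the~$\ge\a_1 n$ disjoint pairs of Assumption~A2 --- to drive the smoothness estimate down to order~$n^{-1}$. Equivalently, one must verify that the embedded sign flips are conditionally independent given~$\%F_2$ and that the permutation constraint (that~$\ppi$ be a bijection) does not destroy this; handling the interaction between the bijectivity constraint and the pair decomposition, likely via a Mineka-type coupling on the conditional law, is the technical crux.
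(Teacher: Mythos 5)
Your overall skeleton is the paper's: apply Corollary~\ref{17} to an exact Stein coupling for~$W$, verify~\eq{10} with~$T$ a centred permutation statistic handled by concentration (choosing~$t\asymp\sqrt{\log\s}$ in Lemma~\ref{lem4} to get the~$\sqrt{\log\s}$ factor), and bound~$\U$ by showing~$S_2(\law(W\given\%F_2))$ is of order~$n^{-1}$ via Assumption~A2, embedding i.i.d.\ Bernoulli transpositions of the disjoint pairs in~$\cI$ that avoid the coupling indices; the "crux" you flag (bijectivity versus conditional independence) is exactly what the paper's Lemma~\ref{32} resolves, by conditioning on~$(I,J,\ppi_I,\ppi_J)$, flipping only pairs disjoint from~$\{I,J\}$, comparing with~$S_2(\Bi(k_n,1/2))$, and using Chebyshev to control the number of usable pairs --- note the paper only needs (and only proves) the bound in expectation, not almost surely as you assert.

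The genuine gap is in your treatment of~$\Psi$. For the symmetric transposition coupling you choose, $G\asymp n(W'-W)$ (so your claim that~$G$ and hence~$GD(D-1)$ are~$\bigo(1)$ is false, though harmless since~$|GD(D-1)|\le Cn$ still pairs with~$S_2=\bigo(n^{-1})$ to give~$\U=\bigo(1)$), and~$\IE[GD\given\ppi]$ is proportional to~$n^{-1}\sum_{i,j}(a_{i\ppi_j}+a_{j\ppi_i}-a_{i\ppi_i}-a_{j\ppi_j})^2$. Expanding this produces, besides the terms you name ($n^{-1}$ times~$\sum_i a_{i\ppi_i}^2$-type sums, $(W-\mu)$-linear and~$(W-\mu)^2/n$ pieces), the cross term~$n^{-1}\sum_{i,j}a_{i\ppi_j}a_{j\ppi_i}$, which is a genuinely quadratic permutation statistic of the \emph{same} order as the others, not a "lower-order piece." Its concentration is not delivered by the linear-statistic inequalities (\cite[Proposition~1.1]{Chatterjee2007}, \cite[Theorem~3.1]{Goldstein2014}) that suffice for sums of the form~$\sum_i\tilde a_{i\ppi_i}$; you would need a separate argument (e.g.\ a transposition bounded-differences inequality, with per-swap increments of order~$n$) and you neither identify the term nor supply such a bound. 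This is precisely why the paper discards the symmetric exchangeable pair and instead uses the coupling~$W'=W-a_{I\ppi_I}-a_{J\ppi_J}$, $G=n(a_{I\ppi_J}-a_{I\ppi_I})$, for which~$\IE[GD\given\ppi]$ decomposes into~$2n^{-1}\mu(W-\mu)$ plus linear statistics~$T_1,T_2,T_3$ and~$T_4=n^{-1}(W-\mu)^2$, all of which the quoted concentration results handle directly. As written, your verification of~\eq{10} and of the tail condition~\ref{condiiib1} therefore does not go through without either switching couplings or adding a concentration estimate for the quadratic term.
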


\subsection{Isolated vertices in the \ER\ random graph}

We show a local limit bound with optimal rate
for~$W$ defined to be the number 
of isolated vertices in an \ER\ graph on~$n$ vertices with edge probability~$p\sim \lambda/n$ for some $\lambda>0$. Note that
\be{
   \mu \Def \IE W = n (1-p)^{n-1};\quad  \sigma^2 \Def \Var(W) = n(1-p)^{n-1}\bkle{1+(np-1)(1-p)^{n-2}},
}
and so in the regime~$p\sim\lambda/n$, we have~$\mu \sim ne^{-\la}$ and~$\sigma^2 \sim ne^{-\la}\{1 + (\la-1)e^{-\la}\}$ are of strict order~$n$. 

Studying degree and subgraph count statistics to understand the structure of \ER\ graphs has a long history, and is still an active area to this day; e.g., \cite{Krokowski2017} and \cite{Rollin2017}.
A number of works derive central limit theorems with error rates for isolated degrees.
Error rates for smooth test function metrics are provided by
 \cite{Barbour1989} and \cite{Kordecki1990}; for
Kolmogorov distance by \cite{Goldstein2013a}; and for total variation
distance (to a discretized normal) by \cite{Fang2014}.
We show the following optimal local limit theorem that 
strengthens the rate provided in \cite{Rollin2015}.
\begin{theorem}
Let~$W=W_n$ be the number of isolated vertices in an \ER\ graph on~$n$ vertices with edge probability 
$p\sim \lambda/n$. Then
\be{
\dloc\left(\law(W),\TP\bklr{\IE W, \Var(W)}\right) = \bigo\left(\frac{\sqrt{\log(\sigma)}}{\sigma^2}\right).
}
\end{theorem}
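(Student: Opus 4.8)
The plan is to apply Corollary~\ref{17}, part~(ii), in the sub-Gaussian strengthening discussed in the remark preceding it, to the \emph{size-bias} Stein coupling of $W=\sum_{i=1}^n X_i$, where $X_i\Def\I[\deg(i)=0]$. Let $I$ be uniform on $[n]$ and independent of the \ER\ graph~$G$, delete all edges at~$I$, and put $W^s\Def1+\sum_{j\ne I}\I[j\text{ isolated in }G\setminus I]$; then $(W,W',G)\Def(W,W^s,\mu)$ is a Stein coupling, so $R=0$. Writing $N_i$ for the number of pendant neighbours of~$i$ (vertices whose unique neighbour is~$i$), one checks that $D\Def W'-W=(1-X_I)+N_I\ge0$ and that $\sum_i N_i=D_1$, the number of degree-$1$ vertices. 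Taking $\%F_1=\sigma(G)$,
\[
  \IE[GD\giv\%F_1]-\IE[GD]=\mu\bigl(\IE[D\giv G]-\IE D\bigr)=\tfrac\mu n\bigl((D_1-\IE D_1)-(W-\mu)\bigr),
\]
so, with $W_{\le1}\Def W+D_1$ the number of vertices of degree at most~$1$,
\[
  \Psi=\tfrac\mu n\babs{(W_{\le1}-\IE W_{\le1})-2(W-\mu)}\le\tfrac{2\mu}{n}\abs{W-\mu}+\tfrac\mu n\babs{W_{\le1}-\IE W_{\le1}},
\]
which is assumption~\eq{10} with $k=1$, $\kappa=\mu/n=\bigo(1)$ and $T\Def\tfrac\mu n\abs{W_{\le1}-\IE W_{\le1}}$.

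Next I would verify the hypotheses of Corollary~\ref{17}(ii). Since $W$ and $W_{\le1}$ are sums of $\{0,1\}$-valued functions that change by $\bigo(1)$ under a single edge flip on the event that the maximum degree of~$G$ is $\bigo(\log n)$ (an event of probability $1-n^{-\Omega(1)}$, with the implied constant as large as we like), a concentration inequality of ``typical bounded differences'' type gives $\IP(\abs{W_{\le1}-\IE W_{\le1}}\ge s)\le Ce^{-cs^2/n}$; as $\sigma^2=\Theta(n)$ and $\mu/n=\bigo(1)$ this makes $T/\sigma$ sub-Gaussian at scale~$1$, so $T$ satisfies~\eq{16} with $\eps(t)\le C'e^{-c't^2}$ and $\int_1^\infty\eps<\infty$ uniformly in~$n$; the same bounded-differences property makes all moments of $(W-\mu)/\sigma$ bounded (condition~(2) holds with $K=3$), gives $\IE T=\bigo(\sqrt{\Var W_{\le1}})\,\bigo(1)=\bigo(\sigma)$, and $\IE R^2=0$. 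For condition~(1) with $\a=1$ I use $\delta\Def\dtv(\law(W),\TP(\mu,\sigma^2))=\bigo(\sigma^{-1})$, which follows from part~(i) of Corollary~\ref{17} once $\Upsilon=\bigo(1)$ is known, together with $S_2(\law(W))=\bigo(\sigma^{-2})$; the latter is the standard a priori smoothness of the isolated-vertex count, cited from \cite{Rollin2015} or re-derived by conditioning on all edges outside a fixed linear-size matching and noting that a matched pair contributes a genuine $\pm1$ fluctuation to~$W$ exactly when precisely one of its two vertices is conditionally isolated, the number of such pairs being $\Theta(n)$ with probability $1-\bigo(\sigma^{-2})$, so that $S_2(\law(W))\le\IE S_2(\law(W\giv\%G))=\bigo(\sigma^{-2})$.

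It remains to bound~$\Upsilon$. Take $\%F_2$ generated by~$I$ together with all edges incident to~$I$ or to a neighbour of~$I$, so that~$D$ is $\%F_2$-measurable; conditionally on~$\%F_2$, $W$ is a constant plus an isolated-vertex count in an induced \ER\ graph on $n-\bigo(\mathrm{polylog}\,n)$ ``active'' vertices, so the matching argument again gives $S_2(\law(W\giv\%F_2))=\bigo(\sigma^{-2})$ off an event of probability $n^{-\Omega(1)}$, on which the bound $S_2\le4$ is used. Since $\abs{GD(D-1)}=\mu D(D-1)\le\mu D^2$ with $\IE D^{2m}=\bigo(1)$ for every~$m$ (because $D\le1+\deg(I)$ and $\deg(I)$ has uniformly bounded moments), and $\mu/\sigma^2=\bigo(1)$, splitting the expectation over that event and its complement gives $\Upsilon=\bigo(1)$. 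Feeding all of this into Corollary~\ref{17}(ii) --- the $R$-terms vanish, the $\Upsilon$-term is $\bigo(\sigma^{-2})$, Lemma~\ref{lem3} controls the second term of~\eq{12} since $K=3$ moments suffice, $\IE T^2=\bigo(\sigma^2)$, and Lemma~\ref{lem4} with $t\asymp\sqrt{\log\sigma}$ gives $\sup_a\IE[T\I[W=a]]=\bigo(\sqrt{\log\sigma})$ using $\sup_a\IP(W=a)=\bigo(\sigma^{-1})$ --- yields $\dloc(\law(W),\TP(\mu,\sigma^2))=\bigo(\sigma^{-2}\sqrt{\log\sigma})$.

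The one genuinely nontrivial input is the concentration estimate just used: sub-Gaussian concentration, at the natural scale~$\sqrt n$, of the degree-$\le1$ vertex count (and, for the smoothing and $\Upsilon$ arguments, of the ``good pair'' count). A direct McDiarmid bound over the $\binom n2$ edge variables, or a vertex-exposure Azuma bound, is far too weak here, because a single vertex can a priori have degree of order~$n$ and the worst-case bounded-difference constants then scale like~$n$ rather than~$\bigo(1)$. The remedy is a concentration inequality (Talagrand-type, or the method of bounded differences holding only with high probability) that uses the $\bigo(1)$ increment bound on the large-probability event that~$G$ has small maximum degree, so that the sum of squared increments is $\bigo(n)$; this is exactly where the sparsity $p\sim\lambda/n$ is essential, and I expect it to be the main technical step. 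Everything else is a routine application of Theorem~\ref{thm1} and its corollaries.
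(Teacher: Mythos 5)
Your proposal has the same skeleton as the paper's proof: the size-bias coupling $(W,W^s,\mu)$ with a uniformly chosen vertex isolated, verification of~\eq{10} with $k=1$, $\kappa=\bigo(1)$ and $T$ a centred count of low-degree vertices (the paper uses $T=|W_1-\IE W_1|$, the degree-one count, which is your $W_{\le1}$ decomposition up to absorbing the extra $|W-\mu|$ into the first term), $S_2(\law(W))=\bigo(\sigma^{-2})$ cited from \cite{Rollin2015}, $\Upsilon=\bigo(1)$ via a conditional smoothness bound given a local sigma-algebra $\%F_2$, and Corollary~\ref{17}\emph{(ii)} with $t\asymp\sqrt{\log\sigma}$ in Lemma~\ref{lem4}. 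The two places you diverge are exactly the two technical inputs. For concentration, what you flag as ``the main technical step'' is unnecessary: the paper simply invokes the bounded size-bias coupling concentration inequality of \cite{Bartroff2015} (restated as Theorem~\ref{thm2}), which gives a Bernstein-type tail $2\exp\{-t^2/(4(n-\IE W_d)+(4/3)t)\}$ for \emph{every} degree-$d$ count; applied with $d=0$ it bounds all moments of $\sigma^{-1}|W-\mu|$, and with $d=1$ it gives the tail for $T$, so no Talagrand or typical-bounded-differences machinery is needed (and it is the natural tool here, since the size-bias coupling is already in play). Your alternative is plausible but unproved as written, whereas the citation closes it immediately. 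For $\Upsilon$, the paper does not rerun a matching argument conditionally; it conditions on $(I,\cN_1^I,\cN_2^I)$ and all edges meeting $\{I\}\cup\cN_1^I$, splits at $|\cN_1^I|\le\sqrt n$, and on that event bounds $S_2(\law(W\given\%F_2))$ by the two-step Markov chain estimate \cite[Theorem~3.7]{Rollin2015}, computing the one- and two-step transition probabilities of the edge-resampling chain explicitly as in \cite{Fang2014}; this explicit computation is the bulk of the paper's work, and your one-line ``the matching argument again gives $S_2=\bigo(\sigma^{-2})$ off a small event'' is where comparable care would be required (in particular your description of $\law(W\given\%F_2)$ as an isolated-vertex count of an induced \ER\ graph is slightly off: it is the count, over the unexplored vertices not already touched, of those with no edges in the remaining graph). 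With those two steps filled in as the paper does, your argument delivers the stated $\bigo(\sigma^{-2}\sqrt{\log\sigma})$ bound.
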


\subsection{Magnetization in the Curie--Weiss model}

The Curie--Weiss model on~$n$ sites is given by a Gibbs measure on~$\{-1,+1\}^n$ having parameters~$\beta>0$ 
and~$h\in\IR$. The random vector~$S=(S_1,\ldots, S_n)\in\{-1,+1\}^n$ has this distribution if 
\ben{\label{20}
    \IP\bklr{S=(s_1,\ldots,s_n)} = Z_{\beta,h}^{-1}\exp\left\{\frac{\beta}{n} 
                          \sum_{1\leq i< j\leq n} s_i s_j + h \sum_{i=1}^n s_i\right\}.
}
The \emph{magnetization}~$W=\sum_{i=1}^n S_i$ of the system has been the object of intense study over the last forty 
years or more; see~\cite[IV.4.]{Ellis2006}. By symmetry, we only need consider ~$h\geq0$.

We first state the law of large numbers for~$W/n$, which relies on the following equation for fixed~$\beta>0$, $h\geq0$:
\ben{\label{21}
        m = \tanh(\beta m+h).
}
For~$h> 0$, there is only one {\em positive\/} solution~$m_h$ satisfying~\eq{21}.
If~$h=0$ and~$0<\beta<1$, $m_0 = 0$ is the only solution to~\eq{21}.

 \begin{lemma} (\cite[Theorem~IV.4.1]{Ellis2006})
If~$S$ is distributed as~\eq{20} for some~$h>0$ and~$\beta>0$ or for~$h=0$ and~$0<\beta<1$, 
and if~$W=\sum_{i=1}^n S_i$, then as~$n\to\infty$,
\be{
   \frac{W}{n}\displayrelskip\stackrel{prob}{\longrightarrow}\displayrelskip m_h.
}
\end{lemma}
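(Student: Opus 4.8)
The plan is to turn this into a one-dimensional Laplace asymptotic problem. First I would note that, since $s_i^2=1$, one has $\sum_{i<j}s_is_j=\tfrac12(W^2-n)$, so the Gibbs weight in~\eqref{20} depends on $(s_1,\dots,s_n)$ only through $W=\sum_i s_i$. Grouping the configurations by the value $w\in\{-n,-n+2,\dots,n\}$ of $W$ then gives
\be{
  \IP(W=w) = Z_n^{-1}\binom{n}{(n+w)/2}\exp\bbbklr{\frac{\beta}{2n}\,w^2+hw},
}
with $Z_n$ a normalising constant. Writing $w=nm$ and applying Stirling's formula to the binomial coefficient, one obtains, uniformly over $m$ in compact subsets of $(-1,1)$,
\be{
  \tfrac1n\log\IP(W=nm) = \phi(m)-\max_{m'\in[-1,1]}\phi(m')+\lito(1),
}
where $\phi(m)\Def-\tfrac{1+m}2\log\tfrac{1+m}2-\tfrac{1-m}2\log\tfrac{1-m}2+\tfrac{\beta}2 m^2+hm$ is the entropy of a $\pm1$ proportion plus the energy; for the conclusion only the uniform upper bound here, together with a matching lower bound at the eventual maximiser (to pin down $Z_n$), is needed.

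Next I would locate the critical points of $\phi$. A direct computation gives $\phi'(m)=-\operatorname{arctanh}(m)+\beta m+h$, so $\phi'(m)=0$ is equivalent to $\operatorname{arctanh}(m)=\beta m+h$, that is, to the mean-field equation~\eqref{21}, $m=\tanh(\beta m+h)$. I would also record the book-keeping facts $\phi'(0)=h$, $\phi'(m)\to-\infty$ as $m\to1^-$, $\phi'(m)\to+\infty$ as $m\to-1^+$, $\phi''(m)=\beta-(1-m^2)^{-1}$, and the reflection identity $\phi(a)-\phi(-a)=2ha$ (the entropy term and the $\beta$-term being even in $m$); and the observation that $\sgn\phi'(m)=\sgn\bklr{\tanh(\beta m+h)-m}$, since $\tanh$ is increasing.

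Then I would pin down the unique global maximiser of $\phi$ on $[-1,1]$ under each hypothesis. If $h=0$ and $0<\beta<1$, then $\phi'(m)=\beta m-\operatorname{arctanh}(m)<0$ for all $m\in(0,1)$, because $\operatorname{arctanh}(m)\ge m>\beta m$; hence $\phi$ is strictly decreasing on $[0,1]$ and, being even, strictly increasing on $[-1,0]$, so $m_0=0$ is the unique maximiser and the unique solution of~\eqref{21}. If $h>0$, set $g(m)\Def\tanh(\beta m+h)-m$; then $g(0)=\tanh h>0$, $g(1)<0$, and $g'(m)=\beta\operatorname{sech}^2(\beta m+h)-1$ is strictly decreasing on $(0,\infty)$, so $g$ is either strictly decreasing on $[0,1]$ or first strictly increasing then strictly decreasing, and in either case changes sign exactly once on $(0,1)$, at a point $m_h$. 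By the sign rule above, $\phi$ is then strictly increasing on $[0,m_h]$ and strictly decreasing on $[m_h,1]$, so $m_h$ maximises $\phi$ over $[0,1]$; and for every $m'\in[-1,0)$ the reflection identity gives $\phi(m_h)\ge\phi(|m'|)=\phi(m')+2h|m'|>\phi(m')$, so $m_h$ is the unique global maximiser. For the finer structure of the solution set of~\eqref{21} — in particular that $m_h$ is its unique positive root — I would defer to~\cite[Theorem~IV.4.1]{Ellis2006}.

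Finally, uniqueness of the maximiser together with continuity of $\phi$ on the compact interval $[-1,1]$ gives, for each fixed $\eps>0$, $\delta\Def\max_{[-1,1]}\phi-\max_{|m-m_h|\ge\eps}\phi>0$, and a union bound over the at most $n+1$ possible values of $W$ yields $\IP(|W/n-m_h|\ge\eps)\le(n+1)\e^{-n\delta/2}$ for all large $n$, which tends to $0$; hence $W/n\to m_h$ in probability. I expect the only genuinely delicate point to be the uniqueness of the global maximiser when $h>0$ and $\beta$ is large, where~\eqref{21} may acquire further roots; those extra roots are negative, and it is precisely the sign identity $\phi(a)-\phi(-a)=2ha>0$ for $a>0$ that rules them out as maximisers — which is also why the statement asserts uniqueness only of the positive solution $m_h$.
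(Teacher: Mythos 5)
Your proof is correct. Note first that the paper itself does not prove this lemma at all: it simply cites \cite[Theorem~IV.4.1]{Ellis2006}, where the result is obtained from the large deviation principle for the magnetization, with rate function given (up to a constant) by the negative of your $\phi$. Your argument is a self-contained, finite-$n$ version of exactly that mechanism: reduce the Gibbs weight to a function of $W$ alone via $\sum_{i<j}s_is_j=\tfrac12(W^2-n)$, apply Stirling, identify the critical points of $\phi$ with the solutions of the mean-field equation~\eq{21}, show $m_h$ is the unique global maximiser (your sign analysis of $g(m)=\tanh(\beta m+h)-m$ for $h>0$, evenness plus $\operatorname{arctanh}(m)>\beta m$ for $h=0$, $0<\beta<1$, and the reflection identity $\phi(a)-\phi(-a)=2ha$ to exclude negative candidates), and finish with a union bound. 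What the citation buys the paper is brevity and the finer structure of the solution set of~\eq{21}; what your route buys is an elementary, explicit exponential concentration bound, which is more than the lemma asserts. Two small remarks: your deferral to Ellis for uniqueness of the positive root is not actually needed, since any solution of~\eq{21} lies in $(-1,1)$ and your single-sign-change argument already gives uniqueness on $(0,1)$; and for the union bound you should not rely on Stirling uniformly only on compacts of $(-1,1)$ --- rather use the exact, global entropy bound $\binom{n}{k}\leq \exp\{nH(k/n)\}$ for the upper estimate on every $\IP(W=nm)$ (including $m$ near $\pm1$), together with the Stirling lower bound only at the lattice point nearest $nm_h$ to control $Z_n$; since $\phi'(m)\to\mp\infty$ at $m\to\pm1^{\mp}$, the maximiser is interior and the resulting $\delta>0$ argument goes through as you wrote it.
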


\noindent We then have the following distributional convergence result from \cite[Theorem~2.2]{Ellis1980}.
\begin{theorem}
If~$S$ is distributed as~\eq{20} for some~$h>0$ and~$\beta>0$ or for~$h=0$ and~$0<\beta<1$, 
and if~$W=\sum_{i=1}^n S_i$, then as~$n\to\infty$,
\be{
	\law\left(\frac{W-nm_h}{\sqrt{n}}\right)
	\displayrelskip\to\displayrelskip\N\left(0, \frac{1-m_h^2}{1-\beta+\beta m_h^2}\right).
}
\end{theorem}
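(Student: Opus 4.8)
This is the classical central limit theorem for the Curie--Weiss magnetisation in the high-temperature / non-zero-field regime, due to \cite{Ellis1980}, and the natural plan is to reconstruct their Hubbard--Stratonovich argument, which reduces the statement to a one-dimensional Laplace (saddle-point) asymptotic. Since $s_i^2=1$, the interaction in \eq{20} is $\frac{\beta}{n}\sum_{i<j}s_is_j=\frac{\beta}{2n}W^2-\frac{\beta}{2}$, so $\IP(S=s)\propto\exp\{\tfrac{\beta}{2n}W(s)^2+hW(s)\}$. First I would linearise the square via $e^{\beta W^2/(2n)}=(2\pi)^{-1/2}\int_{\IR}e^{-t^2/2+\sqrt{\beta/n}\,tW}\,dt$ and sum over $s\in\{-1,+1\}^n$, whereupon the spins decouple; after the substitution $t=\sqrt{\beta n}\,x$ the characteristic function of $W$ becomes a ratio of Laplace-type integrals carrying a complex $\bigo(n^{-1/2})$ perturbation of the parameter,
\[
   \IE e^{i\theta W/\sqrt n}
   =\frac{\int_{\IR}e^{n\psi_\theta(x)}\,dx}{\int_{\IR}e^{n\psi_0(x)}\,dx},
   \qquad \psi_\theta(x)=-\tfrac{\beta x^2}{2}+\log\bigl(2\cosh(\beta x+h+i\theta/\sqrt n)\bigr).
\]

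The second step is the saddle-point analysis of $\psi_0$. One has $\psi_0'(x)=-\beta x+\beta\tanh(\beta x+h)$, so its critical points are exactly the solutions of \eq{21}, and $\psi_0''(x)=-\beta+\beta^2\operatorname{sech}^2(\beta x+h)$, whence $\psi_0''(m_h)=-\beta(1-\beta+\beta m_h^2)$. This is strictly negative precisely in the two regimes of the hypothesis: when $h=0$ and $\beta<1$ the function $\psi_0$ is globally strictly concave (because $\operatorname{sech}^2\le 1$), so $m_0=0$ is the unique global maximiser, while when $h>0$ the standard analysis of \eq{21} shows that $m_h$ is the unique global maximiser and that $1-\beta+\beta m_h^2>0$. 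I would then expand $\psi_\theta$ about the maximiser, writing $x=m_h+y/\sqrt n$ and using $\log\cosh(a+\eps)=\log\cosh a+\eps\tanh a+\tfrac12\eps^2\operatorname{sech}^2 a+\bigo(\eps^3)$ with $a=\beta x+h$, $\eps=i\theta/\sqrt n$, together with $\tanh(\beta m_h+h)=m_h$ and $\operatorname{sech}^2(\beta m_h+h)=1-m_h^2$, to obtain, up to $\lito(1)$,
\[
   n\psi_\theta(m_h+y/\sqrt n)=n\psi_0(m_h)+i\theta\sqrt n\,m_h
     +\tfrac12\psi_0''(m_h)\,y^2+i\theta\beta(1-m_h^2)\,y-\tfrac{\theta^2}{2}(1-m_h^2).
\]

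The third step is to carry out the Gaussian integration in $y$ and take the ratio: the factor $e^{n\psi_0(m_h)}$ cancels, the factor $e^{i\theta\sqrt n\,m_h}$ accounts for the centring, and dividing $\int_{\IR}e^{\frac12\psi_0''(m_h)y^2+i\theta\beta(1-m_h^2)y}\,dy$ by its value at $\theta=0$ leaves a Gaussian characteristic function; after the identity $(1-m_h^2)+\frac{\beta(1-m_h^2)^2}{1-\beta+\beta m_h^2}=\frac{1-m_h^2}{1-\beta+\beta m_h^2}$ this reads
\[
   \IE e^{i\theta(W-nm_h)/\sqrt n}\ \longrightarrow\ \exp\Bigl\{-\tfrac{\theta^2}{2}\cdot\tfrac{1-m_h^2}{1-\beta+\beta m_h^2}\Bigr\},
\]
and L\'evy's continuity theorem concludes. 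The main obstacle is making the Laplace step rigorous: one must show the two integrals concentrate near $x=m_h$, i.e.\ that the mass of $\{|x-m_h|>\delta\}$ is exponentially negligible, which follows from $\psi_0(x)\to-\infty$ as $|x|\toinf$ together with the uniqueness of the global maximum established above; and one must control the complex shift, for which the elementary bound $|\cosh(a+i\eps)|\le\cosh a$ gives $|e^{n\psi_\theta(x)}|\le e^{n\psi_0(x)}$ and hence reduces all tail estimates to the real case. The one genuinely model-specific input is that in these regimes $1-\beta+\beta m_h^2>0$ and $m_h$ is the unique maximiser of $\psi_0$.

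An alternative more in the spirit of the present paper would be to use an exchangeable pair $(W,W')$ obtained by resampling a uniformly chosen spin from its conditional law: a short computation gives $\IE[W'-W\mid S]=-\tfrac1n\bigl(W-n\tanh(\tfrac\beta n W+h)\bigr)+\bigo(1/n)$, and linearising $\tanh$ about $m_h$ (using $m_h=\tanh(\beta m_h+h)$) produces the linearity condition \eq{5} with $a\asymp\tfrac1n(1-\beta+\beta m_h^2)$ and a remainder $R$ controlled by the a priori concentration $W/n\to m_h$, after which a standard exchangeable-pair central limit theorem delivers the same Gaussian limit quantitatively (and, fed into Theorem~\ref{thm1}, the corresponding local limit statement).
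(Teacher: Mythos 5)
Your proposal is correct: the Hubbard--Stratonovich linearisation followed by a Laplace/saddle-point analysis at the unique global maximiser $m_h$ (with $\psi_0''(m_h)=-\beta(1-\beta+\beta m_h^2)<0$, and the tail and complex-shift estimates as you indicate) is exactly the classical argument of \cite{Ellis1978} and \cite{Ellis1980}, and your quadratic expansion and the variance identity $(1-m_h^2)+\beta(1-m_h^2)^2/(1-\beta+\beta m_h^2)=(1-m_h^2)/(1-\beta+\beta m_h^2)$ check out. The paper itself gives no proof of this statement but simply quotes \cite[Theorem~2.2]{Ellis1980}, so your reconstruction coincides with the cited proof; your alternative exchangeable-pair sketch is essentially the coupling the paper later uses to derive the quantitative total-variation and local limit refinements.
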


\noindent Above the critical temperature,  a convergence rate of order~$\bigo(n^{-1/2})$ in Kolmogorov 
distance is a consequence 
of \cite[Theorem~3 and pp.~602--605]{Barbour1980}; see also \cite{Chatterjee2011a} and~\cite{Eichelsbacher2010}.
Concentration inequalities are derived in \cite{Chatterjee2007} and~\cite{Chatterjee2010}; 
total variation and local limit bounds (that are weaker than those obtained below) are given in~\cite{Rollin2015}.
Note also that, for~$\mu_n\Def \IE W$ and~$\sigma_n^2\Def \Var(W)$, ~$\mu_n \sim n m_h$ and
$\sigma_n^2 \sim n (1-m_h^2)/(1-\beta+\beta m_h^2)$ as~$n\to\infty$.

Our main result for the magnetization is a sharp rate of convergence in the local limit metric.
Note that~$W$ sits on a lattice of span~$2$, so we ultimately shift and scale to put it on~$\{0,\ldots,n\}$.
\begin{theorem}
Let~$S$ be distributed as~\eq{20} for some~$h>0$ and~$\beta>0$ or for~$h=0$ and~$0<\beta<1$, 
$W=W_n=\sum_{i=1}^n S_i$, and~$\tW \Def  (W+\tfrac12\{1-(-1)^n\})/2$. Then in the notation above,
\ba{
\dtv\left(\law\left(\tW\right), 
   	\TP\left(\frac{nm_h}{2},\frac{n(1-m_h^2)}{4(1-\beta+\beta m_h^2)}\right)\right)
		&=\bigo(\s_n^{-1})=\bigo(n^{-1/2}), \\
	\dloc\left(\law\left(\tW\right), 
                         \TP\left(\frac{nm_h}{2},\frac{n(1-m_h^2)}{4(1-\beta+\beta m_h^2)}\right)\right)
		&=\bigo(\s_n^{-2})=\bigo(n^{-1}).
}
\end{theorem}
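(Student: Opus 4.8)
The plan is to apply Corollary~\ref{17} to the (shifted and scaled) magnetization $\tW$, using the exchangeable pair that arises from Glauber dynamics on the spin system. First I would set up the coupling: let $S'$ be obtained from $S$ by picking a coordinate $I$ uniformly at random and resampling $S_I$ from its conditional distribution given the other spins. This gives an exchangeable pair $(W,W')$ with $D = W'-W \in \{-2,0,2\}$, and a standard computation (as in \cite{Chatterjee2011a} or \cite{Eichelsbacher2010}) shows the linearity condition $\IE[W'-W\mid W] = -a(W-\mu) + aR$ holds with $a$ of order $1/n$ and a remainder $R$ that, above the critical temperature, is small --- specifically $\IE R^2 = \bigo(1)$, because $h>0$ or $\beta<1$ keeps us away from the degenerate fixed-point behaviour. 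After rescaling $\tW = (W + \tfrac12\{1-(-1)^n\})/2$ to land on the integers, $D$ becomes $\{-1,0,+1\}$-valued, so Remark~\ref{rem1} applies: $\U = 0$ and $\Psi = a^{-1}\abs{\IP(D=1\mid\%F_1) - \IP(D=1)}$ with $\%F_1 = \sigma(W)$.

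The core estimate is then to bound $\Psi$ in the form \eqref{10}. Here $\IP(D=1\mid W)$ is an explicit function of $W/n$ (it is, up to the resampling probabilities, essentially $\tfrac12(1 - \tanh(\beta(W/n) + h) \cdot \text{sign stuff})$), and $\IP(D=1)$ is its expectation; so $\Psi$ is $a^{-1}$ times the fluctuation of a smooth function of $W/n$ around its mean. Expanding that function to first order around $m_h$ and using $|W - nm_h|$ control, one gets $\Psi \le \sigma\kappa(1 + \abs{W-\mu}/\sigma) + T$ with $k=1$ and a remainder $T$ coming from the second-order Taylor term, which is of order $(W-\mu)^2/n = \bigo(1) \cdot (\abs{W-\mu}/\sigma)^2$ in expectation but, crucially, is a bounded-times-$\sigma$ random variable only after truncation. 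To get the sharp local rate I would verify the exponential-tail hypothesis \ref{condiiib1} (or, better, the uniform boundedness \ref{condiii1}) for $T/\sigma$: concentration of the magnetization above criticality (\cite{Chatterjee2007,Chatterjee2010}) gives $\IP(\abs{W-nm_h} \ge t\sqrt n) \le e^{-ct^2}$, so $T/\sigma$ has sub-Gaussian tails and Lemma~\ref{lem4} with exponentially decaying $\eps$ applies, costing only a $\log\sigma$ factor --- but in fact, since $T \le C\sigma^2/n \cdot (\text{bounded})$ is not literally bounded, one should localize: on the event $\{\abs{W-\mu} \le \sigma\log\sigma\}$, $T/\sigma$ is bounded, and off it the probability is negligible. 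This is the step I expect to require the most care.

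To close the argument via Corollary~\ref{17}\ref{cor2}, I would also need the input hypotheses: that $\dtv(\law(\tW),\TP) = \bigo(\sigma^{-\alpha})$ and $S_2(\law(\tW)) = \bigo(\sigma^{-1-\alpha})$ for some $\alpha \in (1/2,1]$, and that enough moments of $\abs{\tW - \mu}/\sigma$ are bounded. The moment bounds follow immediately from the sub-Gaussian concentration just mentioned. The total variation bound $\bigo(\sigma^{-1})$ is the first claim of the theorem and is obtained from part (i) of the corollary (with $\U = 0$, $\IE R^2 = \bigo(1)$, $\IE T = \bigo(\sigma)$, and the $k=1$ moment of $\abs{W-\mu}/\sigma$ bounded); feeding this back with $\alpha = 1$ then gives $S_2(\law(\tW)) = \bigo(\sigma^{-2})$ via \cite[Lemma~4.1]{Rollin2015}-type interpolation (as invoked inside the proof of Lemma~\ref{lem3}), and conditions \ref{condi1}--\ref{condiii1} of Corollary~\ref{17}\ref{cor2} are all met. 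Since $\U = 0$, the local bound in \eqref{18} becomes exactly $\bigo(\sigma^{-2}) = \bigo(n^{-1})$, with no log factor, provided the localization of $T$ in the previous paragraph yields genuine boundedness of $T/\sigma$ rather than merely an exponential tail; checking that carefully is the one place where the optimal (log-free) rate could be lost, and is the main obstacle.
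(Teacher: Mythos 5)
Your overall route --- the Gibbs-sampler exchangeable pair, Remark~\ref{rem1} with $\U=0$, and Corollary~\ref{17} --- is the same as the paper's, but there is a genuine gap in how you verify the hypotheses of Corollary~\ref{17}\emph{(ii)}. You claim that $S_2(\law(\tW))=\bigo(\sigma_n^{-2})$ can be obtained by ``feeding back'' the total variation bound through a Lemma~\ref{lem3}-type interpolation. That cannot work: the interpolation inequality used inside Lemma~\ref{lem3} takes $S_2(\law(W))$ as an \emph{input}, and Lemma~\ref{lem2} only controls $S_2$ of the translated Poisson law, not of $\law(\tW)$. The only generic consequence of $\dtv\bklr{\law(\tW),\TP}=\bigo(\sigma_n^{-1})$ is $S_2(\law(\tW))\le S_2(\TP)+4\dtv=\bigo(\sigma_n^{-1})$, i.e.\ $\a=0$, which fails the requirement $\a>1/2$ in condition~(1). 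A separate smoothness estimate for the magnetization itself is required; the paper imports it directly from \cite[Lemma~4.4]{Rollin2015} (which also supplies the estimate for $\IP(W'-W=2\given S)$), and without such an ingredient your local limit bound does not follow.

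Two further points. First, your quadratic remainder $T$ and the ensuing truncation worry are unnecessary: the conditional probability $\IP(D=1\given\%F_1)$ is, up to $\bigo(1/n)$ corrections, a globally Lipschitz bounded function of $W/n$, so $\abs{\IP(D=1\given S)-\IP(D=1)}\le C\bklr{\abs{W/n-m_h}+n^{-1/2}}$ outright, and \eq{10} holds with $k=1$ and $T=0$; this is exactly how the paper obtains the log-free rate, with condition~(3) of Corollary~\ref{17}\emph{(ii)} holding trivially. As written, your localization does not fit the corollary either, since condition~(3) demands that $T/\sigma$ be almost surely bounded and your $T\asymp(W-\mu_n)^2/n$ is not; it could be salvaged through condition~(3a), because $T$ is a function of $W$ with Gaussian-type tails, but it is simpler to note that no such $T$ is needed. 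Second, the theorem's $\TP$ parameters are $nm_h/2$ and $n(1-m_h^2)/(4(1-\beta+\beta m_h^2))$, not $\IE\tW$ and $\Var\tW$: you never bound $\abs{\mu_n-nm_h}$, which is needed both for $\IE R^2=\bigo(1)$ (this quantity enters $R$) and for the final replacement of the parameters $(\mu_n/2,\sigma_n^2/4)$ by the stated ones. The paper obtains $\abs{\mu_n-nm_h}=\bigo(1)$ by taking expectations in the regression identity~\eq{36} together with \eq{37}--\eq{38}, and then invokes a perturbation property of translated Poisson distributions; this closing step is missing from your proposal.
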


The remainder of the paper is devoted to proofs of the results above. 
Theorem~\ref{thm1} is proved in the next section, and application statements are proved in Section~\ref{sec4}.

\section{Proof of Theorem~\ref{thm1}}\label{sec:genres}

\subsection{Preliminaries}

To express the accuracy of approximation by a translated Poisson distribution using Stein's method,
we need the solutions~$(g_A)_{A\subset \IZ^+}$ of the Poisson Stein equation
\ben{
    \la \Delta g_A(i)- (i-\la) g_A(i) = \I[i\in A]-\Po(\la)\{A\},\quad {\ts i \ge 0}; 
    \qquad g_A(i) = 0, \quad {\ts i \leq 0}.
                 \label{22}
}
For approximation by~$\TP(\mu,\s^2)$,  defining~$s$ and~$\g$ as in~\eq{1}, we take~$\la \Def  \s^2 + \g$ and 
define~$f_A\colon\IZ\to\IR$ by~$f_A(i)\Def g_A(i-s)$. Where there is no likelihood of confusion, we 
write~$f_a$ for~$f_{\{a\}}$. Then the following representations of the accuracy of translated Poisson 
approximation to an integer valued random variable~$W$ were shown in \cite[(3.18)]{Rollin2007}.

\begin{proposition}\label{lem5}
Let~$W$ be an integer valued random variable with mean~$\mu$ and variance~$\sigma^2$, and let~$f_A$ be defined as above. Then
\ba{
  \dtv\bklr{\law(W), \TP(\mu,\sigma^2)} 
   &\leq\sup_{A\subseteq\IZ^+} 
               \abs{\IE \sigma^2 \Delta f_A(W) - (W-\mu) f_A(W)} +2 \sigma^{-2}, \\
   \dloc\bklr{\law(W), \TP(\mu,\sigma^2)}
   &\leq \sup_{a\in\IZ^+} 
             \abs{\IE \sigma^2 \Delta f_a(W) - (W-\mu) f_a(W)} +2 \sigma^{-2}.
}
\end{proposition}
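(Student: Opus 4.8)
The plan is to run the standard Stein's-method argument for Poisson approximation, keeping careful track of the two unavoidable sources of slack that produce the additive $2\sigma^{-2}$: the mass that~$W$ may place strictly below~$s=\floor{\mu-\sigma^2}$, which lies outside the support of~$\TP(\mu,\sigma^2)$, and the discrepancy between the Poisson parameter~$\lambda=\sigma^2+\gamma$ appearing in the Stein equation~\eq{22} and the~$\sigma^2$ appearing in the asserted bound.

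First I would record the standard properties of the solution~$g_A$ of~\eq{22}: it is bounded, and its forward difference satisfies the classical estimate~$\norm{\Delta g_A}\le(1-e^{-\lambda})/\lambda\le\lambda^{-1}$. Writing~$f_A(i)=g_A(i-s)$ and using~$s+\gamma+\sigma^2=\mu$, so that~$\lambda=\sigma^2+\gamma$ and~$i-s-\lambda=i-\mu$, equation~\eq{22} turns into
\[
  \lambda\,\Delta f_A(i)-(i-\mu)f_A(i)=\I[i\in s+A]-\IP[Z\in s+A],\qquad i\ge s,
\]
for~$A\subseteq\IZ^+$ and~$Z\sim\TP(\mu,\sigma^2)$, while for~$i<s$ both~$f_A(i)$ and~$f_A(i+1)$ vanish since~$g_A(j)=0$ for~$j\le0$, so this left-hand side is identically zero there. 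Setting~$U_A(i)$ equal to the left-hand side, we get~$U_A(W)=\bklr{\I[W\in s+A]-\IP[Z\in s+A]}\I[W\ge s]$, whence the key identity
\[
  \IP[W\in s+A]-\IP[Z\in s+A]=\IE U_A(W)-\IP[Z\in s+A]\,\IP[W<s].
\]
Two elementary bounds finish the set-up: since~$s\le\mu-\sigma^2$, Chebyshev gives~$\IP[W<s]\le\IP[\abs{W-\mu}\ge\sigma^2]\le\sigma^{-2}$; and splitting~$\lambda\Delta f_A=\sigma^2\Delta f_A+\gamma\Delta f_A$ with~$0\le\gamma<1$ and the difference bound gives~$\gamma\,\abs{\IE\Delta f_A(W)}\le\gamma/\lambda\le\sigma^{-2}$, so that~$\abs{\IE U_A(W)}\le\abs{\IE[\sigma^2\Delta f_A(W)-(W-\mu)f_A(W)]}+\sigma^{-2}$.

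For the total variation statement I would pick a set~$C$ attaining~$\dtv(\law(W),\TP(\mu,\sigma^2))$ and split it as~$C=C_-\cup C_+$ with~$C_-\subseteq(-\infty,s)$ and~$C_+\subseteq[s,\infty)$: because~$Z\ge s$ almost surely, $C_-$ contributes only~$\IP[W\in C_-]\le\IP[W<s]\le\sigma^{-2}$, while for~$C_+=s+A$ the identity above and the non-negativity of~$\IP[Z\in C_+]\,\IP[W<s]$ give~$\IP[W\in C_+]-\IP[Z\in C_+]\le\IE U_A(W)\le\abs{\IE[\sigma^2\Delta f_A(W)-(W-\mu)f_A(W)]}+\sigma^{-2}$; adding these produces exactly~$\sup_A\abs{\,\cdot\,}+2\sigma^{-2}$. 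For the local statement I would instead take~$A=\{a\}$ a singleton in the identity: for~$a\ge s$ this yields~$\abs{\IP[W=a]-\IP[Z=a]}\le\abs{\IE U_{\{a-s\}}(W)}+\IP[W<s]\le\abs{\IE[\sigma^2\Delta f_{a-s}(W)-(W-\mu)f_{a-s}(W)]}+2\sigma^{-2}$, and for~$a<s$ the left-hand side is just~$\IP[W=a]\le\sigma^{-2}$, already within the claimed bound. The argument is essentially routine modulo the two book-keeping points above; the only spot that needs a little care is the total variation reduction, where one must exploit the favourable sign of the~$\IP[Z\in\cdot]\,\IP[W<s]$ correction so as not to pay the~$\sigma^{-2}$ twice. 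This is the computation behind~\cite[(3.18)]{Rollin2007}.
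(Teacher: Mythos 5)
Your proposal is correct, and it is essentially the argument behind the paper's (un-displayed) proof: the paper simply cites R\"ollin (2007), equation (3.18), for this proposition, and your derivation --- shifting the Poisson Stein equation by $s$ so that $\lambda\Delta f_A(i)-(i-\mu)f_A(i)=\I[i\in s+A]-\IP[Z\in s+A]$ on $i\ge s$ and vanishes below $s$, then paying one $\sigma^{-2}$ via Chebyshev for $\IP[W<s]$ and one $\sigma^{-2}$ for the $\gamma\,\IE\Delta f_A(W)$ discrepancy using $\norm{\Delta g_A}\le\lambda^{-1}$ --- is exactly the standard computation underlying that citation, including the correct use of the favourable sign of the $\IP[Z\in\cdot]\,\IP[W<s]$ term in the total variation case and the separate treatment of singletons $a<s$.
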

These inequalities form the basis of our approximations and leveraging them requires detailed understanding
of the functions~$f_{A}$. Though much is known about these functions due to their role in Stein's method for Poisson approximation
 (see, for example, \cite{Barbour1992}), our results require new, finer properties potentially of interest in other Poisson approximation settings; see Lemma~\ref{prop1}.

\subsection{Properties of the solutions of the Poisson Stein equation}\label{sec3}

We first review the known bounds on~$g_A$. 

\begin{lemma}\label{lem6}(\cite{Barbour1992}) 
Let~$A\subseteq\IZ^+$, and let~$g_A$ be as in~\eq{22}. Then 
\ba{
   \norm{g_A}\leq \frac{1}{\lambda^{1/2}} \quad \mbox{ and } \quad \norm{\Delta g_A}
                  \leq \frac{1-e^{-\la}}{\la} \leq \frac{1}{\la}.
}
If~$A=\{a\}$ for some~$a \in \IZ^+$, then
\be{
     \norm{g_{a}}\leq \frac{1}{\lambda}.
}
\end{lemma}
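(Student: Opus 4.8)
These three estimates are classical (\cite{Barbour1992}); here is the route I would follow. The recursion in~\eq{22} is the first-order linear relation
\[
   \la\,g_A(i+1)-i\,g_A(i)=\I[i\in A]-\Po(\la)\{A\},\qquad i\ge0,
\]
together with $g_A(i)=0$ for $i\le0$, and solving it term by term gives, for every $k\ge1$,
\[
   g_A(k)=\frac{1}{\la\,\Po(\la)\{k-1\}}\sum_{j=0}^{k-1}\bklr{\I[j\in A]-\Po(\la)\{A\}}\Po(\la)\{j\},
\]
together with the equivalent ``upper-tail'' expression obtained by subtracting the identity $\sum_{j\ge0}\bklr{\I[j\in A]-\Po(\la)\{A\}}\Po(\la)\{j\}=0$. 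In particular, for a singleton $A=\{a\}$,
\[
   g_{\{a\}}(k)=\frac{\Po(\la)\{a\}\bklr{\I[a\le k-1]-\Po(\la)\{0,\dots,k-1\}}}{\la\,\Po(\la)\{k-1\}},
\]
so that $g_{\{a\}}(k)\le0$ for $k\le a$ and $g_{\{a\}}(k)\ge0$ for $k>a$.

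By linearity $g_A=\sum_{a\in A}g_{\{a\}}$, and since the summands have the sign pattern just noted, for each fixed $k$ the value $g_A(k)$ is maximised over $A\subseteq\IZ^+$ by $A=\{0,\dots,k-1\}$ and minimised by $A=\{k,k+1,\dots\}$; because $g_{\IZ^+}\equiv0$, these two extremal values are negatives of one another. Hence $\norm{g_A}\le\sup_{m\ge0}g_{\{0,\dots,m\}}(m+1)$, and evaluating the closed form at $A=\{0,\dots,m\}$, $k=m+1$,
\[
   g_{\{0,\dots,m\}}(m+1)=\frac{\Po(\la)\{0,\dots,m\}\,\Po(\la)\{m+1,m+2,\dots\}}{\la\,\Po(\la)\{m\}} .
\]
The bound on $\norm{\Delta g_A}$ is obtained in the same way: writing $\Delta g_A=\sum_{a\in A}\Delta g_{\{a\}}$ and using the sign structure of the increments $\Delta g_{\{a\}}$ (read off from the closed form), the extremal sets are again intervals, and one is reduced to the supremum of an explicit Poisson expression in which the boundary weight $\Po(\la)\{0\}=e^{-\la}$ produces the factor $1-e^{-\la}$; the singleton bound $\norm{g_a}\le\la^{-1}$ likewise comes directly from the formula above.

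It remains to bound these explicit suprema by the stated constants, which is the one genuinely delicate step. For $\norm{g_A}$ one must show that $\Po(\la)\{0,\dots,m\}\,\Po(\la)\{m+1,m+2,\dots\}\le\la^{1/2}\,\Po(\la)\{m\}$ for all $m\ge0$; I would split at the mode $m=\la$, using that for $m<\la$ the successive ratios $\Po(\la)\{j\}/\Po(\la)\{j-1\}=\la/j$ exceed $1$, so $\Po(\la)\{0,\dots,m\}\le\Po(\la)\{m\}\cdot\la/(\la-m)$, which together with $\Po(\la)\{m+1,m+2,\dots\}\le1$ settles $m$ away from the mode, while for $m$ near the mode one combines $\Po(\la)\{0,\dots,m\}\,\Po(\la)\{m+1,\dots\}\le\tfrac14$ with the Stirling lower bound $\Po(\la)\{m\}\ge c\,\la^{-1/2}$; a symmetric argument, now bounding the upper tail geometrically, handles $m\ge\la$, and the same case analysis applied to the singleton formula gives $\norm{g_a}\le\la^{-1}$. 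Getting the numerical constants to come out exactly as stated is the bookkeeping carried out in \cite{Barbour1992}, and that is the part I expect to be the main obstacle to a fully self-contained argument. As a cross-check (and a cleaner route to the $\la^{-1/2}$ and $1-e^{-\la}$ factors), one may instead represent $g_A$ through the generator of the immigration--death chain with stationary law $\Po(\la)$ and couple two copies started one unit apart: the single initial particle survives to time $t$ with probability $e^{-t}$, which yields $\norm{g_A}\le\int_0^\infty e^{-t}\,dt=1$ at once, and replacing the trivial bound on the resulting one-step difference by $\bigo\bklr{(\la(1-e^{-t}))^{-1/2}}$ before integrating gives $\norm{g_A}=\bigo(\la^{-1/2})$; an analogous two-particle coupling treats $\norm{\Delta g_A}$.
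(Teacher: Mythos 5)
The paper offers no proof of this lemma at all: it is quoted directly from \cite{Barbour1992} (Lemma~1.1.1), so there is nothing internal to compare your argument with line by line, and a citation-plus-sketch is in keeping with how the paper itself treats the result. Your sketch is essentially a reconstruction of the argument in that reference, and its structural part is correct: the forward solution of the recursion, the sign pattern of $g_{\{a\}}$ (nonpositive up to $a$, nonnegative above), the observation that for fixed $k$ the extremes of $g_A(k)$ over $A$ are attained at $\{0,\dots,k-1\}$ and its complement and are negatives of one another because $g_{\mathbb{Z}^+}\equiv0$, and the resulting closed form $\Po(\lambda)\{0,\dots,m\}\,\Po(\lambda)\{m+1,m+2,\dots\}/(\lambda\,\Po(\lambda)\{m\})$ all check out, and agree with the representation \eqref{23} that the paper records in Section~\ref{sec3}. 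Two points deserve flagging. First, for $\|\Delta g_A\|$ the extremal sets are not intervals: the relevant fact is that $\Delta g_{\{a\}}(k)\le0$ for every $a\ne k$, i.e.\ $g_{\{a\}}$ is decreasing away from $a$ on either side --- a monotonicity that has to be proved rather than ``read off'' --- whence the maximiser of $\Delta g_A(k)$ is the singleton $\{k\}$, the minimiser is its complement (using again that the singleton contributions sum to zero), and the supremum of $\Delta g_{\{k\}}(k)$ over $k$ is attained at $k=0$, where it equals $(1-e^{-\lambda})/\lambda$; that is indeed where the stated factor comes from, so your instinct about the boundary weight is right even though the intermediate claim is misstated. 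Second, as you yourself acknowledge, your mode-splitting estimates and the concluding immigration--death coupling deliver the bounds only up to constants: near the mode your combination of the bound $\tfrac14$ for the tail product with Stirling gives a pmf lower bound of roughly $(2\pi e)^{-1/2}\lambda^{-1/2}$, and $(2\pi e)^{-1/2}<\tfrac14$, so the argument as sketched yields a constant slightly larger than $1$ in front of $\lambda^{-1/2}$; likewise the coupling route gives $\|g_A\|=\bigo(\lambda^{-1/2})$ and $\|\Delta g_A\|=\bigo(\lambda^{-1})$ without the sharp constants. So as a self-contained proof your argument establishes the correct orders but not the precise constants stated, and for those one genuinely does need the bookkeeping in the cited lemma --- a limitation you identified correctly.
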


\noindent Note that~$f_A$ satisfies the same bounds as does~$g_A$, but with~$\lambda=\sigma^2+\gamma$.

The bound on~$\norm{g_a}$ is smaller than the general bound on~$\norm{g_A}$ for~$A\subset\IZ^+$
by a factor of~$\la^{-1/2}$.  This suggests that the same might be true for a bound on~$\norm{\D g_a}$,
but it is not the case: In fact, $\D g_a(a)$ is typically comparable to~$\la^{-1}$ and is not of
order~$O(\la^{-3/2})$, as might have been hoped.  In the remainder of this section, we establish non-uniform bounds
on~$|\D g_a(k)|$, showing that~$|\D g_a(k)|$ is nonetheless `typically' of order~$O(\la^{-3/2})$.
This enables us to make the sharper local limit approximations of the paper.
 
Let~$U_j:=\{0,\ldots, j-1\}$ and~$\PIP_\lambda(\cdot):=\Po(\lambda)\{\cdot\}$. Then the solution~$g_a$ of~\eq{22}
with~$A = \{a\}$ can be written as 
\besn{\label{23}
  g_a(k) = \lambda^{-k}e^{\lambda}(k-1)!\, \begin{cases}
     \PIP_\lambda(a) \PIP_\lambda(U_k^c), &k\geq a+1 \\
    -\PIP_\lambda(a) \PIP_\lambda(U_k), &1 \leq k\leq a.
   \end{cases}
}
We use this expression to prove the following bound.

\begin{lemma}\label{prop1}
Let~$g_a$ be as defined at~\eq{23}. Then, for~$k\geq 0$, 
\bes{
  \abs{\Delta g_a(k)}&\leq \frac{1}{\lambda^{3/2}\sqrt{2e}} \bklr{\I\kle{{\ts k>a, k\geq \lambda}} + \I\kle{k<a,k<\lambda}}\\
	&\qquad\ +\left(\frac{\PIP_\lambda(a)}{a+1} +\frac{(\lambda-k)}{\lambda^2}\right)\I\kle{a<k<\lambda} \\
	&\qquad\ +\left(\frac{\PIP_\lambda(a)}{\lambda} +\frac{(k-\lambda)}{\lambda^2}\right)\I\kle{{\ts \lambda \leq k< a}}\\
	&\qquad\ + \frac{1}{\lambda} \I\kle{{\ts k=a}}\\
  & \leq \frac{1}{\lambda^{3/2}\sqrt{2e}} + \frac{\abs{\lambda-k}}{\lambda^2}
+ \frac{1}{\lambda} \I[{\ts k=a}].
}
\end{lemma}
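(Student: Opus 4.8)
The plan is to work directly from the closed-form expression~\eq{23} for~$g_a(k)$, splitting into cases according to the position of~$k$ relative to both~$a$ and~$\lambda$, and in each case estimating the ratios of consecutive Poisson tail probabilities that arise. First I would record the basic Poisson tail identities that will be used repeatedly: for the upper tail, $\PIP_\lambda(U_k^c) = \sum_{j\ge k}\PIP_\lambda(j)$, and for the lower tail $\PIP_\lambda(U_k) = \sum_{j<k}\PIP_\lambda(j)$; together with the elementary two-sided bounds of Mill's-ratio type, namely that for~$k\ge\lambda$ one has $\PIP_\lambda(U_k^c)\le \frac{k+1}{k+1-\lambda}\PIP_\lambda(k)$ and similarly for~$k\le\lambda$ the lower tail is controlled by $\frac{\lambda}{\lambda-k+1}\PIP_\lambda(k-1)$ or so. The key structural observation is that $\Delta g_a(k) = g_a(k+1)-g_a(k)$, and using the recursion satisfied by $\lambda^{-k}e^\lambda(k-1)!$ one can write $\Delta g_a(k)$ as a single clean expression: from~\eq{22} itself, $\Delta g_a(k) = \frac{(k-\lambda)g_a(k) + \I[k=a] - \PIP_\lambda(a)}{\lambda}$, which already exhibits the $\I[k=a]/\lambda$ term and reduces everything to bounding $\frac{(k-\lambda)}{\lambda}g_a(k)$ and $\frac{\PIP_\lambda(a)}{\lambda}$ off the diagonal.

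Given that reduction, the main work is a good pointwise bound on~$|g_a(k)|$, or rather on $|k-\lambda|\,|g_a(k)|$. From~\eq{23}, $|g_a(k)| = \lambda^{-k}e^\lambda(k-1)!\,\PIP_\lambda(a)\cdot\PIP_\lambda(U_k^c)$ when $k\ge a+1$ and with $\PIP_\lambda(U_k)$ when $1\le k\le a$. The factor $\lambda^{-k}e^\lambda(k-1)! = 1/(\lambda \PIP_\lambda(k-1))\cdot$(minor adjustment) lets me rewrite $g_a(k)$ as a ratio of Poisson point masses times a tail, so that the whole thing becomes, e.g. for $k>a$, $g_a(k) = \frac{\PIP_\lambda(a)}{\lambda}\cdot\frac{\PIP_\lambda(U_k^c)}{\PIP_\lambda(k-1)}$ up to bookkeeping. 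Then I split: (i) when~$k$ and~$a$ are on the same side of~$\lambda$ and~$k$ is in the "tail" direction (i.e.\ $k>a,\ k\ge\lambda$ or $k<a,\ k<\lambda$), the tail ratio is $O(1)$ and $\PIP_\lambda(a)\le \sup_j\PIP_\lambda(j) = O(\lambda^{-1/2})$, while the prefactor $1/\lambda$ and a further $|k-\lambda|/\lambda \le$ (something controlled) combine to give the $\frac{1}{\lambda^{3/2}\sqrt{2e}}$ bound — here the constant $\sqrt{2e}$ comes from optimizing the bound $\sup_j \PIP_\lambda(j)\cdot$(linear factor), essentially the estimate $\sup_x x e^{-x^2/2}$-type calculation or the sharp form of $\PIP_\lambda(j)\le 1/\sqrt{2e\lambda}$ near the mode refined by a $(j-\lambda)/\lambda$ weight; (ii) when $a<k<\lambda$, the relevant tail is the "wrong-side" lower tail which is $O(1)$ but now $\PIP_\lambda(a)$ appears divided by roughly $a+1$ after using $\PIP_\lambda(a)\le \frac{\lambda}{a+1}\PIP_\lambda(a+1)\le\cdots$, producing the $\frac{\PIP_\lambda(a)}{a+1}$ term, plus the $\frac{\lambda-k}{\lambda^2}$ contribution from the $(k-\lambda)g_a(k)/\lambda$ piece bounded via $|g_a(k)|\le\norm{g_a}\le 1/\lambda$ (Lemma~\ref{lem6}); (iii) symmetrically for $\lambda\le k<a$, using $\PIP_\lambda(a)\le\PIP_\lambda(\text{mode})$ and again the $1/\lambda$ bound on $g_a$, giving $\frac{\PIP_\lambda(a)}{\lambda}+\frac{k-\lambda}{\lambda^2}$; (iv) $k=a$ is handled by the $\I[k=a]/\lambda$ term coming directly from the rewritten $\Delta g_a$.

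For the final "$\le$" line of the lemma one just crudely overestimates: each of the indicator-weighted terms is dominated by $\frac{1}{\lambda^{3/2}\sqrt{2e}} + \frac{|\lambda-k|}{\lambda^2} + \frac{1}{\lambda}\I[k=a]$, using $\PIP_\lambda(a)/(a+1)\le \PIP_\lambda(a)/\lambda\cdot(\lambda/(a+1))$ — wait, more simply, $\PIP_\lambda(a)\le 1/\sqrt{2e\lambda}$ so $\PIP_\lambda(a)/\lambda\le 1/(\lambda^{3/2}\sqrt{2e})$ and $\PIP_\lambda(a)/(a+1)\le 1/\sqrt{2e\lambda}\cdot 1/(a+1)$, which needs $a+1\ge\sqrt\lambda$ to absorb cleanly; on the range $a<k<\lambda$ with $a$ small this instead gets absorbed by noting $\PIP_\lambda(a)$ itself is then exponentially small, so $\PIP_\lambda(a)/(a+1)\le \PIP_\lambda(a)\le$ tiny $\le 1/(\lambda^{3/2}\sqrt{2e})$ for $\lambda$ large. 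I would make that dichotomy ($a\gtrsim\sqrt\lambda$ vs.\ $a$ small) explicit.

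\textbf{Main obstacle.} The delicate part is getting the \emph{constant} $1/\sqrt{2e}$ right in case~(i) rather than just an $O(\lambda^{-3/2})$ bound. This requires the sharp inequality that the Poisson point mass times the appropriate linear-in-$(k-\lambda)$ weight, after the tail ratio is bounded by~$1$, does not exceed $1/\sqrt{2e}$ — i.e.\ a clean one-line optimization, presumably $\sup_{k}\PIP_\lambda(k)\,\frac{|k-\lambda|}{\sqrt\lambda}\le \frac1{\sqrt{2e}}$ or the variant $\PIP_\lambda(k)\le\frac{1}{\sqrt{2e\lambda}}$ combined carefully. Everything else — the case analysis, the tail-ratio estimates, and the crude final bound — is routine once the representation $\Delta g_a(k)=\lambda^{-1}\{(k-\lambda)g_a(k)+\I[k=a]-\PIP_\lambda(a)\}$ is in hand and Lemma~\ref{lem6} is invoked for the off-diagonal $|g_a|\le 1/\lambda$ control.
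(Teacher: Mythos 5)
Your reduction via the Stein equation~\eq{22}, namely $\lambda\,\Delta g_a(k) = (k-\lambda)g_a(k) + \I[{\ts k=a}] - \PIP_\lambda(a)$, combined with $\abs{g_a(k)}\le \lambda^{-1}$ from Lemma~\ref{lem6} and $\PIP_\lambda(a)\le (2e\lambda)^{-1/2}$, does give the second, uniform bound of the lemma in one line (and that weaker bound is all that Lemma~\ref{lem7} and the rest of the paper use); it also covers the two middle lines of the refined display (for $a<k<\lambda$ you in fact get the stronger $\PIP_\lambda(a)/\lambda\le\PIP_\lambda(a)/(a+1)$) and the case $k=a$ via $\norm{\Delta g_a}\le 1/\lambda$ --- note, though, that at $k=a$ your representation does not by itself collapse to $\I[{\ts k=a}]/\lambda$, since the terms $(a-\lambda)g_a(a)$ and $1-\PIP_\lambda(a)$ are both present; you must simply quote the known bound from Lemma~\ref{lem6}, as the paper does. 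This is a genuinely different and much shorter route than the paper's, which computes $\Delta g_a(k)$ explicitly from~\eq{23} via an incomplete-gamma identity.

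The gap is in the two tail-direction cases of the first display, $k>a,\ k\ge\lambda$ and $k<a,\ k<\lambda$, where the asserted bound is $\frac{1}{\sqrt{2e}}\lambda^{-3/2}$ with no $\abs{k-\lambda}/\lambda^2$ allowance. Your triangle-inequality split yields $\frac{\abs{k-\lambda}}{\lambda}\abs{g_a(k)} + \frac{\PIP_\lambda(a)}{\lambda}$, and the second summand alone can already be as large as $\frac{1}{\sqrt{2e}}\lambda^{-3/2}$ (take $a$ at the mode), so no sharpening of the first summand --- in particular not the inequality $\sup_k\PIP_\lambda(k)\abs{k-\lambda}/\sqrt\lambda\le(2e)^{-1/2}$ that you single out as the key obstacle --- can recover the stated constant; at best you obtain twice it. What is needed, and what the paper's computation~\eq{24}--\eq{25} encodes, is the cancellation between the two terms: in these regions $(k-\lambda)g_a(k)$ and $\PIP_\lambda(a)$ have the same sign and $0\le(k-\lambda)g_a(k)\le\PIP_\lambda(a)$, whence $\abs{\lambda\Delta g_a(k)}\le\PIP_\lambda(a)\le(2e)^{-1/2}\lambda^{-1/2}$. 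Within your framework this follows from the geometric ratio estimates $\PIP_\lambda(U_k^c)\le\frac{k+1}{k+1-\lambda}\PIP_\lambda(k)$ for $k\ge\lambda$ and $\PIP_\lambda(U_k)\le\frac{\lambda}{\lambda-k+1}\PIP_\lambda(k-1)$ for $k\le\lambda$, which you mention but never deploy for this purpose. Finally, your dichotomy for absorbing $\PIP_\lambda(a)/(a+1)$ into the last line is unnecessary and, as sketched (``exponentially small for $\lambda$ large''), not uniform: the identity $\PIP_\lambda(a)/(a+1)=\PIP_\lambda(a+1)/\lambda\le(2e)^{-1/2}\lambda^{-3/2}$ settles it, and is exactly how the paper passes from the first display to the second.
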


\begin{proof} The second bound follows from the first by noting that~$\PIP_\lambda(a)/(a+1)=\lambda^{-1} \PIP_\lambda(a+1)$;
here and below we use the bound~$\sup_{k\geq0}\PIP_\lambda(k)\sqrt{\lambda}\leq (2e)^{-1/2}$, from \cite[Proposition~A.2.7]{Barbour1992}.

The proof of the first bound consists of separate arguments in a number of cases, depending on the relative magnitudes of~$\la$, $k$ and~$a$.

\medskip
\noindent\textbf{Case 1:} Assume that~$k\geq a+1$. Then
\bes{
  \Delta g_a(k)&=\lambda^{-k-1} e^\lambda (k-1)! \PIP_\lambda(a) 
                \bklr{k \PIP_\lambda(U_{k+1}^c)-\lambda \PIP_\lambda(U_k^c)}\\
         &=\lambda^{-k-1} e^\lambda (k-1)! \PIP_\lambda(a) 
             \bbbklr{k\sum_{j=k+1}^\infty \frac{e^{-\lambda}  \lambda^j}{j!}-\sum_{j=k+1}^\infty  
                    \frac{e^{-\lambda}\lambda^j}{(j-1)!}}\\
         &=\lambda^{-k-1} e^\lambda (k-1)! \PIP_\lambda(a) \sum_{j=k+1}^\infty  \frac{e^{-\lambda}\lambda^j}{j!}(k-j).
}
We now use the fact that
\bes{
   I_k(\la) \Def \frac1{k!}\int_0^\la t^k e^{-t}\,dt   =\sum_{j=k+1}^\infty \frac{e^{-\lambda}  \lambda^j}{j!}&
      = -\frac{e^{-\la}\la^k}{k!} + I_{k-1}(\la)
}
to give
\besn{\label{24}
   \Delta g_a(k)&= \lambda^{-k-1} e^\lambda (k-1)! \PIP_\lambda(a)\{kI_k(\la) - \la I_{k-1}(\la)\}\\
   &=-\frac{\PIP_\lambda(a)}{k} \bklr{1+e^\lambda \lambda^{-k-1}(\lambda-k)\int_{0}^\lambda t^k e^{-t}\, dt} \\
   &=-\frac{\PIP_\lambda(a)}{k} \bbbklr{1+\frac{\lambda-k}{\lambda}\int_{0}^\lambda 
                        \left(\frac{t}{\lambda}\right)^k e^{\lambda-t}\, dt}.
}
\noindent\textbf{Subcase 1.1:} If~$k\geq \lambda$, then 
\besn{\label{25}
  0&\leq\frac{k-\la}{\lambda}\int_{0}^\lambda \left(\frac{t}{\lambda}\right)^k e^{\lambda-t}\, dt
            = \frac{k-\lambda}{\lambda}\int_{0}^\lambda \left(1+\frac{t-\lambda}{\lambda}\right)^k e^{\lambda-t}\, dt\\
    &\leq \frac{k-\lambda}{\lambda}\int_{0}^\lambda e^{(\lambda-t)(1-k/\lambda)}\, dt
          = 1-e^{\lambda-k}\leq1.
}
Therefore, in this case,
\bes{
\bbbabs{1-\frac{k-\lambda}{\lambda}\int_{0}^\lambda \left(\frac{t}{\lambda}\right)^k e^{\lambda-t} \,dt}\
           &= 1-\frac{k-\lambda}{\lambda}\int_{0}^\lambda \left(\frac{t}{\lambda}\right)^k e^{\lambda-t}\, dt \\
    &\leq 1-\frac{k-\lambda}{\lambda}\int_{0}^\lambda \left(\frac{t}{\lambda}\right)^k \,dt 
        = \frac{\lambda+1}{k+1},
}
and so
\be{
   \abs{\Delta g_a(k)}\leq \frac{\PIP_\lambda(a) (\lambda+1)}{k(k+1)}\leq \frac{1}{\sqrt{2e}} \lambda^{-3/2}.
}

\medskip
\noindent\textbf{Subcase 1.2:}
If~$a<k < \lambda$, then both summands in the expression for~$\Delta g_a(k)$ at the end of~\eq{24} are positive, and 
so, because~$I_k(\la) \le 1$, we have
\besn{\label{26}
\frac{\PIP_\lambda(a)}{k} \bbbklr{1+\frac{\lambda-k}{\lambda}\int_{0}^\lambda \left(\frac{t}{\lambda}\right)^k
               e^{\lambda-t} \, dt}
	&\leq \frac{\PIP_\lambda(a)}{k} \bbbklr{1+\frac{(\lambda-k)k!}{e^{-\lambda}\lambda^{k+1}}}\\	
	&=\frac{\PIP_\lambda(a)}{k} +\frac{(\lambda-k)}{\lambda^2} \frac{\PIP_\lambda(a)}{\PIP_\lambda(k-1)} \\
	&\leq \frac{\PIP_\lambda(a)}{a+1} +\frac{(\lambda-k)}{\lambda^2}, 
}
where, in the last inequality, we have used the unimodality of the Poisson distribution. 

\medskip
\noindent\textbf{Case 2:}
Assume that~$k\leq  a-1$. Then, if~$k\geq1$, following arguments similar to those above, we have
\bes{
   \Delta g_a(k)
	&=\lambda^{-k-1} e^\lambda (k-1)! \PIP_\lambda(a) \bklr{\lambda \PIP_\lambda(U_k)-k \PIP_\lambda(U_{k+1})}\\
	&=\lambda^{-k-1} e^\lambda (k-1)! \PIP_\lambda(a) \sum_{j=0}^{k-1} (j-k) \frac{e^{-\lambda} \lambda^j}{j!} \\
	&=\frac{\PIP_\lambda(a)}{k} \bklr{-1+e^\lambda \lambda^{-k-1}(\lambda-k)\int_{\lambda}^\infty t^k e^{-t} dt} \\
	&=\frac{\PIP_\lambda(a)}{k} \bbbklr{-1+\frac{\lambda-k}{\lambda}
              \int_\lambda^\infty \left(\frac{t}{\lambda}\right)^k e^{\lambda-t} \,dt}.
}
\noindent\textbf{Subcase 2.1:} If~$k<\lambda$,  we first take~$k=0$, where it is easy to see that 
\be{
  \abs{\Delta g_a(0)}=-g_a(1)=\lambda^{-1} \PIP_\lambda(a)\leq \frac{1}{\sqrt{2e}} \lambda^{-3/2}.
}
For~$1\leq k <\lambda$, an argument similar to~\eq{25} shows that 
\be{
  \abs{\Delta g_a(k)} = \frac{\PIP_\lambda(a)}{k} \bbbklr{1-\frac{\lambda-k}{\lambda}
                  \int_\lambda^\infty \left(\frac{t}{\lambda}\right)^k e^{\lambda-t} dt},
}
and by bounding~$t/\lambda >1$, we easily find that
\be{
\abs{\Delta g_a(k)}\leq \frac{\PIP_\lambda(a)}{\lambda}\leq \frac{1}{\sqrt{2e}} \lambda^{-3/2}.
}

\noindent\textbf{Subcase 2.2:} For~$\lambda \leq k <a$, following the same argument as in~\eq{26} yields that
\be{
 \abs{\Delta g_a(k)}\leq \frac{\PIP_\lambda(a)}{\lambda} +\frac{(k-\lambda)}{\lambda^2}
  \leq \frac{1}{\sqrt{2e}}\lambda^{-3/2} +\frac{(k-\lambda)}{\lambda^2}.
}

\medskip
\noindent\textbf{Case 3:} If~$k=a$, then we use the known bound~$\norm{\Delta g_a}_\infty<1/\lambda$ 
from \cite[Lemma~1.1.1]{Barbour1992}.
\end{proof}

For translated Poisson approximation, the bound in Lemma~\ref{prop1} easily translates into the following result. 

\begin{lemma}\label{lem7}
Let~$\mu,\sigma^2>0$, $s=\lfloor \mu-\sigma^2\rfloor$, $\gamma=\mu-\sigma^2-s$, and set~$\lambda=\sigma^2+\gamma$ and~$f_{a}(k)=g_{a}(k-s)$ for~$g_a$ the Poisson Stein solution defined at~\eq{23}. Then
\be{
  \abs{\Delta f_{a}(k)}\leq \frac{1}{\sigma^{3}\sqrt{2e}} + \frac{\abs{\mu-k}}{\sigma^{4}}  + \frac{\I\{{\ts k=a+s}\}}{\sigma^{2} }.
}
\end{lemma}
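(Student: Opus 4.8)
The plan is to obtain Lemma~\ref{lem7} as an immediate consequence of Lemma~\ref{prop1} via the defining shift $f_a(k) = g_a(k-s)$. Since this shift is by an integer, $\Delta f_a(k) = f_a(k+1) - f_a(k) = g_a(k+1-s) - g_a(k-s) = \Delta g_a(k-s)$ for every $k \in \IZ$, so everything reduces to understanding $\Delta g_a$ at the point $k - s$ with the parameter choice $\lambda = \sigma^2 + \gamma$.

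First I would dispose of the range $k < s$: there $k - s \leq -1$, and because $g_a$ vanishes on the non-positive integers, $\Delta g_a(k-s) = 0$, so the claimed (non-negative) bound holds trivially. For $k \geq s$ one has $k - s \geq 0$, and the second, uniform bound of Lemma~\ref{prop1} applies directly:
\be{
  \abs{\Delta f_a(k)} = \abs{\Delta g_a(k-s)} \leq \frac{1}{\lambda^{3/2}\sqrt{2e}} + \frac{\abs{\lambda - (k-s)}}{\lambda^2} + \frac{1}{\lambda}\I[k-s=a].
}

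It then remains to rewrite the right-hand side in the variables $\mu, \sigma, k$. From $s = \floor{\mu - \sigma^2}$ and $\gamma = \mu - \sigma^2 - s$ we get $\lambda = \sigma^2 + \gamma = \mu - s$, hence $\lambda - (k-s) = \mu - k$, and $\I[k-s=a] = \I[k = a+s]$. Since $\gamma \geq 0$ we also have $\lambda \geq \sigma^2 > 0$, so $\lambda^{-3/2} \leq \sigma^{-3}$, $\lambda^{-2} \leq \sigma^{-4}$ and $\lambda^{-1} \leq \sigma^{-2}$; substituting these yields precisely the stated inequality. There is essentially no obstacle in this argument --- it is a routine translation --- and the only two points that need a moment's attention are the boundary range $k < s$ and the elementary identity $\lambda - (k-s) = \mu - k$, which is exactly what makes the non-uniform term emerge in the form $\abs{\mu-k}/\sigma^4$.
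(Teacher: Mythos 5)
Your proposal is correct and is exactly the argument the paper intends: the paper gives no separate proof of Lemma~\ref{lem7}, stating only that the bound of Lemma~\ref{prop1} ``easily translates,'' and your translation (via $\Delta f_a(k)=\Delta g_a(k-s)$, the identity $\lambda=\mu-s$ so that $\lambda-(k-s)=\mu-k$, and $\lambda\geq\sigma^2$) is that routine step carried out in full, including the harmless boundary case $k<s$.
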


\subsection{Completing the proof of Theorem~\ref{thm1}}

In order to exploit Proposition~\ref{lem5}, we first need a manageable bound for the expectations
$\abs{\IE \sigma^2 \Delta f(W) - (W-\mu) f(W)}$ that appear there.  This is given in the following lemma.

\begin{lemma}\label{lem8}
Let~$(W,W',G,R)$ be an approximate Stein coupling with~$W$ and~$W'$ integer valued, ~$\IE W=\mu$ and~$\Var(W)=\sigma^2$.
Set~$D\Def W'-W$, and let~$\%F_1$ and~$\%F_2$ be sigma-algebras such that\/~$W$ is~$\%F_1$-measurable
 and such that\/~$(G,D)$ is~$\%F_2$-measurable.
Then
\ban{
&\abs{\IE \sigma^2 \Delta f(W) - (W-\mu) f(W)} \notag\\
\label{27}
	&\qquad\leq \babs{\IE\left[ \left(\IE [GD |\% F_1]-\IE GD\right) \Delta f(W)\right]}
	+\IE\abs{ R (W-\mu)}\IE\abs{ \Delta f(W)}+\IE\abs{R f(W)}
\\
	& \qquad\qquad + \IE\left[\frac{\abs{G D(D-1)}}{2}
       \min\left\{\norm{\Delta f} S_1(\law(W|\%F_2)), \norm{f} S_2(\law(W|\%F_2)) \right\}\right].\label{28}
}
\end{lemma}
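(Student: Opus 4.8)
The plan is to bound the left-hand side of the asserted inequality directly; this is precisely the quantity that, via Proposition~\ref{lem5}, feeds into Theorem~\ref{thm1}. First I would apply the defining relation~\eqref{4} of the approximate Stein coupling with the given $f$ to rewrite $\IE[(W-\mu)f(W)] = \IE[G(f(W')-f(W))] - \IE[Rf(W)]$, so that
\[
  \IE\sigma^2\Delta f(W) - \IE[(W-\mu)f(W)]
    = \bigl(\IE[\sigma^2\Delta f(W)] - \IE[G(f(W')-f(W))]\bigr) + \IE[Rf(W)] .
\]
The final term contributes $\IE\abs{Rf(W)}$ after $\abs{\IE[\cdot]}\le\IE\abs{\cdot}$, so the substance is in the bracketed difference.

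Next I would carry out a discrete second-order Taylor expansion of $f(W')-f(W)$. With $D\Def W'-W$, for integer-valued $W,W'$ one has
\[
  f(W')-f(W) = D\,\Delta f(W) + \sum_m w_m(D)\,\Delta^2 f(W+m),
\]
a finite sum in which the weights $w_m(D)\ge0$ and the index set depend only on $D$, with total mass $\sum_m w_m(D) = \tfrac12\abs{D(D-1)}$ for every integer $D$; I would verify this identity separately for $D\ge2$, for $D\le-2$, and for the trivial cases $D\in\{-1,0,1\}$. Substituting and splitting off $\IE[GD\,\Delta f(W)]$, the residual $\IE\bigl[G\sum_m w_m(D)\,\Delta^2 f(W+m)\bigr]$ is handled by conditioning on $\%F_2$: because $(G,D)$ is $\%F_2$-measurable, $G$, $D$, the weights and the index set are all determined by $\%F_2$, while $\IE[\Delta^2 f(W+m)\giv\%F_2] = \IE[\Delta^2 h(W)\giv\%F_2]$ with $h\Def f(\cdot+m)$, so by the definition~\eqref{2} of the smoothing coefficients this conditional expectation is bounded in absolute value both by $\norm{\Delta f}\,S_1(\law(W\giv\%F_2))$ (viewing $\Delta^2 f(W+m)$ as a single difference of $\Delta f(\cdot+m)$) and by $\norm{f}\,S_2(\law(W\giv\%F_2))$. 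Multiplying by $\tfrac12\abs{D(D-1)}$ and taking expectations produces exactly the term~\eqref{28}.

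It then remains to compare $\IE[\sigma^2\Delta f(W)]$ with $\IE[GD\,\Delta f(W)]$. Since $W$, and hence $\Delta f(W)$, is $\%F_1$-measurable, $\IE[GD\,\Delta f(W)] = \IE\bigl[\IE[GD\giv\%F_1]\,\Delta f(W)\bigr]$; and by~\eqref{7}, $\IE[GD] = \sigma^2 + \IE[R(W-\mu)]$, whence $\sigma^2 - \IE[GD\giv\%F_1] = -\bigl(\IE[GD\giv\%F_1]-\IE[GD]\bigr) - \IE[R(W-\mu)]$. This gives
\[
  \IE[\sigma^2\Delta f(W)] - \IE[GD\,\Delta f(W)]
    = -\IE\bigl[\bigl(\IE[GD\giv\%F_1]-\IE[GD]\bigr)\Delta f(W)\bigr] - \IE[R(W-\mu)]\,\IE[\Delta f(W)],
\]
whose first term is the leading term of~\eqref{27} and whose second is at most $\IE\abs{R(W-\mu)}\,\IE\abs{\Delta f(W)}$, the remaining term of~\eqref{27}. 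Collecting the three steps with the triangle inequality and $\abs{\IE[\cdot]}\le\IE\abs{\cdot}$ yields the claim. I expect the main obstacle to be the bookkeeping in the discrete Taylor expansion — fixing the weights $w_m(D)$ and confirming their total mass is $\tfrac12\abs{D(D-1)}$ uniformly in the sign of $D$ — together with the point that the conditioning step is legitimate precisely because $(G,D)$, and not $W$ itself, is $\%F_2$-measurable, which is exactly what lets us use the conditional smoothness $S_2(\law(W\giv\%F_2))$ rather than a crude uniform bound on $\norm{\Delta^2 f}$.
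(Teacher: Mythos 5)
Your proposal is correct and takes essentially the same route as the paper's proof: rewrite $\IE[(W-\mu)f(W)]$ via the coupling identity, add and subtract $GD\,\Delta f(W)$ (your discrete Taylor expansion with non-negative weights of total mass $\tfrac12\abs{D(D-1)}$ is exactly the paper's double sum of second differences, checked separately for positive and negative $D$), replace $\sigma^2$ by $\IE[GD]-\IE[R(W-\mu)]$ using \eqref{7}, and condition on $\mathcal{F}_1$ and $\mathcal{F}_2$ to produce the terms \eqref{27} and \eqref{28}. No gaps.
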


\begin{proof}
Since~$(W, W', G,R)$ is an approximate Stein coupling,
\be{
   \IE(W-\mu)f(W) = \IE[G(f(W')-f(W))]-\IE[ Rf(W)],
}
and therefore
\ban{
   \babs{\IE [\sigma^2 \Delta f(W)& - (W-\mu) f(W)]} \notag\\
	&\leq \babs{\IE \left[ \sigma^2 \Delta f(W) - G(f(W')-f(W))\right]}+\IE \abs{R f(W)} \label{29}
}
With~$D=W'-W$, add and subtract~$G D \Delta f(W)$, and write~$\sigma^2 = \IE [GD] - \IE\{ R(W-\mu)\}$ 
using~\eq{7}, giving
\bes{
   &\IE\left[ \sigma^2 \Delta f(W) - G(f(W')-f(W))\right] \\
     &= \IE \left[(\IE [GD] - GD)\D f(W) - \IE\{ R(W-\mu)\} \D f(W) + GD\D f(W) - G(f(W')-f(W))\right].
}
Hence~\eq{29} can be bounded by
\ban{
  &\babs{\IE\left[ \left(\IE [GD |\% F_1]-\IE[ GD]\right) \Delta f(W)\right]}  +\IE\abs{ R(W-\mu)} \IE \abs{\Delta f(W)}+\IE\abs{R f(W)} \label{30}
\\
	&\quad +\babs{\IE \left[ G D \Delta f(W) - G  (f(W')-f(W))\right] } . \label{31}
}
It is easy to see that~\eq{30} is equal to~\eq{27}. For~\eq{31}, we observe that
\bes{
 & D \Delta f(W) - (f(W')-f(W)) \\
  &\quad=\I[D>0]\sum_{i=0}^{D-1} (\Delta f(W)- \Delta f(W+i) ) 
     - \I[D<0]\sum_{i=1}^{-D} (\Delta f(W) -\Delta f(W-i))\\
  &\quad= -\I[D>1]\sum_{i=1}^{D-1} \sum_{j=0}^{i-1} \Delta^2 f(W+j) 
      -\I[D<0]\sum_{i=1}^{-D} \sum_{j=0}^{i-1} \Delta^2 f(W-i+j).
}
Using this expression, conditioning on~$\% F_2$, and noting that for~$k\in\IZ$, 
\be{
  \abs{\IE[ \Delta^2 f(W+k)|\% F_2]}\
         \leq \min\left\{\norm{\Delta f} S_1(\law(W|\% F_2)),\,\norm{f} S_2(\law(W|\%F_2))\right\},
}
we find that~\eq{28} upper bounds~\eq{31}.
\end{proof}

\begin{proof}[Proof of Theorem~\ref{thm1}]
The total variation bound \eq{8} is a consequence of 
Proposition~\ref{lem5} and Lemma~\ref{lem8}, together with the bounds from Lemma~\ref{lem6}; c.f. \cite[Theorem~3.1]{Rollin2007} and \cite[Theorem~1.3]{Fang2014}).
To prove \eq{9}, we can argue similarly, but taking~$f = f_a$ and using
Lemma~\ref{lem7} to bound~$\D f_a(\cdot)$.  This yields \eq{9}: the terms in \eq{9} (except for the last one) bound~\eq{27},
and the last term in \eq{9} bounds~\eq{28} and the extra term~$2\s^{-2}$ in Proposition~\ref{lem5}.
\end{proof}

\section{Proofs of applications}\label{sec4}

In this section we prove the application results given in Section~\ref{sec2}.

\subsection{Hoeffding combinatorial local central limit theorem}
Recall the definitions of~$a$, $W$, $\mu$ and~$\sigma^2$ from Section~\ref{sec2} and also Assumptions~A1 and~A2.

The first step is to find a Stein coupling for~$W$. We choose among those discussed in \cite[Section~4.1]{Chen2010}. 
The most direct procedure is to let~$I,J$ be i.i.d.\ uniform on~$\{1,\ldots,n\}$, and to define
$W'\Def W-a_{I \ppi_I}-a_{J \ppi_J} + a_{I \ppi_J} + a_{J \ppi_I}$; then~$(W,W')$ is an exchangeable pair satisfying 
$\IE [W'-W|W]=-\frac{2}{n-1} (W-\mu)$, giving an exact Stein coupling with~$G\Def (n-1) (W'-W)/4$.  However, we use
another exact Stein coupling, that yields a simpler form for~$T$ in~\eq{10}; 
we take
\[
     W' \Def W-a_{I \ppi_I} - a_{J \ppi_J},\ {\ts I\not=J};\quad 
     W' \Def W-a_{I \ppi_I},\ {\ts I=J};\quad G \Def n(a_{I \ppi_J}-a_{I \ppi_I}).
\]
Now,
$\IE[GD|\ppi]= 2n^{-1}\mu(W-\mu) + \sum_{l=1}^4 T_l + n^{-1}\mu^2$, where
\be{
  T_1 \Def \sum_{i} a_{i \ppi_i}^2, \quad T_2 \Def -\frac{1}{n}\sum_{i} a_{i \ppi_i} a_{i +},\quad
  T_3 \Def -\frac{1}{n}\sum a_{+ \ppi_j} a_{j \ppi_j},\quad T_4 \Def \frac1n(W-\mu)^2,
}
and so
\bes{
   \babs{\IE[GD|\ppi]-\IE[GD]} &\leq 2A_1|W-\mu| + T, \quad \mbox{with}\quad T \Def \sum_{l=1}^4 |T_l - \IE T_l|,
}
satisfying condition~\eq{10} with~$k=1$ and~$\k = 2A_1$.
We now consider the remaining conditions to be satisfied in Corollary~\ref{17}\emph{(i),(ii)}.
Since~$R=0$, for~$\emph{(i)}$, we need to bound~$\IE T$ and~$\U$. Note that since
$D \Def  W'-W$ satisfies~$|D| \le 2A_1$, and also that~$\s^{-2}|G| \le 2/(\a_0^2 A_1)$, we have
$\U\leq C \, \sigma^2 \IE[ S_2(\law(W)|\% F_2)]$, 
where we define~$\% F_2$ be the sigma-algebra generated by~$(I,J, \rho_I,\rho_J)$.

\smallskip
\noindent\textbf{The smoothing term.} 
We begin with the smoothing coefficient
$\IE[ S_2(\law(W)|\% F_2)]$; recall Assumption~A2. 
 \begin{lemma}\label{32}
 Under Assumptions A1 and~A2, we have
\be{
\IE [S_2(\law(W)|\% F_2)] = \bigo(n^{-1}).
}
\end{lemma}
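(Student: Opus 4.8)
The plan is to bound $S_2(\law(W)|\%F_2)$ by producing, conditionally on $\%F_2$, a large collection of conditionally independent lattice-valued perturbations inside the permutation, using Assumption A2. Conditionally on $\%F_2 = \sigma(I,J,\rho_I,\rho_J)$, the random variable $W$ is still (essentially) a Hoeffding statistic on the remaining $n-\bigo(1)$ indices with a uniformly random permutation, so it suffices to prove the unconditional estimate $S_2(\law(W)) = \bigo(n^{-1})$ for a Hoeffding statistic whose matrix satisfies A1 and A2, since fixing two rows changes the relevant constants only by a bounded factor and removes only boundedly many of the disjoint pairs in $\cI$. Thus the first step is a conditioning reduction: argue that it is enough to bound $S_2$ for the ``reduced'' statistic, absorbing the $\bigo(1)$ removed rows into the constants of A1 and A2.

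The core step is the smoothness bound itself. Here I would use the standard device for bounding $S_2$ via a coupling: given the random permutation $\rho$, pick one of the $n_1 \ge \a_1 n$ disjoint pairs $\{i_1,i_2\}\in\cI$ uniformly at random, and attempt a transposition of the values $\rho_{i_1},\rho_{i_2}$; by A2 there are $n_2 \ge \a_2 n^2$ choices of $\{j_1,j_2\}$ with $|a_{i_1 j_1}+a_{i_2 j_2}-a_{i_1 j_2}-a_{i_2 j_1}| = 1$, and on the event that $(\rho_{i_1},\rho_{i_2})$ is one of these favourable ordered pairs, swapping $\rho_{i_1}\leftrightarrow\rho_{i_2}$ changes $W$ by exactly $\pm 1$. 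This produces an exchangeable pair $(W,W^\dagger)$ with $W^\dagger - W \in\{-1,0,1\}$ and $\IP(W^\dagger - W = \pm 1 \mid \text{choice of pair, } \rho) \ge c$ for a constant $c>0$ depending only on $\a_1,\a_2$. Iterating this over $\asymp n$ disjoint pairs gives, after conditioning on the positions of all the ``active'' coordinates, an embedded sum of $\asymp n$ conditionally independent $\{0,\pm1\}$-valued steps each with a spread bounded below, which is precisely the kind of structure for which the $\bigo(n^{-1})$ smoothness bound of the Barbour--Xia / McDonald type (and as used in \cite{Rollin2015}) applies; one feeds the embedded-sum structure into a Mineka-coupling or characteristic-function estimate to conclude $S_2(\law(W)|\%F_2) \le C/n$ pointwise on $\%F_2$, hence also in expectation.

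The main obstacle I anticipate is bookkeeping the dependence structure correctly so that the $\asymp n$ perturbations really are conditionally independent: the pairs in $\cI$ are disjoint as index sets, but the favourable value-sets $\cJ(i_1,i_2)$ overlap and the transpositions all act on the same permutation, so one must condition on enough information (essentially, the unordered multiset of values assigned to all active rows, or equivalently work on the orbit of the relevant product of transpositions) to recover a clean product structure before invoking the Mineka coupling. A secondary technical point is that A2 only guarantees span-$1$ increments, not that they are centered or symmetric, so the characteristic-function bound must handle a sum of independent, non-identically-distributed, possibly biased $\{0,\pm1\}$ steps; the key quantitative input is that each step has $1 - |\IE e^{it\,\text{step}}| \ge c(1-\cos t)$ uniformly, which follows from the lower bound on $\IP(\text{step}=\pm1)$, and then $S_2(\law(W)) = \sup_{|h|\le 1}|\IE\Delta^2 h(W)| \le C\int_{-\pi}^{\pi} t^2 \prod|\IE e^{it\,\text{step}_j}|\,dt = \bigo(n^{-1})$ by the usual Gaussian-type decay of the product over $\asymp n$ factors.
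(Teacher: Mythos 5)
Your core idea is the right one, and in fact it is essentially the paper's: the paper realises your ``orbit'' conditioning explicitly by tossing an independent fair coin $B_l\sim\Be(1/2)$ for each pair of $\cI$ avoiding $\{I,J\}$ and composing $\ppi^\circ$ with the corresponding disjoint transpositions, so that, conditionally on $\ppi^\circ$, the statistic becomes $W^\circ+\sum_l B_lC_l(\ppi^\circ)$, where $C_l(\ppi^\circ)$ is the change in $W$ caused by swapping $\ppi^\circ_{i_{l1}}$ and $\ppi^\circ_{i_{l2}}$; smoothness is then read off from the embedded $\Bi(N,1/2)$ component, $N=N(\ppi^\circ)$ being the number of pairs with $\abs{C_l(\ppi^\circ)}=1$. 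The genuine gap in your proposal is that you treat the number of useful (span-$1$) steps as deterministically of order $n$. After the conditioning that makes the steps independent, each step is a two-point variable taking the values $0$ and $C_l(\ppi^\circ)$ with probability $1/2$ each; it helps smoothness only when $\abs{C_l(\ppi^\circ)}=1$, and whether that happens is decided by the permutation, not by the coin. Assumption~A2 only gives the \emph{marginal} bound $\IP(\abs{C_l(\ppi^\circ)}=1)\ge c>0$ for each $l$, so your claim that every factor in the characteristic-function product satisfies a uniform lower bound of the form $c(1-\cos t)$ does not follow and is false on realisations of $\ppi^\circ$ for which few pairs are favourable. Consequently the asserted bound $S_2(\law(W\given\%F_2))\le C/n$ cannot be obtained from your product estimate without first showing that $N\ge cn$ outside an event whose probability is itself $\bigo(n^{-1})$ (on the bad event one only has the trivial bound $S_2\le 4$, so its probability must be $\bigo(n^{-1})$ to preserve the rate).

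This concentration step is the quantitative heart of the paper's proof and is missing from yours. The paper bounds $S_2(\law(W\given\%F_2))\le Ck_n^{-1}+4\IP(N(\ppi^\circ)<k_n)$ with $k_n=\lfloor\tfrac12\IE N(\ppi^\circ)\rfloor$, shows $\IE N(\ppi^\circ)\ge \beta_1 n$ directly from A2 (after discarding the at most two pairs meeting $\{I,J\}$ and the $\bigo(n)$ favourable value-pairs that use $\rho_I,\rho_J$ --- this part of your reduction is fine), and then proves $\Var(N(\ppi^\circ))=\bigo(n)$ by a covariance estimate for the indicators $X_l=\I[\{\ppi^\circ_{i_{l1}},\ppi^\circ_{i_{l2}}\}\in\cJ(i_{l1},i_{l2})]$, namely $\Cov(X_l,X_k)\le C n^{-1}\IP(X_l=1)\IP(X_k=1)$, so that Chebyshev yields $\IP(N<k_n)=\bigo(n^{-1})$ uniformly in the conditioning. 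Your outline, by contrast, stops at the per-pair probability bound; to complete it you must add exactly this lower bound on $\IE N$ together with an $\bigo(n)$ variance bound (or some other concentration argument for the permutation statistic $N$), after which your binomial/Mineka-type smoothing step goes through as in the paper.
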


\begin{proof}
Condition on~$I,J,\rho_I,\rho_J$, let~$\ppi^\circ$ be a uniformly chosen permutation 
given~$\rho^\circ_k=\rho_k$ for~$k=I,J$, and define~$W^\circ=\sum_{i=1}^n a_{i\rho^\circ_i}$ so
that~$\law(W^\circ)=\law(W|\% F_2)$. Now, 
for each~$1\le l\le n_1$ such that neither of~$i_{l1},i_{l2}$ are equal to~$I$ or~$J$, 
independently 
and with probability~$1/2$, multiply~$\ppi^\circ$ by the transposition~$(\ppi^\circ_{i_{l1}},\ppi^\circ_{i_{l2}})$, so that, if 
the multiplication takes place, then~$i_{l2}\mapsto \ppi^\circ_{i_{l1}}$ and~$i_{l1}\mapsto\ppi^\circ_{i_{l2}}$. This process forms a 
new permutation~$\tilde \ppi$, which is still uniformly distributed given~$\tilde \rho_k=\rho_k$ for~$k=I,J$, so that~$\tW\Def \sum_{i=1}^n a_{i \tilde\ppi_i}$ 
has the same distribution as~$W^\circ$. Moreover, writing
\[
   C_l(\ppi^\circ) \Def 
       \bklr{a_{i_{l1},\ppi^\circ_{i_{l2}}}+a_{i_{l2},\ppi^\circ_{i_{l1}}}-a_{i_{l1},\ppi^\circ_{i_{l1}}}-a_{i_{l2},\ppi^\circ_{i_{l2}}}},
\]
we have for~$E_{IJ}\Def \left\{l : 1\leq l \leq n_1, \{i_{l1},i_{l2}\}\cap\{I,J\}=\emptyset\right\}$,
\be{
  \tW = W^\circ +\sum_{l\in E_{IJ}} B_l C_l(\ppi^\circ),
}
where~$B_1,\ldots, B_{n_1}$ are i.i.d.\ Bernoulli~$\Be(1/2)$ random variables, independent of~$\rho$.
Defining 
\be{
N(\ppi^\circ) \Def \abs{\{l\colon \{{\ts i_{l1},i_{l2}\}\cap\{I,J\}=\emptyset, \abs{C_l(\ppi^\circ)}=1}\}},
}
it thus follows immediately that, on the
event~$\{N(\ppi^\circ) \ge k_n\}$, we have
\be{
              S_2(\law(\tW\giv\rho^\circ)) \leq S_2(\Bi(k_n,1/2)) \leq 10k_n^{-1},
}
where the last inequality is 
\cite[Proposition~3.8]{Rollin2015}.

Taking expectations, this in turn implies that
\be{
  S_2(\law(W|\% F_2)) = S_2(\law(W^\circ)) = S_2(\law(\tW)) \leq 8 k_n^{-1} + 4\IP(N(\ppi^\circ) < k_n). 
}
Defining~$k_n \Def  \lfloor \tfrac12 \IE N(\ppi^\circ)\rfloor$, we  show that, for suitable~$\b_1 > 0$, $\b_2 < \infty$, we have~$k_n \ge \b_1 n$ and~$\IP(N(\ppi^\circ) < k_n) \le \b_2 n^{-1}$ for large~$n$, thus completing the proof of the lemma.

Note that ~$\rho^\circ$ is a uniformly chosen map from
~$\{1,\ldots,n\}\setminus\{I,J\}$ to~$\{1,\ldots,n\}\setminus\{\rho_I, \rho_J\}$. Thus, from our assumption on the matrix~$a$,
\bes{
  \IE N(\ppi^\circ)&\geq -2+  \sum_{l\in E_{IJ}}   \IP\bklr{\ts\abs{C_i(\ppi^\circ)}=1}\\
      & = -2+ \sum_{l\in E_{IJ}}   \IP\bklr{\{\ppi^\circ_{i_{l1}},\ppi^\circ_{i_{l2}}\} \in \cJ(i_{l1},i_{l2})} \\
     &\geq -2+ \frac{(n_1-2)(n_2-2n)}{n(n-1)} \geq n \left(\left(\a_1-\frac{2}{n}\right)\left(\a_2-\frac{2}{n}\right)-\frac{2}{n}\right),
}
and so~$k_n\ge \b_1 n$ for~$n$ large, with~$\b_1 \Def  \a_1\a_2/4$.  Then, by Chebyshev's inequality, we have the upper bound
\ben{\label{33}
  \IP({\ts N(\ppi^\circ)\leq k_n}) 
      \leq  \frac{\Var(N(\ppi^\circ))}{(\IE N(\ppi^\circ) - k_n)^2} = \frac{4\Var(N(\ppi^\circ))}{(\IE N(\ppi^\circ))^2}.
}
It is now enough to show that~$\Var(N(\ppi^\circ))\leq Cn$, for some~$C < \infty$.

To bound~$\Var(N(\ppi^\circ))$, we need to compute the covariance of~$X_l$ and~$X_{k}$ for~$l\not= k$, both in~$E_{IJ}$, where
\[
     X_r \Def \I\bklr{\{\ppi^\circ_{i_{r1}},\ppi^\circ_{i_{r2}}\} \in \cJ(i_{r1},i_{r2})}.
\]
Now, given~$X_l=1$ and~$(\ppi^\circ_{i_{l1}},\ppi^\circ_{i_{l2}}) = (j_1,j_2)$, the conditional probability that~$X_k=1$ can be no
larger that~$\IP(X_k=1) (n-2)(n-3)/(n-4)(n-5)$, since pairs that are excluded by having~$(j_1,j_2)$ as images of
$i_{l1}$ and~$i_{l2}$ under~$\ppi$ would only reduce the conditional probability, and the probability of an
accessible pair being attained is increased by the factor~$(n-2)(n-3)/(n-4)(n-5)$.  Hence
\ba{
     \Cov(X_l,X_k) &\leq \IP({\ts X_l=1}) \IP({\ts X_k=1})\,\left\{\frac{(n-2)(n-3)}{(n-4)(n-5)} - 1 \right\} \\
                   &\leq \IP({\ts X_l=1}) \IP({\ts X_k=1}) \,\frac{2(2n-7)}{(n-4)(n-5)},
}
so that, for~$n\ge28$,
\bes{
\Var(N(\ppi^\circ))&= \sum_{l\in E_{IJ}}  \Var(X_l)+ \sum_{l\not=k; l,k\in E_{IJ} } \Cov(X_l,X_k) \\
	&\leq \IE N(\ppi^\circ) + 5n^{-1}\{\IE N(\ppi^\circ)\}^2 = \bigo(n).
}
This proves the lemma.
\end{proof}

As a consequence of Lemma~\ref{32} and the remarks preceding it, we have~$\U = \bigo(1)$, and we now show
$\IE T = \bigo(\s)$, after which, Corollary~\ref{17}\emph{(i)} implies
\ben{\label{34}
   \dtv\left(\law(W),\TP\bklr{\IE W, \Var(W)}\right) = \bigo\left(1/\s\right).
}
  Observing first that
\[
     \IE[T\I[{\ts W=k}]] = \sum_{l=1}^4 \IE[|T_l - \IE T_l|\I[{\ts W=k}]],
\]
we apply Lemma~\ref{lem4} to the first three terms; for the fourth, we immediately have
\[
   \IE\bigl[|T_4 - \IE T_4|\I({\ts W = k})\bigr] \leq 2\IE T_4 = n^{-1}\s^2 = O(1),
\]
so that this element of~$T$ gives a contribution to the error bound of order~$\bigo(\s^{-2})$.

The remaining elements each
have the form~$\babs{\sum_i \tilde a_{i \ppi_i}- \IE \sum_i \tilde a_{i \ppi_i}}$,
for appropriate choices of~$\tilde a_{ij} \leq A_1^2$ (take~$\tilde a_{ij}$ to be~$a_{i j}^2$, $a_{i j} a_{i +}/n$ and 
$a_{i j} a_{ + j}/n$, respectively). It thus follows from~\eq{19} that
\[
     \IE|T_l - \IE T_l| \leq \sqrt{\Var T_l} = \bigo(\sqrt n),\quad\ts 1\le l\le 3,
\]
and hence that~$\IE T = \bigo(\s)$, as desired.
 
For the local approximation, 
Lemma~\ref{32} and~\eq{34} imply that Condition~(1) of 
of Corollary~\ref{17}\emph{(ii)} is satisfied with~$\a=1$.  For Condition~(2), under Assumption~A1,
\cite[Proposition~1.2]{Chatterjee2007} implies that, for any~$j\geq 0$, 
\be{
   \s^{-2j}\IE (W-\mu)^{2j} \leq (2j-1)^{2j} A_1^{2j} \s^{-2j} n^{j} = \bigo(1). 
}
Finally, we (essentially) use Condition~(3b) for the~$T$ term and treat~$T_1, T_2$ and~$T_3$ using concentration bounds.
Under Assumption~A1, 
\cite[Proposition~1.1]{Chatterjee2007} and \cite[Theorem~3.1]{Goldstein2014} 
imply, for~$l=1,2,3$, that
\bes{
  \IP\bbklr{\ts(A_1\s)^{-1}\abs{T_l - \IE T_l}\geq t}
     &\leq 2 \exp\left\{ -\frac{ t^2}{2 (v_l/(A_1\sigma)^2) + 16(A_1/\sigma)t}\right\},
}
where~$v_l=\Var(T_l)$, and ~$v_l/(A_1\s)^2 \le C_l$ for some suitable~$C_l$.  We can then 
apply Lemma~\ref{lem4}, taking
\[
     \eps_l(t) = t\bF_l(t) + \int_t^\infty \bF_l(v)\,dv,
\]
where 
\[
     \bF_l(t) \Def 2 \exp\left\{ -\frac{ t^2}{2 C_l + (16/\a_0)tn^{-1/2}}\right\},
\]
and the choice~$t = C'_l\sqrt{\log\s}$, for~$C'_l$ suitably large but fixed, gives 
\[
     \IE[|T_l - \IE T_l|\I[{\ts W=k}]] \leq  \s\eps_l(t) + t = \bigo(\sqrt{\log \s}).
\]
Hence, from Corollary~\ref{17},  under Assumptions A1 and~A2, we have
\[
   \dloc\left(\law(W),\TP\bklr{\IE W, \Var(W)}\right) = \bigo\left(\frac{\sqrt{\log(\s)}}{\s^2}\right).
\]

\subsection{Number of isolated vertices in an \ER\ random graph}

Let~$\cG\Def \cG(n,p)$ be an \ER\ random graph on~$n$ vertices~$v_1,\ldots,v_n$, and let~$W$ be the number 
of isolated vertices in~$\cG$. Let~$W^s$ have the size-biased distribution of~$W$. Then~$(W,W',G)=(W,W^s, \IE W)$ 
is a Stein coupling. To couple~$(W,W^s)$, construct~$W^s$ from~$\cG$ by choosing a vertex at random 
and erasing all edges (if any) connected to the vertex.
Recall from the introduction that
we consider the regime~$p\asymp \lambda/n$ for some~$\lambda>0$, in which 
case~$\mu \sim ne^{-\la}$ and~$\sigma^2 \sim ne^{-\la}\{1 + (\la-1)e^{-\la}\}$ are of strict order~$n$.

Let~$E_i$ be the event that vertex~$v_i$ is not isolated in~$\cG$, let~$W_1(v)$ be the number of degree-$1$ vertices
connected to vertex~$v$ of~$\cG$, and let~$W_1$ be the number of degree-$1$ vertices in~$\cG$. To check~\eq{10}, 
we observe that

\bes{
  \babs{\IE[GD| \cG]-\sigma^2}&= \bbabs{\frac{\mu}{n}\sum_{i=1}^n \bklr{W_1(v_i)+\I\{E_i\}}-\sigma^2} \\	
	&= \bbabs{\frac{\mu}{n}\bklr{W_1+(n-W)}-\sigma^2}.
}
Since~$\IE[GD] = \s^2$, which is \eq{7} for a Stein coupling (thus with~$R=0$), it follows that	
\bes{
  \babs{\IE[GD| \cG]-\sigma^2} &= \bbabs{\frac{\mu}{n}\bbklr{(W_1 - \IE W_1) - (W - \IE W)}}\\
	&\leq   (1-p)^{n-1}\{\abs{ W-\mu} + \abs{W_1-\IE W_1}\} \\
	&\leq \abs{ W-\mu}+   \abs{W_1-\IE W_1}, 
}
which is~\eq{10} with~$k=1$, $\k=1$ and~$T=\abs{W_1-\IE W_1}$.

To apply Corollary~\ref{17}, we need to show that~$\s^{-j}\IE|W-\mu|^j = \bigo(1)$ for suitable values
of~$j$.  To do so,
and also to show that the distribution of~$T$ is concentrated,
we take~$d=0$ and~$d=1$ in the following theorem of  \cite{Bartroff2015} 
(see also \cite{Arratia2015}).

\begin{theorem}\label{thm2}
For any integer~$d\geq0$, let~$W_d$ be the number of degree~$d$ vertices in an \ER\ random graph~$\cG$ with parameters 
$n$ and~$p$. Then, for any~$t>0$,
\be{
  \IP\bklr{\abs{W_d-\IE W_d}> t}\leq 2 \exp\left\{-\frac{t^2}{4(n-\IE W_d) +(4/3)t}\right\}.
}
\end{theorem}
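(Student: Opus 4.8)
The plan is to derive the bound from the general concentration inequality for nonnegative random variables that possess a bounded size-biased coupling --- the method of \cite{Bartroff2015} (see also \cite{Arratia2015}) --- applied not to $W_d$ directly but to the complementary count $Y \Def n - W_d$, the number of vertices of $\cG$ whose degree differs from~$d$. Since $\mu \Def \IE Y = n - \IE W_d$ and $|W_d - \IE W_d| = |Y - \mu|$, it suffices to bound the upper and lower tails of $Y - \mu$ separately and add.

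Recall the tool: if $Y \ge 0$ has finite mean $\mu$ and can be coupled with a variable $Y^s$ having the $Y$-size-biased distribution so that $Y \le Y^s \le Y + C$ almost surely, then for all $t > 0$
\[
  \IP(Y \ge \mu + t) \le \exp\left(-\frac{t^2}{2C\mu + (2C/3)t}\right),
  \qquad
  \IP(Y \le \mu - t) \le \exp\left(-\frac{t^2}{2C\mu}\right).
\]
This follows by writing the size-bias identity $\IE[Y\e^{\theta Y}] = \mu\,\IE[\e^{\theta Y^s}]$ as $m'(\theta) = \mu\,\IE[\e^{\theta(Y^s-Y)}\e^{\theta Y}]$ for $m(\theta) \Def \IE \e^{\theta Y}$, bounding $\e^{\theta(Y^s-Y)}$ by $\e^{\theta C}$ (resp.\ by $1$) according to the sign of $\theta$ to obtain $\log m(\theta) \le \tfrac{\mu}{C}(\e^{\theta C}-1)$, and then optimising the resulting Chernoff bounds in the standard Bennett/Bernstein manner. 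Taking $C = 2$ turns the two exponents into $t^2/(4\mu + (4/3)t)$ and $t^2/(4\mu)$; since the latter dominates the former, a union bound over the two tails gives $\IP(|Y-\mu|>t) \le 2\exp(-t^2/(4(n-\IE W_d)+(4/3)t))$, which is the assertion.

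It remains to build the coupling with $C = 2$. Pick a vertex $v_I$ of $\cG$ uniformly at random. Because the events $\{\deg(v_i)\ne d\}$ are exchangeable, the size-biased $Y^s$ is realised as the number of non-degree-$d$ vertices of a graph $\cG^s$ distributed as $\cG$ conditioned on $\{\deg(v_I)\ne d\}$; and since the edges incident to $v_I$ are independent of all other edges, this conditioning only reshuffles $v_I$'s edges. If $\deg^{\cG}(v_I)\ne d$ already, set $\cG^s=\cG$. Otherwise, flip a single edge incident to $v_I$ --- adding or deleting it, and choosing the other endpoint --- so as to move $\deg(v_I)$ off $d$ while ensuring the other affected vertex is not pushed onto degree $d$; a short case analysis, exploiting the exchangeability of the non-$v_I$ edges, shows that such a move can always be chosen and that the resulting configuration at $v_I$ has exactly the required conditional law. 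A single flipped edge alters the degrees of precisely two vertices, so $0 \le Y^s - Y \le 2$.

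The step I expect to be the real obstacle is this last one: verifying that a \emph{single} edge modification always suffices --- that in every graph one can avoid pushing the neighbouring vertex onto degree $d$ (or otherwise retreat to an equally cheap move) \emph{and} that, averaging over the random vertex, the construction reproduces the size-biased law of $Y$ exactly. Everything else --- the passage $W_d \leftrightarrow n-W_d$ and the exponential-moment calculus --- is routine. (If one is content with the correct order but not the sharp constants, an edge-exposure martingale together with Freedman's inequality already yields a bound of the shape $\exp(-\Theta(t^2/(n+t)))$, which is all that the applications below actually require.)
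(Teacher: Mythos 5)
First, for orientation: the paper does not prove this statement at all --- it is quoted from Bartroff, Goldstein and I\c{s}lak (2015) (see also Arratia and Baxendale (2015)) and then applied with $d=0,1$ --- so what you have written is in effect an attempt to reconstruct the cited reference's argument. Your overall frame is the right one: pass to $Y \Def n-W_d$, assume a size-bias coupling with $Y\le Y^s\le Y+C$ and $C=2$, and run the Bennett--Bernstein computation, which does yield the exponent $t^2/(4(n-\IE W_d)+(4/3)t)$ for the upper tail and a stronger lower-tail bound, hence the stated inequality after a union bound. (One small slip there: for $\theta<0$, bounding $\e^{\theta(Y^s-Y)}$ by $1$ gives $(\log m)'(\theta)\le\mu$ and hence only the lower bound $\log m(\theta)\ge\mu\theta$; the correct step for the lower tail again uses $Y^s\le Y+C$, via $\e^{\theta(Y^s-Y)}\ge\e^{\theta C}$, so that $(\log m)'(\theta)\ge\mu\e^{\theta C}$, which upon integrating from $\theta$ to $0$ gives the claimed bound $\log m(\theta)\le\tfrac{\mu}{C}(\e^{\theta C}-1)$.)

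The genuine gap is exactly where you suspected it: the coupling, and as described it provably cannot work. The modified graph must have marginal law $\law(\cG\given \deg(v_I)\ne d)$, which gives every graph $g$ with $\deg_g(v_I)\ne d$ mass $\IP(\cG=g)/\IP(\deg(v_I)\ne d)>\IP(\cG=g)$. If you set $\cG^s=\cG$ whenever $\deg_\cG(v_I)\ne d$, then for each such $g$ the deficit $\IP(\cG=g)\,\IP(\deg(v_I)=d)/\IP(\deg(v_I)\ne d)$ must be supplied from the event $\{\deg_\cG(v_I)=d\}$; but a single edge flip at $v_I$ only reaches graphs whose degree at $v_I$ is $d\pm1$, so graphs with $\deg_g(v_I)\notin\{d-1,d,d+1\}$ can never receive the missing mass. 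Equivalently, under the ``keep the graph if already $\ne d$'' rule, the residual conditional law on $\{\deg_\cG(v_I)=d\}$ is a fresh draw from the full conditional distribution, whose degree at $v_I$ ranges over all values $\ne d$, so no case analysis over single flips can be exact. Constructing an exact coupling with $Y^s\le Y+2$ therefore requires either modifying the graph even when $\deg(v_I)\ne d$ or a genuinely different construction --- and this is precisely the nontrivial content of the cited theorem, which your sketch does not supply. Your fallback (edge-exposure martingale with Freedman's inequality) is viable for the paper's purposes only after the conditional-variance computation giving quadratic variation of order $n^2p\asymp n$ (plain bounded differences gives only $\exp(-ct^2/n^2)$, which is useless here); with that in place it would indeed suffice for the applications, which need only $d=0,1$ and Gaussian-type tails at scale $\sigma\asymp\sqrt{n}$ for $t$ up to a multiple of $\sqrt{\log\sigma}$, albeit without the sharp constants of the stated theorem.
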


So, for any value of~$d$, we have
\bes{
   \IP\bklr{\ts\s^{-1}\abs{W_d-\IE W_d} \geq t} 
     &\leq 2 \exp\left\{-\frac{t^2}{4\frac{n}{\sigma^2} +\frac{4}{3\sigma} t}\right\} \\
     &\leq \h_n(t) \Def  2  \exp\left\{-\frac{t^2}{4\ess +4t\sqrt{\ess/n}/3}\right\},
}
where~$\ess$ is an upper bound for~$n/\s^2$.  It follows easily, taking~$d=0$, that 
$\s^{-k}\IE|W-\mu|^k = \bigo(1)$ for all~$k\ge1$, and then, taking~$d=1$ and
\[
     \eps_n(t) \Def t\h_n(t) + \int_t^\infty \h_n(v)\,dv,
\]
that~$\int_1^\infty \eps_n(t)\,dt \le \int_1^\infty \eps_1(t)\,dt < \infty$ for all~$n\ge1$, so that 
$\IE T^2 = \bigo(\s^2)$. Since also~$R=0$ almost surely, and since \cite[Lemma~4.7]{Rollin2015} shows that
\be{
   S_2\bklr{\law(W)} = \bigo(\sigma^{-2}),
}
then once we show~$\U = \bigo(1)$, all the hypotheses and conditions of Corollary~\ref{17}\emph{(i),(ii)} except for~(3) of \emph{(ii)} are satisfied, with~$\a = 1$.
But a variation of Condition~(3b) is satisfied: for~$t = t_n = c\sqrt{\log\s}$, it is easy to check that
$\eps_n(t_n) = \bigo(n^{-1/2})$ if~$c>0$ is chosen fixed but large enough, so that, from
Lemma~\ref{lem4}, $\IE[T\I[W=k]] = \bigo(\sqrt{\log\s})$, and thus 
the contribution from~$T$ is at most of order~$\bigo(\s^{-2}\sqrt{\log\s})$.

All that is left is to to show that~$\U = \bigo(1)$, for which we follow \cite{Fang2014}. Let~$I$, uniformly distributed on~$\{1,\ldots, n\}$, 
be the index
of the vertex of~$\cG$ chosen to be isolated in constructing~$W^s$, and for~$k=0,1,2$, let~$\cN\sp i_k$ be the 
set of vertices at distance~$k$ from vertex~$v_i$ in~$\cG$.  Then let~$\% F_2$ be the sigma algebra generated by 
$\bigl(I, \cN\sp I_1, \cN\sp I_2\bigr)$ and the presence or absence of all edges that have one or more vertices
 in~$\{I\}\cup\cN\sp I_1$. 
Clearly~$W^s-W$ is~$\% F_2$-measurable.
To bound~$\U \Def  \IE \bkle{\abs{G D(D-1)}S_2(W|\%F_2)}$, consider the expectation on the 
event~$\{\abs{\cN_1\sp I}> \sqrt{n}\}$ and on its complement.

First, note that~$\abs{D}=\abs{W^s-W}\leq 1 + \abs{\cN_1\sp I}$, so that
\bes{
  \IE \bbkle{\abs{G D(D-1)}S_2(W|\%F_2)\I\bklg{\abs{\cN_1\sp I}> \sqrt{n}}}
    &\leq C n \IE \bbkle{(1+\babs{\cN_1\sp I}^2)\I\bklg{\abs{\cN_1\sp I}> \sqrt{n}}}.
}
Since~$\abs{\cN_1\sp I} \sim \Bi(n-1,p)$, and~$p \sim \la/n$,
\be{
    \IE\bbkle{ \abs{\cN_1\sp I}^k} \leq C_k\quad\text{for all~$n\ge1$},
}
for suitable constants~$C_k$, so that
$$
     \IE \bbkle{(1+\babs{\cN_1\sp I}^2)\I\bklg{\abs{\cN_1\sp I}> \sqrt{n}}} = \bigo(n^{-k/2})
$$
for all integers~$k\geq1$.

For the complementary event, we show that, for some universal constant~$C$, 
\ben{\label{35}
    S_2(W|\%F_2)\I\bklg{{\ts\abs{\cN_1\sp I} \leq \sqrt{n}}} \leq C \sigma^{-2}, \text{a.s.}
}
If this is the case, then 
\be{
  \IE \bbkle{\abs{G D(D-1)}S_2(W|\%F_2)\I\bklg{{\ts\abs{\cN_1\sp I} \leq  \sqrt{n}}}} 
     \leq C n \sigma^{-2} \IE\bbkle{(1+ \abs{\cN_1\sp I}^2)} = \bigo(1),
}
as desired. For~\eq{35}, the basic idea is that there still remain
almost~$\binom{n}2$ edges to be independently assigned, and the 
methods leading to \cite[Lemma~4.7(i)]{Rollin2015} can be applied to give the required order.

From now on, we have~$\abs{\cN_1\sp I}\leq  \sqrt{n}$.
Given~$\% F_2$, define a new random graph~$\tcG$ on~$n$ vertices labeled~$\{v_1,\ldots,v_n\}$ such 
that all edges with an endpoint in~$V(I) \Def  \bklg{v_i\colon i\in\{I\}\cup\cN\sp I_1}$  are determined by~$\% F_2$, and  
the remaining edges, those in~$E(I) \Def  \{\{i,j\}\colon i,j \notin V(I)\}$,  
are assigned using i.i.d.~$\Be(p)$ variables; we let~$\tcG(I)$ denote the graph~$\tcG$ restricted to~$E(I)$. 
Note that the number of edges in~$E(I)$ is 
\be{
  \binom{n-\abs{\cN\sp I_1}-1}{2} \sim \frac12\,n^2,
}
because~$\abs{\cN\sp I_1}\leq  \sqrt{n}$.
 Let~$\tcG'$ be the graph obtained by choosing at random  one of the 
 edges of~$E(I)$  and resampling it, and let~$\tcG''$ be the graph  
obtained from the same operation applied to~$\tcG'$.  Let~$\tW,\tW', \tW''$ be the number of 
isolated vertices in~$\tcG, \tcG', \tcG''$. Then~$\law(\tW)=\law\bklr{W\vert \% F_2}$, 
and~$(\tW, \tW', \tW'')$ are three successive states of a reversible Markov chain. 
Thus \cite[Theorem~3.7]{Rollin2015} implies that 
\bes{
   &S_2\bklr{\law(W|\%F_2)} \\
    &\quad\leq \frac{1}{\IP(\tW'=\tW+1)^2}\bbbkle{ 2\Var\bbklr{\IP\bklr{\ts\tW'=\tW+1|\tilde \cG}} 
           +2\Var\bbklr{\IP\bklr{\ts\tW'=\tW-1|\tilde \cG}} \\
   &\hspace{4.25cm} +\IE \bbabs{
           \IP\bklr{\ts\tW''=\tW'+1, \tW'=\tW+1\vert \tilde \cG}-\IP\bklr{\tW'=\tW+1\vert \tilde \cG}^2} \\
   &\hspace{4.5cm}+\IE \bbabs{
          \IP\bklr{\ts\tW''=\tW'-1, \tW'=\tW-1\vert \tilde \cG}-\IP\bklr{\ts\tW'=\tW-1\vert \tilde \cG}^2}
                 }.
} 
Bounds on the first two terms 
are given by \cite[Inequalities~(2.23)-(2.25)]{Fang2014}, which yield 
\be{
   \IP({\ts\tW'=\tW+1})\geq Cn^{-1} \,\, \mbox{ and } \,
            \Var\bbklr{\IP\bklr{{\ts\tW'=\tW\pm 1}\given\tilde \cG}} \leq Cn^{-3}.
}

For the last two terms of the bound, let~$\cV_1\sp I$
be the set of vertices having degree one in both of~$\tcG$ and~$\tcG(I)$, and let~$\hcV_1\sp I$ be the subset 
of these vertices that are connected to a vertex having degree two in both of~$\tcG$ and~$\tcG(I)$;
write~$\tW_1\sp I \Def  |\cV_1\sp I|$ and~$\hW_1\sp I \Def  |\hcV_1\sp I|$. 
Let~$\cE_2\sp I$ be the set of edges that are  isolated in both~$\tcG$ and~$\tcG(I)$, 
and let~$\cE_3\sp I$ be the set of pairs of connected edges that are isolated in both~$\tcG$ and~$\tcG(I)$;
denote their numbers by~$E_2\sp I$ and~$E_3\sp I$ respectively. 
  Note that no vertices of~$N_1\sp I$ are isolated, but that~$v_I$ may be isolated (and then~$N_1\sp I$
is empty); note also that the endpoints of elements of~$\cE_3\sp I$ belong to~$\hcV_1\sp I$.

Now the only way to increase the number of isolated vertices in going from~$\tcG$ to~$\tcG'$ is to choose 
a non-isolated edge connected to a degree one vertex, and then remove it; 
however, the number of isolated vertices
increases by~$2$ if the edge removed belongs to~$\cE_2\sp I$.  Hence, writing~$\hn(I) \Def  n-\abs{\cN_1\sp I}-1$,
we have
\be{
   \IP\bklr{{\ts \tW'=\tW+1 \giv \tilde \cG}} = \frac{(\tW_1\sp I-2\tE_2\sp I)}{\binom{\hn(I)}{2}}\,(1-p).
}
Considering the different ways of increasing the number of isolated vertices by exactly one in consecutive steps
is more complicated;  isolating a vertex in~$\hcV_1\sp I$ leaves the number of vertices
of degree~$1$ unchanged, so that~$(\tW_1\sp I)' = \tW_1\sp I$,  but if the vertex belonged to an element
of~$\cE_3\sp I$, then~$(\tE_2\sp I)' = \tE_2\sp I + 1$. Hence
\bes{
   &\IP\bklr{{\ts \tW''=\tW'+1, \tW'=\tW+1\given \tcG}} = 
          \frac{\bklr{\tW_1\sp I-2\tilde E_2\sp I-\tilde V\sp I_1}\bklr{\tW_1\sp I - 1 -2\tilde E_2\sp I}}
                    {\binom{\hn(I)}{2}^2}(1-p)^2 \\
   &\qquad+\frac{\bklr{\tilde V\sp I_1-2\tilde E_3\sp I}\bklr{\tW\sp I_1-2\tilde E_2\sp I}}
                    {\binom{\hn(I)}{2}^2}(1-p)^2
          +\frac{2\tilde E_3\sp I\bklr{\tW\sp I_1-2\tilde E_2\sp I-2}}
                    {\binom{\hn(I)}{2}^2}(1-p)^2,
}
so that
\bes{
   &\IE \bbabs{\IP\bklr{{\ts\tW''=\tW'+1, \tW'=\tW+1\given \tcG}}-\IP\bklr{\tW'=\tW+1\given \tcG}^2} \\
    &\qquad\leq \frac{\IE \abs{\tW_1\sp I-2\tE_2\sp I}+\IE \hW_1\sp I+4\IE \tE_3 \sp I}
               {\binom{\hn(I)}{2}^2}\,(1-p)^2 = \bigo(n^{-3}),
}
since~$\abs{\tW_1\sp I-2\tE_2\sp I}\leq \tW_1\sp I\leq n$, $\hW_1\sp I\leq n$, $\tE_3\sp I\leq n$, and 
$\abs{\cN_1\sp I}\leq  \sqrt{n}$. 
Similarly, but more easily, we  have
\be{
    \IP\bklr{{\ts \tW'=\tW-1 \giv \tcG}} = \frac{(\tW-\I[\deg(v_I)=0])(\hn(I)-\tW+\I[\deg(v_I)=0])}
                {\binom{\hn(I)}{2}}\,p,
}
and 
\be{
     \IP\bklr{{\ts\tW''=\tW'-1, \tW'=\tW-1 \given \tcG}} = 
       \frac{4\binom{\tW-\I[\deg(v_I)=0]}{2}\binom{\hn(I)-\tW+\I[\deg(v_I)=0])+1}{2}}{\binom{\hn(I)}{2}^2}\,p^2,
}
so that
\bes{
&\IE \bbabs{{\ts\IP\bklr{\tW''=\tW'-1, \tW'=\tW-1\given \tcG}}-\IP\bklr{{\ts \tW'=\tW-1\vert \tcG}}^2} \\
    &= \frac{ (\tW-\I[\deg(v_I)=0])(\hn(I)-\tW+\I[\deg(v_I)=0]) (\hn(I)-2\tW+2\I[\deg(v_I)=0]+1)p^2}
           {\binom{\hn(I)}{2}^2},
}
which is again~$\bigo(n^{-3})$, since~$0\leq \tW \leq n$.
Therefore~$S_2\bklr{\law(W|\%F_2)}=\bigo(n^{-1})$ almost surely, as desired.

\subsection{Curie--Weiss}

Recall from Section~\ref{sec2} the definition of the Curie--Weiss distribution, the magnetization~$W$, and associated 
discussion.
Assume that either~$h>0$ and~$\beta>0$, or that~$h=0$ and~$0<\beta<1$.
Define the exchangeable pair~$(W,W')$ as follows. Let~$I$ be uniform on~$\{1,\ldots,n\}$. Given~$I=i$ and~$S=s$, let 
\be{
    \IP({\ts S_i'=x})=\IP\bklr{{\ts S_i=x\given(S_j)_{j\not=i}=(s_j)_{j\not=i}}}
}
for~$x=\pm1$. Defining~$W'\Def W-S_I+S_I'$, we note that~$(W,W')$ are two consecutive states of a stationary Gibbs sampler, and so form an 
exchangeable pair. 
Note that~$W$ actually sits on a lattice of span~$2$ (even or odd numbers, depending on~$n$), so that our 
eventual conclusion concerns ~$\tW \Def  (W+\tfrac12\{1-(-1)^n\})/2$.

We next want to establish an approximate linear regression, so as to determine an approximate Stein coupling.
From \cite[Page~315]{Chatterjee2007} (see also \cite[(7.10)]{Ross2011}),  for~$\mu_n\Def \IE W$, we have
\ban{
   &\IE[W'-W\given S] = -\frac{1}{n} W +\frac{1}{n} \sum_{i=1}^n \tanh\left(\frac{\beta}{n} (W-S_i) + h\right)
                  \nonumber \\
  &\qquad = -\frac{1}{n} (W-\mu_n)+ \frac{1}{n}\sum_{i=1}^n \left(\tanh\left(\frac{\beta}{n} (W-S_i)+ h\right)
                   -\tanh\left(\frac{\beta}{n} W+ h\right)\right) \nonumber\\
  &\qquad\qquad +\tanh\left(\frac{\beta}{n} W+ h\right)-\tanh\left(\beta m_h+ h\right) +(m_h-\mu_n/n).&\label{36}. 
}
Now since, $0 \le \frac d{dx}\tanh(x)=1-\tanh^2(x) \le 1$ and~$|\frac{d^2}{dx^2}\tanh(x)| \le 1$, by Taylor expansion we have 
\bg{
   |\tanh(\b w + h) - \tanh(\b m + h) - \b(w-m)(1-\tanh^2(\b m+h))| \leq C|w-m|^2;\\
      |\tanh(\b(w-s)+h) - \tanh(\b w + h)| \leq \b s,
}
from which it follows that 
\ben{
   \Bigl|\tanh\left(\frac{\beta}{n} W+ h\right)-\tanh\left(\beta m_h+ h\right)  - n^{-1}\b(W - nm_h)(1 - m_h^2)\Bigr| 
             \leq C\left(\frac Wn-m_h\right)^2, \label{37}
}
and
\ben{
    \bbbabs{  \frac{1}{n}\sum_{i=1}^n \left(\tanh\left(\frac{\beta}{n} (W-S_i)+ h\right)-\tanh\left(\frac{\beta}{n} W+ h\right)\right)}
              \leq \frac{\b} n. \label{38}
}
This gives an approximate linear regression~$\IE[W'-W \giv W]  = -a(W-\mu_n) + aR$, with
\ben{\label{39}
   a \Def n^{-1}(1-\beta (1-m_h^2));\quad |R| \leq
     R' \Def \frac{\beta}{1-\beta (1-m_h^2)} + \abs{\mu_n-n m_h}+\frac{C n \abs{W/n-m_h}^2}{1-\beta (1-m_h^2)},
}
and the approximate Stein coupling is completed by taking~$G\Def (W'-W)/(2a)$. 
Below we work on~$(W,W', G,R)$, but note that 
all results easily transfer to~$(\tW,\tW',\tG,\tR)$ where~$\tW$ is as above,
$\tW'$ is defined in the obvious way, $\tG=G/2$ and~$\tR'=R'/2$. In this case, $(\tW,\tW')$ satisfy~\eq{5}
and~$\abs{\tW'-\tW}\leq 1$, so we apply our approximation
framework,
 using Remark~\ref{rem1}.

The first step is to bound the centred moments of~$n^{-1/2}(W - nm_h)$.
From \cite[Proposition~1.3]{Chatterjee2007}, for any fixed~$k\ge1$,
\ben{
    \IE \bbbabs{\frac{W}{n}-\tanh\left( \beta \left(\frac{W}{n}\right)+h\right)}^k \leq \bigo(n^{-k/2}).
    \label{40}
}
Now, for~$y$ small enough, there exists~$C'_y < \infty$ such that
\[
     |w - \tanh(\b w + h)| \geq C'_y|w - m_h| \quad\mbox{in}\quad \ts |w-m_h| \le y.
\]
On the other hand, \cite[Theorem~1.4]{Dembo2010a} show that 
\be{
     \IP\bbbbklr{\bbbabs{\frac{W}{n}-m_h}>t} \leq e^{-n C(t)},
}
for some~$C(t)>0$, so that~$\IP(|n^{-1}W - m_h| > y) = \bigo(e^{-nC(y)})$.
Combining these last two statements, it follows from~\eq{40} that,
for any~$k\ge1$,
\ben{\label{41}
    \IE \bbbabs{\frac{W-nm_h}{\sqrt n}}^k = \bigo(1).
} 

Now we turn to verifying~\eq{10}; we use the representation in Remark~\ref{rem1}.
According to \cite[Lemma~4.4]{Rollin2015}, and using~\eq{41},
\ba{
   \bbbabs{\IP\bklr{{\ts W'-W=2|S}}-\frac{(1-m_h)^2}{4}}
                  &\leq C n^{-1/2} \left(\frac{\abs{W-\mu_n}}{\sigma_n}+n^{-1/2}\right), \\
   \bbbabs{\IP\bklr{{\ts W'-W=2}}-\frac{(1-m_h)^2}{4}}&\leq C n^{-1/2},
}
so that~\eq{10} is satisfied for some constant~$\k>0$, with~$k = 1$ and~$T=0$ almost surely:
\be{
     \frac{1}{a}\babs{\IP\bklr{{\ts W'-W=2|S}}-\IP\bklr{{\ts W'-W=2}}} \leq \k \sigma_n\left(\frac{\abs{W-\mu_n}}{\sigma_n}+1\right).
}
We next show~$\IE[(R')^2]=\bigo(1)$.
From~\eq{41} and~\eq{39},
\be{
   \sqrt{\IE[(R')^2]}\leq C(1+\abs{\mu_n-n m_h}).
}
To bound~$\abs{\mu_n-n m_h}$ when~$h\not=0$, note that the expectation of~\eq{36} is zero, which,
with \eq{37} and~\eq{38}, implies that
\ba{
   |nm_h-\mu_n| &\leq 
                \frac{\bigl|\sum_{i=1}^n \IE \left(\tanh\left(\frac{\beta}{n} (W-S_i)+ h\right)
                   -\tanh\left(\frac{\beta}{n} W+ h\right)\right)\bigr| 
                             +C n \IE\left(\frac{W}{n}-m_h\right)^2}{|\beta (1-m_h^2)-1|}; 
} 
applying \eq{37} and~\eq{38} yields~$\abs{\mu_n-n m_h}=\bigo(1)$, and 
hence~$\sqrt{\IE[(R')^2]}=\bigo(1)$.

Collecting the results above, it now follows from Corollary~\ref{17}\emph{(i)} that
\be{
\dtv\left(\law\left(\tW\right), 
    \TP\left(\half\mu_n,\quarter\s_n^2\right)\right)=\bigo(\s_n^{-1})=\bigo(n^{-1/2}).
}
For the local limit bound,  we only need to show 
\[
    S_2(\law(\tW))  = \bigo(\s_n^{-2}),
\]
which follows from \cite[Lemma~4.4]{Rollin2015}.
Noting Remark~\ref{rem1}, Corollary~\ref{17}\emph{(ii)} now easily implies that
\be{
\dloc\left(\law\left(\tW\right), 
     \TP\left(\half\mu_n,\quarter\s_n^2\right)\right)=\bigo(\s_n^{-2})=\bigo(n^{-1}).
}
Then $\mu_n$ can be replaced by~$nm_h$ and $\s_n^2$ by $\frac{n(1-m_h^2)}{(1-\beta+\beta m_h^2)}$.
This follows from properties of the translated Poisson distribution, because $|\mu_n - nm_h| = \bigo(1)$, 
$|\s_n^2 - \frac{n(1-m_h^2)}{(1-\beta+\beta m_h^2)}| = \bigo(n^{1/2})$ and $n^{-1}\s_n^2$
is bounded away from~$0$.

\section*{Acknowledgements} 

ADB and NR thank the Institute of Mathematical Sciences and the Department of Statistics and Applied Probability at the National University of Singapore for their kind hospitality. 
AR was supported by NUS Research Grant R-155-000-167-112. ADB is supported in part by
Australian Research Council Discovery Grants DP150101459 and DP150103588.
NR is supported in part by Australian Research Council Discovery Grant DP150101459. We thank the two referees for their comments.

%

\end{document}